\newtheorem{Theorem}{Theorem}[section]
\newtheorem{Definition}[Theorem]{Definition}
\newtheorem{Lemma}[Theorem]{Lemma}
\newtheorem{Remark}[Theorem]{Remark}
\numberwithin{equation}{section}
\begin{document}

\def\rb{\color{red}}
\def\re{\color{black}}
\def\le{\left}
\def\r{\right}
\def\cost{\mbox{const}}
\def\a{\alpha}
\def\d{\delta}
\def\ph{\varphi}
\def\e{\varepsilon}
\def\la{\lambda}
\def\si{\sigma}
\def\La{\Lambda}
\def\B{{\cal B}}
\def\A{{\mathcal A}}
\def\L{{\mathcal L}}
\def\O{{\mathcal O}}
\def\bO{\overline{{\mathcal O}}}
\def\F{{\mathcal F}}
\def\K{{\mathcal K}}
\def\H{{\mathcal H}}
\def\D{{\mathcal D}}
\def\C{{\mathcal C}}
\def\M{{\mathcal M}}
\def\N{{\mathcal N}}
\def\G{{\mathcal G}}
\def\T{{\mathcal T}}
\def\R{{\mathbb R}}
\def\I{{\mathcal I}}

\def\bw{\overline{W}}
\def\phin{\|\varphi\|_{0}}
\def\s0t{\sup_{t \in [0,T]}}
\def\lt{\lim_{t\rightarrow 0}}
\def\iot{\int_{0}^{t}}
\def\ioi{\int_0^{+\infty}}
\def\ds{\displaystyle}

\def\pag{\vfill\eject}
\def\fine{\par\vfill\supereject\end}
\def\acapo{\hfill\break}

\def\beq{\begin{equation}}
\def\eeq{\end{equation}}
\def\barr{\begin{array}}
\def\earr{\end{array}}
\def\vs{\vspace{.1mm}   \\}
\def\rd{\reals\,^{d}}
\def\rn{\reals\,^{n}}
\def\rr{\reals\,^{r}}
\def\bD{\overline{{\mathcal D}}}
\newcommand{\dimo}{\hfill \break {\bf Proof - }}
\newcommand{\nat}{\mathbb N}
\newcommand{\E}{\mathrm E}
\newcommand{\Pro}{\mathbb P}
\newcommand{\com}{{\scriptstyle \circ}}
\newcommand{\reals}{\mathbb R}
\def\Tr{\textnormal{Tr}}
\def\P{\mathrm{P}}

\title{On dynamical systems perturbed by a null-recurrent motion: The general case}

\author[1,3]{Zsolt Pajor-Gyulai \thanks{zsolt@cims.nyu.edu}}
\author[2,3]{Michael Salins \thanks{msalins@bu.edu}}
\affil[1]{Courant Institute of Mathematical Sciences, New York University, New York, NY}
\affil[2]{Department of Mathematics and Statistics, Boston University, Boston, MA}
\affil[3]{Department of Mathematics, University of Maryland, College Park, MD}

\institute{}

\date{}

\maketitle

\begin{abstract}
  We consider a perturbed ordinary differential equation where the perturbation is only significant when a one-dimensional null recurrent diffusion is close to zero. We investigate the first order correction to the unperturbed system and prove a central limit theorem type result, i.e., that the normalized deviation process converges in law in the space of continuous functions to a limit process which we identify. We show that this limit process has a component which only moves when the limit of the null-recurrent fast motion equals zero. The set of these times forms a zero-measure Cantor set and therefore the limiting process cannot be described by a standard SDE. We characterize this process by its infinitesimal generator (with appropriate boundary conditions) and we also characterize the process as the weak solution of an SDE that depends on the local time of the fast motion process.  We also investigate the long time behavior of such a system when the unperturbed motion is trivial. In this case, we show that the long-time limit is constant on a set of full Lebesgue measure with probability 1, but it has nontrivial drift and diffusion components that move only when the fast motion equals zero.
\end{abstract}

\section{Introduction}
In systems of multidimensional diffusion processes where both slow and fast timescales are present, the limiting behavior of the slow component is an interesting mathematical problem with applications in physics, biology and other areas. One possible setting is a system of diffusion processes $(\tilde{X}^{\varepsilon}(t),\tilde{Y}^{\varepsilon}(t))$ that depends on a parameter $\varepsilon$ representing the ratio of the two time scales. More precisely, suppose that $\tilde{X}^{\varepsilon}$ changes faster and faster in time as $\varepsilon\downarrow 0$ while $\tilde{Y}^{\e}$ changes on the same time scale for all values of $\e$. Then $\tilde{X}^{\e}$ and $\tilde{Y}^{\e}$ are called the fast and the slow component respectively.

Classical stochastic averaging theory, first invented by Khasminskii (\cite{K68}) expanding the ideas of Bogolyubov for ordinary differential equations (\cite{B45}), states that if the fast component $X^{\varepsilon}$ has a finite invariant measure, then as $\varepsilon\to 0$, the distribution of the slow component $Y^{\e}$ approaches a limit where the $X$ dependence in the drift and diffusion coefficients of $Y$ is averaged out with respect to this invariant measure. This result was later extended and refined by a vast number of authors (see e.g \cite{FW12},\cite{GS82},\cite{K04},\cite{KY04},\cite{P77},\cite{S89} amongst many others).

In this paper, we are going to study a system where the fast component is a null-recurrent process and therefore has no invariant probability measure while the slow motion is a perturbation of an ordinary differential equation. Namely, let $W$ be a $k$-dimensional Brownian motion and consider the system

\begin{align}
\label{eq:X_eqfast} d\tilde{X}^{\e}(t)&=\e^{-1}\varphi(\tilde{X}^{\e}(t),\tilde{Y}^{\e}(t))dW(t),&\tilde{X}^{\e}(0)=x_0/\e,\\
\label{eq:Y_eqfast} d\tilde{Y}^{\varepsilon}(t)&=b(\tilde{X}^{\e}(t),\tilde{Y}^{\varepsilon}(t))dt+\sigma(\tilde{X}^{\e}(t),\tilde{Y}^{\varepsilon}(t))dW(t),&\tilde{Y}^{\e}(0)=y_0,
\end{align}
where all coefficients are Lipschitz continuous, $\varphi$ is bounded away from zero and infinity, and the process $\tilde{X}^{\e}$ ($\tilde{Y}^{\e}$) is one (d) dimensional. Intuitively, the fast component $\tilde{X}^{\e}(t)$ is of order $\mathcal{O}(\e^{-1})$ and therefore the limiting dynamics of the system, as $\e\downarrow 0$, is governed by the behavior of the coefficients for large absolute value of the $x$-variable.

 In \cite{KK04}, it was shown that if $b$, $\varphi$, and $\sigma$ have certain limits as $|x|\uparrow\infty$ in the Cesaro sense then $(\e\tilde{X}^{\e}(t),\tilde{Y}^{\e}(t))$ converges in law to a process where the $x$-dependence in (\ref{eq:X_eqfast})-(\ref{eq:Y_eqfast}) is replaced by coefficients associated to these limits.  More precisely, after the change of coordinates $(X^{\e}(t),Y^{\e}(t))=(\e\tilde{X}^{\e}(t),\tilde{Y}^{\e}(t))$, the system $(X^{\e}(t),Y^{\e}(t))$ can be written as
\[
d\left(\begin{array}{c}
X^{\e}(t)\\
Y^{\e}(t)
\end{array}\right)=\left(\begin{array}{c}
0\\
b(\e^{-1}X^{\e}(t),Y^{\e}(t))
\end{array}\right)dt+\left(\begin{array}{c}
\varphi(\e^{-1}X^{\e}(t),Y^{\e}(t))\\
\sigma(\e^{-1}X^{\e}(t),Y^{\e}(t))
\end{array}\right)dW(t)
\]
It might be tempting to assume that the limit will be given by taking the limit of the coefficients for large positive and negative values of $x$. However, this is not the complete picture as one has to include the effects of null recurrent averaging. To see this, note that the diffusion matrix can be written as
\[
A(x,y)=\frac{1}{2}\left(\begin{array}{cc}
|\varphi(x,y)|^2 &\varphi\sigma^T(x,y)\\
\sigma\varphi^T(x,y) & \sigma\sigma^T(x,y)
\end{array}\right)=\frac{1}{2}\frac{1}{1/|\varphi(x,y)|^2}\left(\begin{array}{cc}
 1&\frac{\varphi\sigma^T(x,y)}{|\varphi(x,y)|^2}\\
\frac{\sigma\varphi^T(x,y)}{|\varphi|^2(x,y)}& \frac{\sigma\sigma^T(x,y)}{|\varphi^2(x,y)|}
\end{array}\right)
\]
 Let $p(x,y)=1/|\varphi(x,y)|^2$ and note that if we freeze the $y$ dynamics, this is proportional to the sigma finite invariant density of the $X^{\e}$ process. For a function $K(x,y)$, define
\[
K_\pm(x,y) = \chi_{\{x>0\}} \cdot \lim_{u \to +\infty} \frac{1}{u}\int_0^u K(v,y) dv + \chi_{\{x<0\}} \cdot \lim_{u \to -\infty} \frac{1}{u} \int_0^u K(v,y) dv,
\]
whenever the Cesaro-limits exist. Using this, introduce the averaged diffusion matrix $\bar{A}_{ij}(x,y)=(pA_{ij})_{\pm}(x,y)/p_{\pm}(x,y)$ for $i,j=1,..,1+d$ and the averaged drift $\bar{b}(x,y)=(0, (pb)_{\pm}/p_{\pm})$.

\begin{Theorem}[Khasminskii, Krylov \cite{KK04}]
Suppose that $\varphi,\sigma, b$ are Lipschitz continuous and, for each $x$, their first and second order derivatives in $y$ are bounded continuous functions of $y$. Assume as well that for some positive constants $c_1, c_2, c_3>0$, we have
\[
c_1\leq |\varphi(x,y)|\leq c_2,\qquad \sum_{i=1}^{d}(A^T(x,y))_{ii}+b_i^2(x,y))\leq c_3(1+|y|^2)
\]
for all $(x,y)\in\mathbb{R}^{1+d}$. Also assume that $p, pA_{ij}, pb_i$ and all their first and second derivatives in $y$ have well defined Cesaro limits as $x\rightarrow \pm \infty$. Then the pair $(\e \tilde{X}^{\e}(t), \tilde{Y}^{\e}(t))$ converges in distribution in $C\left([0,\infty),\mathbb{R}^{1+d}\right)$ to the solution of
\[
d\left(X^{0}(t),Y^{0}(t)\right)=\bar{b}(X^0(t),Y^{0}(t))dt+\sqrt{\bar{A}(X^0(t),Y^{0}(t))}dW(t),
\]
whenever weak uniqueness for this equation holds.
\end{Theorem}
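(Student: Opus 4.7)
\noindent The plan follows the classical three-step programme for diffusion averaging. Writing $Z^\e=(X^\e,Y^\e)$, I aim to (i) establish tightness of $\{Z^\e\}_{\e>0}$ in $C([0,T],\R^{1+d})$; (ii) identify every subsequential weak limit as a solution of the martingale problem for the averaged generator
\[
\bar{\L}f(x,y) = \bar b(x,y)\cdot\nabla f(x,y) + \mathrm{tr}\bigl(\bar A(x,y)\,\nabla^2 f(x,y)\bigr);
\]
and (iii) invoke the assumed weak uniqueness to upgrade subsequential to full weak convergence. Step (i) is routine: the diffusion coefficients of $Z^\e$ are bounded in the $x$-direction by $c_2$ and grow at most linearly in $y$ by $c_3$, so classical moment estimates on $\sup_{t\le T}|Y^\e(t)|$ combined with Aldous' criterion give tightness.

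For step (ii), It\^o's formula produces, for any smooth $f$, the martingale
\[
M^\e_f(t) := f(Z^\e(t)) - f(Z^\e(0)) - \int_0^t \L^\e f(Z^\e(s))\,ds,
\]
where $\L^\e$ is the generator of $Z^\e$ with coefficients evaluated at the fast variable $\e^{-1}X^\e$. Identifying any weak limit as a solution of the martingale problem for $\bar\L$ reduces to showing that, for each fast-variable coefficient $K\in\{\tfrac{1}{2}|\varphi|^2,\tfrac{1}{2}\varphi\sigma^T,\tfrac{1}{2}\sigma\sigma^T,b\}$,
\[
I^\e_K(t) := \int_0^t\bigl[K(\e^{-1}X^\e(s),Y^\e(s)) - \bar K(X^\e(s),Y^\e(s))\bigr]\,ds \xrightarrow[\e\downarrow 0]{\P} 0.
\]
I would control $I^\e_K(t)$ via the occupation-time formula for the continuous semimartingale $X^\e$, rewriting it as a spatial integral of $K(\e^{-1}a,Y^\e(\cdot)) - \bar K(a,Y^\e(\cdot))$ against the local time $L^\e(t,a)$. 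The Cesàro hypothesis on $pK$ and $p$, combined with moduli-of-continuity estimates for $L^\e(t,\cdot)$ in $a$ (available from the uniform ellipticity $|\varphi|\ge c_1$) and for $Y^\e$ in time, then drives $I^\e_K(t)$ to zero in probability. Equivalently, one may run a perturbed test function argument with Poisson-equation correctors $\chi^K$ solving $\tfrac{1}{2}|\varphi(\xi,y)|^2\partial^2_{\xi\xi}\chi^K(\xi;x,y) = K(\xi,y) - \bar K(x,y)$ in the fast coordinate, though this variant is delicate because $\chi^K$ is unbounded.

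The principal obstacle is precisely that the fast motion is \emph{null-recurrent}: the density $p(x,y)=|\varphi(x,y)|^{-2}$ is only $\sigma$-finite, so there is no invariant probability measure to average against and the standard positive-recurrent averaging machinery cannot be invoked directly. The geometric fact that unlocks the argument is that $X^\e = \e\tilde X^\e$ forces $\tilde X^\e$ to sit at scale $\e^{-1}|X^\e|$, so the averaging is governed entirely by the Cesàro limits of the coefficients at $\pm\infty$ rather than by any global ergodic theorem. Once the martingale problem for $\bar\L$ is identified, the assumed weak uniqueness upgrades the subsequential convergence to weak convergence of the entire family.
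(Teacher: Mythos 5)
This theorem is attributed to and cited from \cite{KK04}; the paper itself offers no proof, but it does reproduce and sketch the key averaging lemma from that reference (Lemma~\ref{lem:conv_of fast_motion}). Your overall programme---tightness, identification of every subsequential limit via the martingale problem for $\bar{\mathcal{L}}$, and an appeal to the assumed weak uniqueness---is the right framework and matches \cite{KK04} at the top level. Two remarks on step (ii).

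First, a precision issue: when you introduce $I^\e_K(t)=\int_0^t [K(\e^{-1}X^\e(s),Y^\e(s))-\bar K(X^\e(s),Y^\e(s))]\,ds$ you should say explicitly that $\bar K=\overline{pK}_{\pm}/\bar p_{\pm}$ with $p=|\varphi|^{-2}$, not the Cesàro limit of $K$ itself. This is not cosmetic: in the occupation-time formula you invoke, $ds$ becomes $p\,d\langle X^\e\rangle_s$, and that factor of $p$ is precisely where the $p$-weighted Cesàro averages enter. Your later mention of ``the Cesàro hypothesis on $pK$ and $p$'' suggests you know this, but the initial display obscures it.

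Second, and more substantively, you describe the corrector alternative---solving $\tfrac12|\varphi(\xi,y)|^2\partial_{\xi\xi}^2\chi^K=K-\bar K$ in the fast variable---as ``delicate because $\chi^K$ is unbounded.'' That concern is genuine for positive-recurrent cell problems, but it misrepresents what Khasminskii and Krylov actually do and misses the idea that makes their argument work. As sketched in the proof of Lemma~\ref{lem:conv_of fast_motion}, they do not pose a cell problem in $\xi$; they introduce an $\e$-dependent corrector $u^\e(t,x,y)$ in the \emph{slow} variable,
\[
|\varphi(\e^{-1}x,y)|^2\,u^\e_{xx}(t,x,y)=f(t,x,y)\bigl(|\varphi(\e^{-1}x,y)|^2-a_\pm(x,y)\bigr),
\qquad u^\e(t,0,y)=u_x^\e(t,0,y)=0,
\]
and the Cesàro hypothesis forces $u^\e_x$ (and, after one more integration, $u^\e$) to converge to zero \emph{uniformly on compacts} in $x$ as $\e\to 0$; this is exactly Lemma~\ref{lem:Cesaro-times-f}. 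Applying It\^o's formula to $u^\e(t,X^\e(t),Y^\e(t))$ then isolates the integral to be controlled as the second-order term, while the boundary values of $u^\e$ and the remaining terms all vanish. There is no unbounded corrector to wrestle with. Your primary occupation-time route is a plausible alternative, but it requires both uniform-in-$\e$ Hölder regularity of $L^\e(t,\cdot)$ in the spatial variable and a freezing argument to decouple the $Y^\e$-dependence from the occupation measure of $X^\e$; you supply neither, and neither is simpler than the corrector estimate you set aside. The high-level diagnosis in your final paragraph (no invariant probability measure; coefficients only seen through their Cesàro behaviour at infinity) is correct.
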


 In general, however, the Ces\'aro limits in question can be quite different for large positive and negative values of $x$. This implies that the coefficients of this limiting system will be discontinuous at $x=0$ in the generic situation and the well-posedness of the corresponding martingale problem becomes a challenge. Note that the limit being well defined is an assumption in this theorem. For a list of special cases when well-posedness holds, we refer the reader to \cite{KK04} (see also \cite{BP87}).

In this paper,  we answer the question what happens when the Cesaro limits of $\sigma$ are zero while the Cesaro limits of $b$ are equal. Namely, we study the following special case of (\ref{eq:X_eqfast})-(\ref{eq:Y_eqfast}).\re
\begin{align}
\label{eq:X_eqfastagain} d\tilde{X}^{\e}(t)&=\e^{-1}\varphi(\tilde{X}^{\e}(t),\tilde{Y}^{\e}(t))dW(t),&\tilde{X}^{\e}(0)=x_0/\e,\\
\label{eq:Y_eqfastagain} d\tilde{Y}^{\varepsilon}(t)&=[b_1(\tilde{Y}^{\varepsilon}(t))+b_2(\tilde{X}^{\e}(t),\tilde{Y}^{\varepsilon}(t))]dt+\sigma(\tilde{X}^{\e}(t),\tilde{Y}^{\varepsilon}(t))dW(t),&\tilde{Y}^{\e}(0)=y_0,
\end{align}
where $b_2$ and $\sigma$ satisfy a certain decay condition in $x$ which guarantee that their Ces\'aro limits for large $|x|$ vanish so their contribution to the dynamics for large values of $|\tilde{X}^{\e}|$ is negligible. In this setting, the result of \cite{KK04} implies that $Y^{\e}(t)$ converges uniformly in probability to the solution $y(t)$ of the ordinary differential equation (ODE)
\[
\dot{y}(t)=b_1(y(t)),\qquad y(0)=y_0.
\]
In fact, it can be shown that this convergence holds in a stronger sense (see Lemma \ref{lem:Y_close_to_y}). Consequently, $Y^{\e}(t)$ can be viewed as a perturbation of $y(t)$ and it is of interest to describe the behavior of their deviation. This system can model, for example, the effect of spatial inhomogenity localized around a one co-dimensional surface (e.g. a crack in a solid) where the homogeneous structures away from the interface can possibly be different on different sides of this interface. This is the situation, for example, in groundwater hydrology when one studies diffusion in composite porous materials, see e.g. \cite{LQFG2000} and the references therein. In our scaling, the dynamics transversal to the interface is much faster than the one parallel to it.

In \cite{PGYS15}, the authors considered the simpler case when $\tilde{X}^{\e}$ is a Brownian motion independent of the noise driving the slow motion. Namely, the stochastic differential equation (SDE)
\[
d\tilde{Y}^{\e}(t)=[b_1(Y^{\e}(t))+b_2(\e^{-1}W_1(t),Y^{\e})]dt+\sigma(\e^{-1}W_1(t),Y^{\e}(t))dW_2(t),\qquad Y^{\e}(0)=y_0,
\]
where $W_1$ and $W_2$ are independent standard Brownian motions, was considered and it was shown that the first order correction to the deterministic limit is of order $\sqrt{\e}$. Moreover, a functional central limit theorem was proved for the normalized difference $\e^{-1/2}(Y^{\e}(t)-y(t))$ with the limit identified as a process that is Gaussian when conditioned on $W_1$. This case was significantly simpler than (\ref{eq:X_eqfastagain})-(\ref{eq:Y_eqfastagain}) as the difficulty arising from the discontinuity at $x=0$ does not appear and there is no feedback from the $Y^{\e}$ process to $X^{\e}$. This allowed for a proof using only elementary stochastic analysis. In the limit, the perturbation acts only when $W_1(t)=0$ (i.e., on a Cantor set of times) and the equation for the limiting solution can be expressed in terms of the local time of $W_1$ at zero.

In the current paper, we prove a similar result for the general system (\ref{eq:X_eqfastagain})-(\ref{eq:Y_eqfastagain}) using the  martingale method. We show that if $\sigma\neq 0$, the pair $(\e\tilde{X}^{\e}(t),\e^{-1/2}(\tilde{Y}^{\e}(t)-y(t)))$ converges to a non-trivial process with singular diffusion on the interface which we describe. On the other hand if $\sigma\equiv 0$, then the pair $(\e\tilde{X}^\e(t), \e^{-1}(\tilde{Y}^\e(t) - y(t)))$ converges to a nontrivial process with singular drift on the interface. Moreover, we show that in the absence of the unperturbed system ($b_1\equiv 0$), the interesting limit behavior happens on longer time scales (of order $\e^{-2}$). The long-time limit has both singular drift and singular diffusion in the sense that there is a proper diffusion process on the interface, with coefficients that are averages with respect to a sigma-finite invariant measure of the fast motion, that is sampled at the random times when the limit of the fast motion is in the origin. Once again, these times form a Cantor set and the limiting slow motion is constant for almost all times.

 It is worth pointing out that in some sense, the original result of \cite{KK04} and the results in this paper parallel the first few steps of the program contained in (\cite{FW12}, Chapter 7) when the fast process is positively recurrent (See Section 7). 

The plan of this paper is as follows. In Section \ref{sec:assump_main}, we specify the conditions on the coefficients in  (\ref{eq:X_eqfastagain})-(\ref{eq:Y_eqfastagain}), describe the limit processes, and state our convergence results. In Section \ref{sec:aux_lemma}, we derive the main technical tools needed for our analysis. In Section \ref{sec:Tightness}, we show that the families of processes under consideration are tight while in Section \ref{sec:martingale-conv}, we prove that all subsequential limits converge to the solution of the associated martingale problem. In Section \ref{sec:uniqmarting}, we prove that these martingale problems are well-posed and characterize their unique solutions as the weak solutions of certain SDE-s. In Section \ref{rem:further}, we mention some open problems.

\subsubsection*{Acknowledgements}

{\small
The authors are grateful to D.\ Dolgopyat for introducing them to the problem and to L. Koralov and D. Dolgopyat for their helpful suggestions during invaluable discussions and for reading the manuscript. We also thank P.E. Jabin for a discussion on Section \ref{rem:further}. While working on the paper, Z.\ Pajor-Gyulai was partially supported by the NSF grants number 1309084 and DMS1101635. M. Salins was partially supported by the NSF grant number 1407615.
}


\section{Assumptions and main results}\label{sec:assump_main}
For convenience, we will make the change of coordinates $(X^{\e}(t),Y^{\e}(t))=(\e\tilde{X}^{\e}(t),\tilde{Y}^{\e}(t))$ and consider the system
\begin{align}
\label{eq:X_eq} dX^{\e}(t)&=\varphi(\e^{-1}X^{\e}(t),Y^{\e}(t))dW(t),& X^{\e}(0)=x_0,\\
\label{eq:Y_eq} dY^{\varepsilon}(t)&=[b_1(Y^{\varepsilon}(t))+b_2(\e^{-1}X^{\e}(t),Y^{\varepsilon}(t))]dt+\sigma(\e^{-1}X^{\e}(t),Y^{\varepsilon}(t))dW(t),& Y^{\e}(0)=y_0,
\end{align}
where $W(t)$ is a $k$-dimensional Wiener process, $\varphi(x,y)$ is a $k$-dimensional vector field, $b_1(y)$ and $b_2(x,y)$ are $d$-dimensional vector fields, and $\sigma(x,y)$ is a $d\times k$-matrix. In this way, $X^\e(t)$ is a one-dimensional process and $Y^\e(t)$ is a $d$-dimensional process.

We make the following assumptions:
\begin{enumerate}
\item $\varphi$, $b_2$, and $\sigma$ are Lipschitz continuous and $b_1(y)$ is a twice continuously differentiable $d$-dimensional vector field with bounded derivatives.
\item Decay conditions hold in $x$:
\begin{equation}\label{eq:decay_cond}
\hat{b}(x):=\sup_{y\in\mathbb{R^d}}|b_2(x,y)|_{\reals^d}\in L^1(\mathbb{R}),\qquad \hat{\sigma}^2(x)=\sup_{y\in\mathbb{R^d}}\Tr\sigma\sigma^T(x,y)=\sup_{y\in\mathbb{R^d}}\sum_{i,j=1}^d\sigma_{ij}^2(x,y) \in L^1(\mathbb{R}).
\end{equation}
\item $X^{\e}$ is non-degenerate. More precisely, if $|\varphi(x,y)|^2=\sum_{i=1}^{k}\varphi_i^2(x,y)$, we require that there are numbers $0<c_1\leq c_2<\infty$ such that
\[
c_1<|\varphi(x,y)|^2<c_2\qquad\forall (x,y)\in\mathbb{R}^{1+d}
\]
\item $1/|\varphi(x,y)|^2$ has Cesaro-limits in $x$, i.e. the limits
\[
\lim_{x\to\pm\infty}\frac{1}{x}\int_0^x\frac{1}{|\varphi(u,y)|^2}du
\]
exist. Define
\begin{equation}\label{eq:apm}
a_{\pm}(x,y)=\chi_{\{x\geq 0\}}\left(\lim_{u\to\infty}\frac{1}{u}\int_0^u\frac{1}{|\varphi(v,y)|^2}dv\right)^{-1}+\chi_{\{x<0\}}\left(\lim_{u\to -\infty}\frac{1}{u}\int_0^u\frac{1}{|\varphi(v,y)|^2}dv\right)^{-1}.
\end{equation}
 Occasionally, we will use the notation $a_{+}(y)$ to denote the restriction of $a_{\pm}$ to $\{(x,y) \in \mathbb{R}^{d+1} : x>0\}$ (similarly we will use $a_-(y)$).
\end{enumerate}

It follows from Assumption 1 that the ODE
\[
\frac{dy}{dt}=b_1(y(t))\qquad y(0)=y_0,
\]
which serves as the unperturbed part of the slow motion, has a unique globally defined solution.

Before stating our main result, let us introduce the construction which we use to describe the limit processes. Let $H_+=\{(x,y): x>0\}$ and $H_{-}=\{(x,y): x<0\}$ be two half-spaces in $\mathbb{R}^{1+d}$ and let $\mathcal{L}_{\pm}$ be second order differential operators acting on functions over $H_{\pm}$ respectively. Denote the interface by $H_0=\{(x,y): x=0\}$. Also let $\beta_i(y), \alpha_{ij}(y) \in C(\mathbb{R}^d)$, $i,j = 1,...,d$, be such that $(\alpha_{ij}(y))$ is uniformly elliptic and such that the SDE
\[
dU(t)=\beta(U(t))dt+\sqrt{\alpha(U(t))}dW(t)
\]
has a unique weak solution. Let $p_{\pm}(y)\geq 0$ with $p_{+}(y) + p_-(y) = 1$.  Let $C_0(\mathbb{R}^{1+d})$ be the Banach space of bounded, continuous functions $f:\mathbb{R}^{1+d}\rightarrow \mathbb{R}$ decaying at infinity endowed with the supremum norm.

We construct an operator $\bar{\mathcal{L}}$ on $C_0(\mathbb{R}^{1+d})$ as follows. Let its domain $\mathcal{D}(\bar{\mathcal{L}})\subseteq C_0(\mathbb{R}^{1+d})$ consist of $f\in C_0(\mathbb{R}^{1+d})$ such that
\begin{itemize}
\item The restriction of $f$ to $H_+$, $H_-$, and along $H_0$ are smooth.
\item $\partial_{y_i}f(x,y)\in C^1(\mathbb{R}^{1+d})$ for $i=1,...,d$.
\item $\mathcal{L}_+f(0+,y)=\mathcal{L}_-f(0-,y)$, for all $y\in\mathbb{R}^d$.
\item $\partial_xf(x,y)$ has right and left limits as $x \to 0$ and the gluing-boundary condition
\begin{equation}\label{eq:glueing_cond}
p_+(y)\partial_xf(0+,y)-p_-(y)\partial_xf(0-,y)+\sum_{i=1}^d\beta_i(y)\partial_{y_i}f(0,y)+\frac{1}{2}\sum_{i,j=1}^d\alpha_{ij}(y)\partial_{y_iy_j}f(0,y)=0
\end{equation}
is satisfied for all $(x,y)\in\mathbb{R}^{1+d}$.
\end{itemize}
For any $f\in\mathcal{D}(\bar{\mathcal{L}})$, let $\bar{\mathcal{L}}f(x,y)=\mathcal{L}_{\pm}f(x,y)$ for $x\in H_{\pm}$ and define $\bar{\mathcal{L}}f(0,y)$ by continuity.

This is the adaptation of a special case of the well-known Wentzell type boundary conditions (see \cite{W59a}, \cite{W59b} or the survey \cite{U67}). In general, it is hard to prove that the closure of such an operator is the generator of a continuous strong Markov process $Z(t)=(X(t),Y(t))$. Nevertheless, we will show that when $p_+(y)\equiv p_-(y)\equiv 1/2$ and $\mathcal{L}_{\pm}$ has a special form, $\bar{\mathcal{L}}$ does indeed generate a Markov semigroup on $C_0(\mathbb{R}^{1+d})$. In fact, in the cases of interest we construct the corresponding Markov process probabilistically. However, we believe that the closure of such an operator $\bar{\mathcal{L}}$ is the generator of a Markov semigroup in greater generality.

Intuitively, the process $Z(t)$ can be described as follows. Inside the half-spaces $H_+$ and $H_-$, it coincides with the diffusion generated by $\mathcal{L}_{\pm}$ up to the point when it hits $H_0$. On $H_0$, $Z(t)$ follows a diffusion process for an infinitesimal amount of time, after which it leaves $H_0$ in the direction of $H_{\pm}$ with probability $p_{\pm}$. The set of times at which $Z(t)\in H_0$ has Lebesgue measure zero and consequently the displacement due to the effect of the boundary is singular in time (the support of the increments is a Cantor set in $\mathbb{R}$). To make this intuition rigorous and obtain a characterization of $Z(t)$, let us recall the definition of the local time of a semimartingale (see Theorem 7.1 in \cite{KS}). 
\begin{Definition}\label{def_local_time}
The symmetric local time of a semimartingale $S$ with quadratic variation $\langle S\rangle$ is the unique nonnegative random field
\[
L=\{L^S(t,x); (t,x)\in[0,\infty)\times \mathbb{R}\}
\]
such that the following hold:
\begin{enumerate}
\item The mapping $(t,x)\to L^S(t,x)$ is measurable and $L^S(t,x)$ is adapted.
\item For each $x\in\mathbb{R}$, the mapping $t\to L^S(t,x)$ is non-decreasing and constant on each open interval where $S(t)\neq x$.
\item For every Borel measurable $f:\mathbb{R}\to [0,\infty)$, we have
\[
\int_0^tf(S_s)d\langle S\rangle_s=\int\limits_{\mathbb{R}}f(x)L^S(t,x)dx\qquad a.s.
\]
\item $L^S(t,x)$ is a.s. jointly continuous in $t$ and $x$ for $x\neq 0$, while the one sided limits exist at $x=0$ and
\begin{equation}\label{eq:sym_loc_time}
L^S(t,0)=\frac{1}{2}(L^S(t,0+)+L^S(t,0-))
\end{equation}
\end{enumerate}
\end{Definition}

Note that instead of (\ref{eq:sym_loc_time}), we could have choosen an arbitrary convex combination. Other common choices include $L^S(t,0)=L^S(t,0+)$ (right local time) or $L^S(t,0)=L^S(t,0-)$ (left local time). For our current purposes, however, the symmetric local time satisfying (\ref{eq:sym_loc_time}) is the most suitable as it makes our formulas more transparent.

Assume that $Z(t)$ is the unique solution of the martingale problem given $\bar{\mathcal{L}}$ for appropriate $\beta$, $\alpha$ and $p_{\pm}$. It is natural to conjecture (see also e.g. \cite{HM11},\cite{L}) that it is also the unique weak solution of an SDE involving singular terms on the interface $x=0$. Namely, let
\begin{equation} \label{eq:mathcal-L-pm}
\mathcal{L}_{\pm}=\frac{1}{2}A(x,y)\partial_{xx}+B(x,y)\partial_y
\end{equation}
be the diffusion operator on $H_-\cup H_+$,\re where $\partial_y$ is the $d$-dimensional gradient with respect to the $y$ variable and $A(x,y)$, $B(x,y)$ are matrix-valued functions that are uniformly elliptic and sufficiently regular on both halfspaces $H_i$. Assume $A(x,y)$ has a jump discontinuity at $x=0$ ($B(x,y)$ may or may not be continuous at $x=0$). Then $Z(t)$ is a weak-solution to the following SDE
\begin{align}
\label{eq:SDE_X}dZ_1(t)=\sqrt{A(Z_1(t),Z_2(t))}dW(t)+[p_+(Z_2(t))-p_-(Z_2(t))]dL^{Z_1}(t),\qquad Z_1(0)=x_0,\\
\label{eq:SDE_Y}dZ_2(t)=B(Z_1(t),Z_2(t))dt+\beta(Z_2(t)) dL^{Z_1}(t)+\sqrt{\alpha(Z_2(t))}dV^{Z_1}(t),\qquad Z_2(0)=y_0,
\end{align}
where $\sqrt{.}$ is the matrix square root, $L^{Z_1}(t)=L^{Z_1}(t,0)$ is the local time of $Z_1$ at $0$, $W$ is a one-dimensional Brownian motion. $V^{Z_1}(t)$ is a continuous martingale whose quadratic variation is $\langle V^{Z_1}_i, V^{Z_1}_j\rangle_t = \delta_{ij} L^{Z_1}(t)$  and the cross-variations are $\langle V_i^{Z_1},W\rangle_t\equiv 0$ for all $i=1,...,d$. It is natural to define  the weak solution to such an SDE as a quadruple $(Z_1,Z_2,W,V^{Z_1})$ such that $W$ is a one-dimensional Brownian motion and $V^{Z_1}$ is a continuous martingale satisfying the above variation relations and that $(Z_1,Z_2)$  solves (\ref{eq:SDE_X})-(\ref{eq:SDE_Y}) in the integral sense with $(W,V^{Z_1})$ taken as the driving noise. \color{black} In this way, $Z_1$ diffuses like a skew-Brownian motion  (see \cite{L})  modulated by $Z_2(t)$, and $Z_2(t) - \int_0^t B(Z_1(s),Z_2(s))ds$ moves only when $Z_1(t)=0$, because $L^{Z_1}(t)$ (and therefore $V^{Z_1}(t)$) is constant when $Z_1\not=0$.

Because of the discontinuous coefficients and the coupled nature of the above equations, the well-posedness of such an SDE is in general unclear. We will show, just as one would expect based on the classical situation, that the weak solutions of (\ref{eq:SDE_X})-(\ref{eq:SDE_Y}) are in one to one correspondence with the solutions of the martingale problem associated to the operator $\bar{\mathcal{L}}$ as discussed above under fairly general circumstances. However, we do not need this general result for the proofs of our main results and therefore defer the proof to the appendix.

We are ready to state our main result on the asymptotic behavior of the process ($X^{\e},Y^{\e}$). As we mentioned before, $Y^{\e}(t)$ converges to $y(t)$. Our first main result concerns the rate of this convergence. Let $C([0,\infty),\mathbb{R}^k)$ be the space of all continuous $\mathbb{R}^k$-valued functions on the halfline endowed with the metric
\[
\rho(f,g)=\sum_{n=1}^{\infty}\frac{1}{2^n}\frac{\sup_{t\in[0,n]}|f(t)-g(t)|}{1+\sup_{t\in[0,n]}|f(t)-g(t)|}
\]
It is well known that weak convergence of probability measures over this space is equivalent to weak convergence of measures in $C([0,T],\mathbb{R}^k)$ with the supremum norm for all $T>0$. Let us introduce the normalized deviation process $\zeta^{\e}(t)=\e^{-1/2}(Y^{\e}(t)-y(t))$.

\begin{Theorem}\label{main_result2}
The triple $(X^{\e}(t),y(t),\zeta^{\e}(t))$ converges in law in $C([0,\infty),\mathbb{R}^{1+2d})$ to a Markov process which is the unique solution of the martingale problem associated to the operator $\bar{\mathcal{L}}$ described above with
\[
\mathcal{L}_{\pm}=\frac{1}{2}a_{\pm}(x,y)\frac{\partial^2}{\partial x^2}+b_1(y)\cdot\frac{\partial}{\partial y}+\left(\partial_y{b_1}(x,y)\cdot \zeta\right) \cdot \frac{\partial}{\partial{\zeta}}
\]
 where $a_{\pm}$ is as in (\ref{eq:apm})  and
\[
p_{\pm}(y)\equiv 1/2,\qquad \beta(y)=0,\qquad \alpha(y)= \begin{pmatrix} 0 & 0 \\ 0 & \int_{-\infty}^{\infty}\left(\frac{\sigma\sigma^T}{|\varphi|^2}\right)(x,y)dx \end{pmatrix}.
\]
Moreover, there is a one dimensional Brownian motion $W$ and a continuous martingale $V^{X^0}$ such that $\langle V_i^{X^0}, V_j^{X_0}\rangle_t= \delta_{ij} L^{X^0}$ and $\langle V_i,W\rangle=0$ for each $i=1,...,d$, and such that the  limit process of the pair $(X^{\e},\zeta^{\e})$ satisfies the inhomogeneous equations
\begin{align}\label{eq:the_solution}
dX^0(t)&=\sqrt{a_{\pm}(X^0(t),y(t))}dW(t),\\
  \label{eq:the_solution2} d\zeta^0(t)&=\partial_y b_1(X^0(t),y(t))\cdot \zeta^0(t)dt+\sqrt{\left(\int_{-\infty}^\infty\left(\frac{ \sigma\sigma^T}{|\varphi|^2}\right)(x,y(t))dx\right)} dV^{X^0}(t),
\end{align}
with $(X^0(0), \zeta^0(0))=(x_0,0)$ where $\sqrt{\cdot}$ in (\ref{eq:the_solution2}) denotes the matrix square root,
 and $\partial_y b_1(y)$ is the derivative tensor of the vector field $b_1$ at $y\in\mathbb{R}^d$, i.e $(\partial_yb_1(y))_{ij}=\partial(b_1)^{i}/\partial y^j$.
\end{Theorem}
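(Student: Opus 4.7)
The plan is to follow the classical tightness plus identification of limits paradigm for martingale problems, mirroring Sections \ref{sec:Tightness}--\ref{sec:uniqmarting}. For tightness, $X^\e$ is a martingale with bounded diffusion coefficient by Assumption 3, so Kolmogorov's criterion applies directly. For $\zeta^\e$, the drift $\e^{-1/2}\int_0^t[b_1(Y^\e)-b_1(y)]ds$ is controlled by Lipschitz continuity of $b_1$ together with the preliminary estimate $Y^\e - y = O(\e^{1/2})$ from Lemma \ref{lem:Y_close_to_y}, while the martingale part has quadratic variation $\e^{-1}\int_0^t(\sigma\sigma^T)(\e^{-1}X^\e, Y^\e)\,ds$. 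The key auxiliary fact, which I expect Section \ref{sec:aux_lemma} to supply, is that $\e^{-1}\int_0^t g(\e^{-1}X^\e(s))\,ds$ stays bounded in $L^p$ uniformly in $\e$ for any $g \in L^1(\R)$; this follows from the occupation time formula for $X^\e$ combined with moment estimates for $L^{X^\e}(t,\cdot)$ near $0$. Together with the $L^1$ decay of $\hat{b}$ and $\hat{\sigma}^2$ in Assumption 2, this yields tightness.

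The central step is showing that every subsequential limit solves the martingale problem for $\bar{\mathcal{L}}$. For $f \in \mathcal{D}(\bar{\mathcal{L}})$, apply It\^o's formula to $f(X^\e(t), y(t), \zeta^\e(t))$ to obtain a martingale plus $\int_0^t \mathcal{G}^\e f(X^\e, y, \zeta^\e) ds$. Inside each half-space $H_\pm$, the term $\tfrac12|\varphi(\e^{-1}X^\e,\cdot)|^2\partial_{xx}f$ averages to $\tfrac12 a_\pm\partial_{xx}f$ by the null-recurrent ergodic theorem embodied in Assumption 4; the $b_1$-linearization $b_1(Y^\e)-b_1(y)\approx \e^{1/2}\partial_y b_1(y)\zeta^\e$ produces the $(\partial_y b_1 \cdot \zeta)\cdot\partial_\zeta$ drift; and the $\e^{-1/2}b_2$ contribution to $\partial_\zeta f$ vanishes in probability because $\hat b\in L^1$. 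The singular second-order pieces $\tfrac12\e^{-1}(\sigma\sigma^T)\partial_{\zeta\zeta}f$ and the mixed $\e^{-1/2}\sigma\varphi^T\partial_{x\zeta}f$ are essentially supported on $\{X^\e\approx 0\}$; using the occupation time-local time correspondence, they converge to $\tfrac12\alpha(y)\partial_{\zeta\zeta}f(0,y,\zeta)\,dL^{X^0}(s)$, which is precisely the $\alpha$-piece of the gluing condition \eqref{eq:glueing_cond} with $p_\pm\equiv 1/2$ and $\beta=0$ (the drift $\beta$ vanishes because $b_2$ is integrable and $X^\e$ has no systematic drift).

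The main obstacle is justifying this boundary limit rigorously: one must simultaneously control the jump of $\partial_x f$ across $x=0$ by a Tanaka-type correction (producing the $p_+\partial_x f(0+)-p_-\partial_x f(0-)$ piece of \eqref{eq:glueing_cond}) and establish convergence of the rescaled occupation measure of $\e^{-1}X^\e$ to the local time of the limit $X^0$, uniformly over test functions in $\mathcal{D}(\bar{\mathcal{L}})$. Uniqueness of the martingale problem (Section \ref{sec:uniqmarting}) then follows by probabilistic construction: because $p_+=p_-=1/2$ and $\beta=0$, the $X^0$ component is simply the diffusion $dX^0=\sqrt{a_\pm(X^0,y)}dW$ with unbiased passage across $\{x=0\}$, and $\zeta^0$ between excursions solves the linear ODE $\dot\zeta^0=\partial_y b_1(X^0,y)\zeta^0$, perturbed by an additive martingale whose increments concentrate on $\{X^0=0\}$ with covariation density $\alpha(y)$ with respect to $L^{X^0}$. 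Finally, \eqref{eq:the_solution}--\eqref{eq:the_solution2} is recovered by invoking the martingale representation theorem to produce $W$ and $V^{X^0}$, with the orthogonality $\langle V_i^{X^0},W\rangle\equiv 0$ reflecting the decomposition of the Brownian noise into the $\varphi$-direction (which drives $X^0$) and the orthogonal complement (which carries the part of $\sigma\,dW$ not aligned with $\varphi$).
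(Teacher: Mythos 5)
Your tightness step and your uniqueness step track the paper closely: tightness uses exactly the occupation-time estimate $\E|\e^{-1}\int g(\e^{-1}X^\e)|^p \lesssim |g|_{L^1}^p$ (Lemma \ref{lem:mylemma}) plus Kolmogorov, and well-posedness of the martingale problem is established just as you describe, by an explicit probabilistic construction (time change of a Brownian motion, variation of parameters for $\zeta^0$) followed by Ethier--Kurtz. So those parts are essentially the paper's argument.

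The middle step — identifying subsequential limits as solutions of the martingale problem — is where you diverge, and where there is a genuine gap. You propose to apply It\^o directly to $f(X^\e,y,\zeta^\e)$ for $f\in\mathcal{D}(\bar{\mathcal{L}})$ and to show that the singular pieces $\tfrac12\e^{-1}(\sigma\sigma^T)\partial_{\zeta\zeta}f$, the mixed $\e^{-1/2}\sigma\varphi^T\partial_{x\zeta}f$, and a Tanaka correction for the jump of $\partial_xf$ at $x=0$ collectively organize into the gluing condition \eqref{eq:glueing_cond} multiplied by a local-time term. You flag yourself that this requires proving that the rescaled occupation measure of $\e^{-1}X^\e$ converges to $L^{X^0}(\cdot,0)$ jointly with the limit $(X^0,\zeta^0)$, uniformly enough to cancel the $O(\e^{-1})$ blow-up against the boundary condition on $f$. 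That is the whole difficulty, and you do not supply an argument for it. The paper avoids this problem entirely: instead of passing the singular integrands to the limit, it uses the Freidlin--Wentzell excursion scheme of Theorem \ref{thm:F-W-1993}. The interval $[0,\infty)$ is split into downcrossings (from $|X^\e|=\delta(\e)$ to $|X^\e|=l(\e)$, during which $Z^\e$ stays in one half-space and It\^o plus the Cesaro-averaging Lemma \ref{lem:conv_of fast_motion} suffice — that is Lemma \ref{lem:devcase1}) and upcrossings (from $l(\e)$ back to $\delta(\e)$). During upcrossings the time integral of $\lambda f-\bar{\mathcal{L}}f$ is shown to be negligible (Condition 3 / Lemma \ref{lem:longtimeconv3}-type bound), so one never has to control the singular integrand near the interface; instead one Taylor-expands $f(Z^\e(\theta^\delta))-f(z)$ and computes the conditional moments of the exit increment $\zeta^\e(\theta^\delta)-\xi$ (Lemma \ref{lem:devcase5}), which is exactly what produces $p_\pm=1/2$, $\beta=0$, $\alpha$, and cancels against \eqref{eq:glueing_cond}. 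So the paper encodes the boundary behavior in exit-time statistics rather than in convergence of occupation measures; your route would need a separate, nontrivial uniform convergence result to close the argument, and as written the proposal is incomplete at that step.

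One smaller inaccuracy: you attribute $\beta\equiv0$ to ``$b_2$ integrable and $X^\e$ has no systematic drift.'' The actual reason in this theorem is the $\e^{-1/2}$ normalization: the $b_2$ contribution to $\zeta^\e$ is $\e^{-1/2}\int_0^t b_2(\e^{-1}X^\e,\cdot)\,ds=O(\e^{1/2})$ by Lemma \ref{lem:mylemma}, hence vanishes. In Theorem \ref{main_result1}, with $\sigma\equiv0$ and the stronger $\e^{-1}$ normalization, the same $b_2$ produces a nonzero $\beta$.
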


\begin{Remark}\label{rem:varpar1}
Note that once the processes $X^0$ and $y$ are constructed, the solution of (\ref{eq:the_solution2}) is explicitly given by the variation of parameters formula
\[
\zeta^0(t)=\int_0^te^{\int_s^t (\partial_yb_1(X^0(r),y(r))dr}\sqrt{\left( \int_{-\infty}^\infty \left(\frac{\sigma\sigma^T}{|\varphi|^2}\right)(x,y(s)) dx \right)} dV^{X^0}(s).
\]
\end{Remark}

As is apparent from this theorem, the drift part of the perturbation ($b_2$) does not contribute to the deviations of order $\sqrt{\varepsilon}$. On the other hand, the following result suggests that $b_2$ does play a role in fluctuations of order $\varepsilon$. 

\begin{Theorem}\label{main_result1}
Let $\sigma\equiv 0$ and $\bar{\zeta}^{\e}(t)=\e^{-1}(Y^{\e}(t)-y(t))$. The triple $(X^{\e}(t),y(t),\bar{\zeta}^{\e}(t))$ converges in law in $C([0,\infty),\mathbb{R}^{1+{2}d})$ to the unique solution of the martingale problem associated to the operator $\bar{\mathcal{L}}$ described above with
\[
\mathcal{L}_{\pm}=\frac{1}{2}a_{\pm}(x,y)\frac{\partial^2}{\partial x^2}+b_1(y)\cdot\frac{\partial}{\partial y}+\left(\partial_y b_1(x,y)\cdot \zeta\right) \cdot\frac{\partial}{\partial{\zeta}}
\]
and
\[
p_{\pm}(y)\equiv 1/2,\qquad \beta(y)= \left(0,\int_{-\infty}^{\infty}\left(\frac{b_2}{|\varphi|^2}\right)(x,y)dx\right),\qquad \alpha(y)\equiv 0.
\]
 Moreover, there is a Brownian motion $W$ such that the limit process of the pair $(X^{\e},\zeta^{\e})$ solves the inhomogeneous equation
\begin{align*}
dX^0(t)&=\sqrt{a_{\pm}(X^0(s),y(s))}dW(t)& X^0(0)=x_0,\\
d{\zeta}^0(t)&=\partial_y{b_1}(X^0(t),y(t))\cdot{\zeta}^0(t)dt+\left(\int_{-\infty}^\infty\left(\frac{b_2}{|\varphi|^2}\right)(x,y(t))dx\right) dL^{X^0}(t) &
  {\zeta}^0(0)=0,
\end{align*}
where $\sqrt{\cdot}$ in (\ref{eq:the_solution2}) denotes the matrix square root,
 and $\partial_y b_1(y)$ is the derivative tensor of the vector field $b_1$ at $y\in\mathbb{R}^d$, i.e $(\partial_yb_1(y))_{ij}=\partial(b_1)^{i}/\partial y^j$.
\end{Theorem}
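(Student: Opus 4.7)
The proof plan is to follow the three-stage framework used for Theorem \ref{main_result2}: (i) establish tightness of $(X^\e, y, \bar\zeta^\e)$ in $C([0,\infty),\mathbb{R}^{1+2d})$, (ii) identify every subsequential limit as a solution of the martingale problem for $\bar{\mathcal{L}}$, and (iii) prove uniqueness for that martingale problem. The essential difference from Theorem \ref{main_result2} is that with $\sigma\equiv 0$ there is no Brownian input to $Y^\e$, so the fluctuations of $Y^\e$ around $y$ are of order $\e$ and arise entirely from the drift perturbation $b_2$; in the limit this generates a singular drift (rather than a singular diffusion) on the interface $\{x=0\}$, which corresponds exactly to $\alpha\equiv 0$ and $\beta(y)=(0,\int(b_2/|\varphi|^2)(v,y)\,dv)$ in the Wentzell-type boundary condition.

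For tightness I would decompose
\[
\bar\zeta^\e(t) = \int_0^t\frac{b_1(Y^\e(s))-b_1(y(s))}{\e}\,ds + \e^{-1}\int_0^t b_2(\e^{-1}X^\e(s),Y^\e(s))\,ds.
\]
The first summand is bounded pathwise by $\mathrm{Lip}(b_1)\int_0^t|\bar\zeta^\e(s)|\,ds$. For the second, after replacing $Y^\e$ by $y$ at the cost of a Gronwall term (justified by Lemma \ref{lem:Y_close_to_y} and the Lipschitz continuity of $b_2$), the occupation times formula applied to the semimartingale $X^\e$ (with $d\langle X^\e\rangle=|\varphi|^2\,ds$), combined with the change of variables $u=\e v$, yields a representation of the form
\[
\int_{\mathbb{R}} \frac{b_2(v,y)}{|\varphi(v,y)|^2}L^{X^\e}(t,\e v)\,dv,
\]
which is uniformly bounded in $\e$ by the $L^1$-integrability of $b_2/|\varphi|^2$ (supplied by \eqref{eq:decay_cond} together with the non-degeneracy of $\varphi$) and the uniform local-time bounds from Section \ref{sec:aux_lemma}. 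Combined with standard moment and increment estimates for the martingale $X^\e$, this delivers tightness of the triple and, via Gronwall, uniform moment bounds on $\bar\zeta^\e$.

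For the martingale identification I would take $f\in\mathcal{D}(\bar{\mathcal{L}})$ and apply the generalized It\^o-Tanaka formula, legitimate because $\partial_xf$ is piecewise $C^1$ with a possible jump across $x=0$, to $f(X^\e(t),y(t),\bar\zeta^\e(t))$. The Cesaro discrepancy $\tfrac12(|\varphi|^2-a_\pm)\partial_{xx}f$ is absorbed by introducing an $O(\e)$ corrector $\e g(\e^{-1}X^\e,\cdot)$ solving a Poisson equation in the fast variable, exactly as in the proof of Theorem \ref{main_result2}. What then remains, in addition to $\int_0^t\mathcal{L}_\pm f\,ds$ and a martingale piece, are the two singular contributions
\[
\int_0^t \partial_\zeta f\cdot \e^{-1}b_2(\e^{-1}X^\e,Y^\e)\,ds\quad\text{and}\quad \tfrac12\int_0^t\bigl[\partial_xf(0+)-\partial_xf(0-)\bigr]\,dL^{X^\e}(s,0).
\]
Using the occupation-time representation above together with the weak convergence $X^\e\Rightarrow X^0$ and continuity of $L^{X^\e}$ in the spatial variable, the first integral converges in law to $\int_0^t \beta(y(s))\cdot\partial_\zeta f\,dL^{X^0}(s,0)$, while the second converges to $\tfrac12\int_0^t[\partial_xf(0+)-\partial_xf(0-)]\,dL^{X^0}(s,0)$ by the standard stability of local-time integrals under semimartingale convergence. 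With $p_\pm=1/2$ and $\alpha\equiv 0$, the gluing condition \eqref{eq:glueing_cond} reduces to $\beta\cdot\partial_\zeta f+\tfrac12[\partial_xf(0+)-\partial_xf(0-)]=0$, so the two singular terms cancel and the remaining identity exhibits $f(X^0,y,\zeta^0)-\int_0^t\bar{\mathcal{L}}f\,ds$ as a martingale in the limit.

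Uniqueness for this martingale problem is supplied by the general framework of Section \ref{sec:uniqmarting}: $X^0$ is a time-inhomogeneous skew-Brownian motion with uniquely determined law, and given a realization of $X^0$, $\zeta^0$ is the unique solution of the linear ODE driven by $dL^{X^0}(\cdot,0)$, obtainable explicitly by variation of parameters as in Remark \ref{rem:varpar1}. The main obstacle I anticipate is the simultaneous passage to the limit in the two singular terms on the interface: uniform-in-$\e$ convergence of local-time integrals against $\partial_\zeta f$ and against the jump $[\partial_xf]_{x=0}$ must be established along the prelimit semimartingales $X^\e$ so that the cancellation dictated by \eqref{eq:glueing_cond} is preserved in the limit. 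This pairing, realized through the Poisson-corrector and occupation-time machinery of Section \ref{sec:aux_lemma}, is exactly what the Wentzell-type boundary condition is designed to encode.
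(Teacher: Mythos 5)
Your proposal follows the same three-stage skeleton (tightness, martingale identification, uniqueness) as the paper, and your tightness step via Lemma~\ref{lem:mylemma} is essentially the paper's argument in Section~\ref{sec:Tightness}. However, your middle step is a genuinely different route and it contains a real gap.

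The paper does \emph{not} identify the limit by applying a generalized It\^o--Tanaka formula to $f(X^\e,y,\bar\zeta^\e)$ together with a Poisson corrector and then passing to the limit in local-time integrals. Instead it proves an abstract Freidlin--Wentzell-type result, Theorem~\ref{thm:F-W-1993}, which dissects trajectories into excursions away from and toward the interface $\{|x|<\delta(\e)\}$ via the stopping times $\sigma_n,\phi_n$. The boundary coefficients $\beta$, $\alpha$ and the exit probabilities $p_\pm$ are then recovered from expectations of the form $\frac1\delta\E_z(\zeta^\e_i(\theta^\delta)-\xi_i)$ and $\P_z(Z^\e(\theta^\delta)\in H_\pm)$ uniformly in $|x|\leq l(\e)$; these are verified in Lemmas~\ref{lem:devcase1}--\ref{lem:devcase5} using Lemma~\ref{lem:mylemma}, Lemma~\ref{lem:conv_of fast_motion} (which is the Khasminskii--Krylov Cesaro-averaging lemma, not a corrector) and the exit-time moment estimates of Lemma~\ref{lem:taudeltaestim}. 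The local time enters the analysis only inside a short-time excursion estimate (via the Tanaka formula for $\E_z L^{\bar X^\e}(\e u,\theta^\delta)$ in the long-time case), never as a global integrator whose convergence must be controlled over $[0,t]$.

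This matters because the central step of your proposal --- asserting that $\int_0^t(\cdot)\,dL^{X^\e}(s,0)\Rightarrow\int_0^t(\cdot)\,dL^{X^0}(s,0)$ by ``stability of local-time integrals under semimartingale convergence'' --- is precisely the nontrivial point. The prelimit $X^\e$ has a smooth but rapidly oscillating diffusion coefficient $|\varphi(\e^{-1}X^\e,Y^\e)|^2$, so $L^{X^\e}(t,\cdot)$ is jointly continuous with no jump at $x=0$; the limit $X^0$ has the discontinuous coefficient $a_\pm$, so $L^{X^0}(t,0+)\neq L^{X^0}(t,0-)$ and one must work with the symmetric local time. There is no off-the-shelf stability result that yields this convergence, and the uniformity needed to conclude that the two singular contributions cancel \emph{for each $\e$ before taking the limit} is exactly what the paper's excursion decomposition is built to deliver. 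Without that, the cancellation dictated by \eqref{eq:glueing_cond} is established only at the formal level. If you want to make your It\^o--Tanaka route rigorous, you would need to prove a quantitative, uniform-in-$\e$ version of the occupation-time asymptotics you invoke --- and in doing so you would essentially be re-deriving the content of Lemmas~\ref{lem:longtimeconv1} and~\ref{lem:devcase5}, at which point the Freidlin--Wentzell framework is the cleaner way to organize the argument.

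A smaller point: you invoke a ``Poisson corrector $\e g(\e^{-1}X^\e,\cdot)$ exactly as in the proof of Theorem~\ref{main_result2}''; no such corrector is constructed in the paper's proof of Theorem~\ref{main_result2}. The closest analogue is the auxiliary function $u^\e$ inside the proof of Lemma~\ref{lem:conv_of fast_motion}, but that is used to prove a uniform-in-probability bound on the Cesaro discrepancy, not to absorb it inside an It\^o expansion.

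Your uniqueness step is correct in spirit and agrees with the paper: $X^0$ is constructed by time-change, $\zeta^0$ is given by variation of parameters driven by $dL^{X^0}(\cdot,0)$, and well-posedness follows from Ethier--Kurtz/Hille--Yosida via Lemma~\ref{lem:itm}.
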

\begin{Remark}
In this case, the variation of parameters formula implies
\[
\zeta^0(t)=\int_0^te^{\int_s^t (\partial_yb_1(X^0(r),y(r))dr}\left( \int_{-\infty}^\infty \left(\frac{b_2}{|\varphi|^2}\right)(x,y(s)) dx \right) dL^{X^0}(s).
\]
\end{Remark}

When $b_1(y)\equiv 0$, we have $y(t)\equiv 0$ and it follows from Theorem \ref{main_result2} that $\sup_{t\in[0,T]}|Y^{\e}(t)|$ converges to zero for every fixed $T$ in probability. On long time scales (of order $\e^{-2}$), however, the process converges to a limit with both singular drift and singular diffusion on the interface. Let $\bar{X}^{\e}(t) = \e X^\e(\e^{-2}t)$ and $\bar{Y}^\e(t) = Y^\e(\e^{-2}t)$. This pair satisfies the SDE
\begin{align}
  &d\bar{X}^\e(t) = \varphi(\e^{-2} \bar{X}^\e(t), \bar{Y}^\e(t)) dW(t),& \bar{X}^{\e}(0)=\e x_0,\\
  &d\bar{Y}^\e(t) = \e^{-2} b_2(\e^{-2} \bar{X}^\e(t), \bar{Y}^\e(t)) dt + \e^{-1} \sigma( \e^{-2} \bar{X}^\e(t), \bar{Y}^\e(t)) dW(t),&\bar{Y}^{\e}(0)=y_0,
\end{align}
with a Brownian motion $W$ different from the one in (\ref{eq:X_eq})-(\ref{eq:Y_eq}). 

\begin{Theorem}\label{main_result3}
 The pair $(\bar{X}^{\e},\bar{Y}^{\e})$ converges in law in $\C([0,\infty); \mathbb{R}^{1+d})$ to the unique solution of the martingale problem associated to the operator $\bar{\mathcal{L}}$ with
\[
\mathcal{L}_{\pm}=\frac{1}{2}a_{\pm}(x,y)\frac{\partial^2}{\partial x^2}
\]
and
\[
p_{\pm}(y)\equiv 1/2,\qquad \beta(y)=\int_{-\infty}^{\infty}\left(\frac{b_2}{|\varphi|^2}\right)(x,y)dx,\qquad \alpha(y)=\int_{-\infty}^{\infty}\left(\frac{\sigma\sigma^T}{|\varphi|^2}\right)(x,y)dx.
\]
Moreover, the limit process is the unique weak solution of the equation
\begin{align}
\label{eq:longlimX}d\bar{X}^0(t)&=\sqrt{a_{\pm}(\bar{X}^0(t),\bar{Y}^0(t))}dW(t),\\
\label{eq:longlimY}d\bar{Y}^0(t)&=\left(\int_{-\infty}^{\infty}\left(\frac{b_2}{|\varphi|^2}\right)(x,\bar{Y}^0(t))dx\right)dL^{\bar{X}^0}(t)
+\left(\sqrt{\int_{-\infty}^{\infty}\left(\frac{\sigma\sigma^T}{|\varphi|^2}\right)(x,\bar{Y}^0(t))dx}\right)dV^{\bar{X}^0}(t),
\end{align}
with $(\bar{X}^0(0), \bar{Y}(0))=(0,y_0)$ where $L^{\bar{X}^0}$, $W$, and $V^{\bar{X}^0}$ are as in \eqref{eq:SDE_X} and \eqref{eq:SDE_Y}.
\end{Theorem}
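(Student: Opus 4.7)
The proof follows the same martingale-method framework as Theorems \ref{main_result2} and \ref{main_result1}, but now accommodates both the singular diffusion and the singular drift on the interface. Under the $t\mapsto\e^{-2}t$ scaling, the drift $\e^{-2}b_2(\e^{-2}\bar{X}^{\e},\bar{Y}^{\e})$ and diffusion $\e^{-1}\sigma(\e^{-2}\bar{X}^{\e},\bar{Y}^{\e})$ are both large but, thanks to (\ref{eq:decay_cond}), they are effective only in an $\e^2$-neighbourhood of $\bar{X}^{\e}=0$. The plan has four steps: (i) establish tightness of $(\bar{X}^{\e},\bar{Y}^{\e})$ in $C([0,\infty);\mathbb{R}^{1+d})$; (ii) show every subsequential limit solves the martingale problem for $\bar{\mathcal{L}}$ with the claimed $(\mathcal{L}_{\pm},\beta,\alpha,p_{\pm})$; (iii) invoke well-posedness of this martingale problem from Section \ref{sec:uniqmarting} to upgrade to full convergence in law; (iv) identify the limit as a weak solution of (\ref{eq:longlimX})--(\ref{eq:longlimY}) via martingale representation.

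Step (i) for $\bar{X}^{\e}$ is straightforward since it is a continuous martingale with $d\langle\bar{X}^{\e}\rangle_t\le c_2\,dt$, so Kolmogorov--Chentsov applies. For $\bar{Y}^{\e}$ the two singular contributions must be controlled by occupation-time estimates for $\bar{X}^{\e}$ near the origin combined with the decay $\hat{b},\hat{\sigma}^2\in L^1(\mathbb{R})$. The crucial technical input, developed in Section \ref{sec:aux_lemma}, should be an averaging lemma of the heuristic form
\[
\int_0^t G(\e^{-2}\bar{X}^{\e}(s),\bar{Y}^{\e}(s))\,ds\;\longrightarrow\;\int_0^t\left(\int_{-\infty}^{\infty}\frac{G(x,\bar{Y}^0(s))}{|\varphi(x,\bar{Y}^0(s))|^2}\,dx\right)dL^{\bar{X}^0}(s)
\]
for integrands $G$ with sufficient decay in $x$; the weight $1/|\varphi|^2$ is precisely the sigma-finite invariant density on each halfline that produces the factors appearing in $\beta$ and $\alpha$.

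Step (ii) is the main obstacle. For $f\in\mathcal{D}(\bar{\mathcal{L}})$ I would apply It\^o's formula to a corrected test function $f^{\e}(x,y)=f(x,y)+\e^2 h(\e^{-2}x,y)$, where the auxiliary function $h(u,y)$ is chosen to solve a cell problem that absorbs the $O(\e^{-2})$ drift and mixed second-order terms. The bulk part of the resulting generator, after replacing the varying $|\varphi|^2$ by its Ces\`aro limit, reduces to $\mathcal{L}_{\pm}f$ on $H_{\pm}$. The residual boundary contributions reorganize, via the averaging lemma above, into a single local-time integral
\[
\int_0^t\bigl(p_+\partial_x f(0+,y)-p_-\partial_x f(0-,y)+\beta(y)\cdot\partial_y f(0,y)+\tfrac12\sum_{i,j}\alpha_{ij}(y)\partial_{y_iy_j}f(0,y)\bigr)\,dL^{\bar{X}^0}(s),
\]
which vanishes by the gluing condition (\ref{eq:glueing_cond}) built into $\mathcal{D}(\bar{\mathcal{L}})$. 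The delicate pieces are showing the corrector-induced error terms vanish in $L^1$ and that the $\e^2$-localization of $h$ is compatible with the limiting local-time scale.

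Step (iii) delegates uniqueness to Section \ref{sec:uniqmarting}. For step (iv), martingale representation applied to the limit process $(\bar{X}^0,\bar{Y}^0)$ gives the SDE: the martingale part of $\bar{X}^0$ has quadratic variation $a_{\pm}(\bar{X}^0,\bar{Y}^0)\,dt$, so it may be written as $\sqrt{a_{\pm}(\bar{X}^0,\bar{Y}^0)}\,dW$ for a standard Brownian motion $W$; the martingale part of $\bar{Y}^0$ has quadratic variation proportional to $dL^{\bar{X}^0}$ with vanishing cross-variation with $W$ (both facts read off from the corrector computation of step (ii)), so it integrates against the vector martingale $V^{\bar{X}^0}$ of the theorem statement; and the bounded-variation part of $\bar{Y}^0$ is $\beta(\bar{Y}^0)\,dL^{\bar{X}^0}$ by the same decomposition. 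The equivalence between the martingale problem and the weak SDE formulation used here is the content of the appendix referenced earlier in the paper.
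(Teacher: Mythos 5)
Your plan is plausible in outline but takes a genuinely different route from the paper, and it leaves the central technical step in the form of an unproved assertion. The paper does \emph{not} prove a global ``averaging-to-local-time'' lemma of the form you write down. Instead, Section~\ref{sec:martingale-conv} sets up a Freidlin--Wentzell-type criterion (Theorem~\ref{thm:F-W-1993}): one introduces stopping times $\sigma_n,\phi_n$ decomposing the trajectory into excursions away from and near the interface, and one verifies five purely \emph{local} conditions at the interface --- a resolvent estimate on each halfspace, existence of eigenfunctions $u_{i,\lambda}$, negligibility of the time spent in $(-\delta,\delta)$, convergence of the exit probabilities to $p_\pm=1/2$, and convergence of the rescaled expected displacement and covariance at the interface to $\beta,\alpha$. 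These are established in Lemmas~\ref{lem:longtimeconv5}--\ref{lem:longtimeconv1} using the occupation estimate of Lemma~\ref{lem:mylemma} and, crucially, an explicit Tanaka-formula computation of $\frac{1}{\delta}\E_z L^{\bar{X}^\e}(\e^2u,\theta^\delta)$ at exit times, where the local time can be evaluated because $\bar{X}^\e(\theta^\delta)=\pm\delta$. This avoids having to prove convergence of the full local-time field $(t,x)\mapsto L^{\bar{X}^\e}(t,x)$ jointly with the process.

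By contrast, your step (ii) relies on a cell-problem corrector $f^\e = f + \e^2 h(\e^{-2}x,y)$ and on the displayed averaging lemma. Two concrete issues. First, as written your lemma is missing a factor of $\e^{-2}$ on the left (without it the left-hand side vanishes by \eqref{eq:decay_cond} and Lemma~\ref{lem:mylemma}); more importantly, even with the correct normalization, establishing
\[
\e^{-2}\int_0^t G(\e^{-2}\bar{X}^{\e}(s),\bar{Y}^{\e}(s))\,ds\;\longrightarrow\;\int_0^t\Bigl(\int_{-\infty}^{\infty}\tfrac{G(x,\bar{Y}^0(s))}{|\varphi(x,\bar{Y}^0(s))|^2}\,dx\Bigr)\,dL^{\bar{X}^0}(s,0)
\]
is itself a theorem requiring joint tightness and convergence of $(\bar{X}^\e,L^{\bar{X}^\e},\bar{Y}^\e)$, careful handling of the $y$-dependence in $G$ (the occupation-time formula is applied at frozen $y$ only, after a short-time freezing argument as in Lemma~\ref{lem:longtimeconv1}), and identification of the limit local time with $L^{\bar{X}^0}$. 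None of this is in the paper and you cannot simply cite it. Second, the cell problem $\tfrac12|\varphi(u,y)|^2 h_{uu}= -\,b_2(u,y)\cdot f_y - \dots$ on the whole line with only $L^1$ data has no bounded solution unless the centering condition $\int b_2 f_y/|\varphi|^2\,du=0$ holds; the non-centered part forces $h$ to grow linearly in $u$, so the ``correction'' $\e^2 h(\e^{-2}x,y)\sim |x|\beta(y)\cdot f_y(0,y)$ is $O(|x|)$, not small. That this non-vanishing term is exactly what generates the local-time drift is the heart of the matter (compare the function $g(x,y)=f - |x|\beta_k f_{y_k}-\dots$ in the Appendix), and your sketch states it as if it were automatic. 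Either fill in a complete proof of the averaging lemma and a two-scale analysis of the linearly-growing corrector, or switch to the paper's hitting-time decomposition, which sidesteps both difficulties.

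Steps (i), (iii), and (iv) of your outline agree with the paper: tightness is Theorem~\ref{thm:tightness}, uniqueness is Section~\ref{sec:uniqmarting} via the Ethier--Kurtz criterion after constructing the time-changed solution, and the SDE identification is Lemma~\ref{lem:itm} together with the Appendix equivalence. You should also note that the paper's existence argument is constructive (the time-change of $(W_1, Y_0(L^{W_1}))$), not an abstract martingale-representation argument; the Hille--Yosida step requires an actual Markov solution in hand.
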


As one can see from (\ref{eq:longlimY}), the singular drift and diffusion on the interface are both significant on these longer time scales. Also, a careful look at the explicit formula for $\beta(y)$ and $\alpha(y)$ reveals that they are essentially averages with respect to a $\sigma$-finite invariant measure of the fixed $y$ process $dX^y(t)=\varphi(X^y(t),y)dW(t)$ (with density $c/|\varphi(\cdot,y)|^2$ for any $c>0$). In this sense, Theorem \ref{main_result3} is an averaging result where the singular term on the interface follows the averaged dynamics of the slow component with respect to the invariant measure of the fast motion. Due to the null recurrence of the fast dynamics, however, the natural timescale for this averaged process is not proportional to $t$ (as it is due to the ergodic behavior of fast-slow systems in the positive recurrent case). Instead, it is proportional to the amount of time that the fast component asymptotically spends in the essential support of $b$ and $\sigma$, or in other words to the local time at $x=0$. Since our fast motion is roughly Brownian, this time scale is of order $\sqrt{t}$.

Note that even though the theorems above are stated for a flat interface,  they are applicable to situations when the interface is a more general hypersurface. We also remark that we chose the initial conditions $x_0$, $y_0$ somewhat arbitrarily. The modifications one needs to make to include the case when $x_0,y_0$ can depend on $\e$ differently are straightforward.


\section{Auxiliary lemmas}\label{sec:aux_lemma}
First we derive an estimate on certain integral functionals of the fast process  (\ref{eq:X_eq})

\begin{Lemma}\label{lem:mylemma}
 For any $p \geq 1$, there exists $C_p>0$ such that for  all $\psi \in L^1(\reals)$, all stopping times $0<\tau_1<{\tau_2}$ with $\E\tau_2^{p/2}<\infty$, and all $\e>0$,
  \begin{equation} \label{continuity-for-integral-eq}
     \E \left| \frac{1}{\e} \int_{{\tau_1}}^{{\tau_2}} \psi(\e^{-1}X^\e(r)) dr \right|^p \leq  C_p |\psi|_{L^1(\reals)}^p\E|{\tau_2}-{\tau_1}|^{\frac{p}{2}}\color{black}.
  \end{equation}
\end{Lemma}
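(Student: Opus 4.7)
The plan is to reduce the estimate to a uniform-in-level bound on the increment of the local time of $X^\e$. Since $|\varphi|^2\geq c_1$ by Assumption 3 and $d\langle X^\e\rangle_r=|\varphi(\e^{-1}X^\e(r),Y^\e(r))|^2\,dr$, I would first write
\begin{equation*}
\int_{\tau_1}^{\tau_2}|\psi|(\e^{-1}X^\e(r))\,dr\leq \frac{1}{c_1}\int_{\tau_1}^{\tau_2}|\psi|(\e^{-1}X^\e(r))\,d\langle X^\e\rangle_r.
\end{equation*}
Applying the occupation times formula from Definition \ref{def_local_time} to the right-hand side and then making the change of variables $a=\e u$ yields, after dividing by $\e$,
\begin{equation*}
\frac{1}{\e}\left|\int_{\tau_1}^{\tau_2}\psi(\e^{-1}X^\e(r))\,dr\right|\leq \frac{1}{c_1}\int_{\mathbb{R}}|\psi(u)|\bigl(L^{X^\e}(\tau_2,\e u)-L^{X^\e}(\tau_1,\e u)\bigr)\,du.
\end{equation*}

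Taking $L^p(\Omega)$ norms and applying Minkowski's integral inequality to pull the norm inside the $u$-integral, the problem reduces to showing the single-level estimate
\begin{equation*}
\E\bigl|L^{X^\e}(\tau_2,a)-L^{X^\e}(\tau_1,a)\bigr|^p\leq C\,\E|\tau_2-\tau_1|^{p/2},
\end{equation*}
uniformly in $a\in\mathbb{R}$ and $\e>0$; combining this with $\int|\psi(u)|\,du=|\psi|_{L^1(\mathbb{R})}$ delivers the claimed inequality.

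For the single-level estimate I would invoke the Dambis-Dubins-Schwarz theorem: since $|\varphi|^2\leq c_2$, there exists a Brownian motion $B$ with $X^\e(t)-x_0=B(\langle X^\e\rangle_t)$, and $T_i:=\langle X^\e\rangle_{\tau_i}$ are stopping times in the time-changed filtration satisfying $T_2-T_1\leq c_2(\tau_2-\tau_1)$. Local time transfers as $L^{X^\e}(\tau_i,a)=L^B(T_i,a-x_0)$, so by Tanaka's formula with the symmetric sign convention,
\begin{equation*}
L^B(T_2,b)-L^B(T_1,b)=|B(T_2)-b|-|B(T_1)-b|-\int_{T_1}^{T_2}\mathrm{sgn}(B(s)-b)\,dB(s).
\end{equation*}
The first difference is dominated by $|B(T_2)-B(T_1)|$, and both that quantity and the stochastic integral have $L^p(\Omega)$-norms controlled by $C_p(\E|T_2-T_1|^{p/2})^{1/p}$ via the Burkholder-Davis-Gundy inequality, with constants independent of the level $b$. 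Combining with $T_2-T_1\leq c_2(\tau_2-\tau_1)$ gives the required estimate.

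The main obstacle I anticipate is extracting a bound on local time increments that is truly uniform in the level $a$. The clean form of Tanaka's formula (where $b$ appears only inside a bounded integrand) makes this step elementary and avoids having to invoke heavier tools such as the Barlow-Yor control of $\sup_a L^B(\tau,a)$. Once the uniform local time estimate is in place, the chain occupation formula $\to$ Minkowski $\to$ DDS completes the proof with $C_p$ depending only on $c_1$, $c_2$, $p$, and the Burkholder-Davis-Gundy constant.
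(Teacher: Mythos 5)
Your proof is correct, but it follows a genuinely different route than the paper's. The paper constructs a concrete Lipschitz antiderivative $f(x)=\int_0^x\Psi$, $\Psi(x)=\int_{-\infty}^x|\psi|$, applies Meyer--Tanaka to $f(\e^{-1}X^\e)$, isolates $\frac{1}{2\e^2}\int f''(\e^{-1}X^\e)|\varphi|^2\,dr$ as the dominant term via $|\varphi|^2\geq c_1$, and bounds the boundary and martingale terms directly with the Lipschitz bound $|f'|\leq|\psi|_{L^1}$, $|\Psi|\leq|\psi|_{L^1}$, and BDG --- never leaving the original process or mentioning local time. You instead use the occupation-density formula to rewrite the integral as a weighted average of local-time increments, push the $L^p$ norm inside via Minkowski, transfer to a Brownian motion through Dambis--Dubins--Schwarz (licit here since $\langle X^\e\rangle_t\geq c_1 t\to\infty$, and $T_i=\langle X^\e\rangle_{\tau_i}$ are indeed stopping times for the time-changed filtration), and control the level-uniform local-time increment by Tanaka plus two applications of BDG. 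Your approach makes the local-time structure of the problem explicit --- which is thematically natural given the rest of the paper --- and cleanly decouples the $\psi$-integration from the probabilistic estimate, at the cost of invoking DDS and the time-change behavior of local time. The paper's argument is shorter and entirely self-contained; ultimately both reduce to BDG on a stochastic integral with a bounded integrand over $[\tau_1,\tau_2]$.
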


\begin{proof}
If $f \in C^2(\reals)$, then by Ito formula,
  \begin{align} \label{Ito-eq}
    f(&\e^{-1}X^\e({\tau_2})) - f(\e^{-1}X^\e({\tau_1})) = \\
\nonumber &= \frac{1}{\e} \int_{\tau_1}^{\tau_2} f'(\e^{-1}X^\e(r)) \varphi (\e^{-1}X^{\e}(r),Y^{\e}(r))\color{black} dW(r) + \frac{1}{2\e^2} \int_{\tau_1}^{\tau_2} f''(\e^{-1}X^\e(r)) |\varphi(\e^{-1}X^{\e}(r),Y^{\e}(r))|^2 \color{black} dr
  \end{align}
 almost surely. In fact, by the Meyer-Tanaka formula, the above expression is valid even if $f''$ is not continuous, but is only $L^1$.
  We want to construct $f$ such that $f''(x) = |\psi(x)|$. Since $\psi \in L^1(\mathbb{R})$, the function
  \[\Psi(x) = \int_{-\infty}^x |\psi(y)| dy\]
  is well defined, continuous and bounded ($0 \leq \Psi(x) \leq |\psi|_{L^1(\mathbb{R})})$. Then we can define
  \[f(x) = \int_0^x \Psi(y) dy.\]
  This function is Lipschitz continuous because
  \begin{equation} \label{f-Lipschitz-eq}
   |f(x_2)-f(x_1)|\leq \left|\int_{x_1}^{x_2} \Psi(y) dy \right| \leq |\psi|_{L^1(\mathbb{R})} |x_2-x_1|.
  \end{equation}

  Now, because $c_1 \leq |\varphi(x,y)|^2$,
  \[ \frac{1}{\e} \int_{\tau_1}^{\tau_2} |\psi(\e^{-1}X^\e(r))| dr \leq c {\e}\color{black}\frac{1}{2\e^2} \int_{\tau_1}^{\tau_2} |\psi(\e^{-1}X^\e(r))| |\varphi(\e^{-1}X^\e(r),Y^\e(r))|^2 dr,\]
where here and in what follows, $c$ is some (possibly different  from line to line) positive constant independent of $\e$.  Plugging this into \eqref{Ito-eq},
  \begin{align*} \frac{1}{\e} \int_{\tau_1}^{\tau_2}& |\psi(\e^{-1}X^\e(r))| dr\leq \\
&\leq c \left(\e(f(\e^{-1}X^\e({\tau_2})) - f(\e^{-1}X^\e({\tau_1})))  -\color{black} \int_{\tau_1}^{\tau_2} \Psi(\e^{-1}X^\e(r)) \varphi(\e^{-1}X^\e(r),Y^\e(r)) \color{black}dW(r)  \right)
\end{align*}
  By the elementary inequality $|a+b|^p \leq 2^{p-1}(|a|^p + |b|^p)$,
  \begin{align*}
\E &\left|\frac{1}{\e} \int_{\tau_1}^{\tau_2} \psi(\e^{-1}X^\e(r)) dr \right|^p \leq c\Bigg( \E \e^p \left|f(\e^{-1}X^\e({{\tau_2}}))-f(\e^{-1}X^\e({\tau_1}))\right|^p \\
&+ \E \left|\int_{\tau_1}^{{\tau_2}} \Psi(\e^{-1}X^\e(r)) \varphi(\e^{-1}X^\e(r),Y^\e(r)) dW(r) \right|^p \Bigg).
\end{align*}
  By \eqref{f-Lipschitz-eq},
  \begin{align*}\e | f(\e^{-1}X^\e({{\tau_2}}))& -f(\e^{-1}X^\e({\tau_1}))| \leq\\
&\leq \e |\psi|_{L^1(\reals)} |\e^{-1}X^\e({{\tau_2}}) - \e^{-1}X^\e({\tau_1})| = |\psi|_{L^1(\reals)} \left|\int_{\tau_1}^{{\tau_2}} \varphi(\e^{-1}X^\e(r),Y^\e(r)) dW(r)\right|, \end{align*}
  and therefore,
  \begin{align*}\E \left|\frac{1}{\e} \int_{\tau_1}^{{\tau_2}} \psi(\e^{-1}X^\e(r)) dr \right|^p \leq c &|\psi|_{L^1(\reals)}^p \E \left| \int_{\tau_1}^{{\tau_2}} \varphi(\e^{-1}X^\e(r), Y^\e(r)) dW(r)  \right|^p +\\
&+ c\E\left| \int_{\tau_1}^{{\tau_2}} \Psi(\e^{-1}X^\e(r)) \varphi(\e^{-1}X^\e(r),Y^\e(r)) dW(r) \right|^p \end{align*}
  Each of these expectations can be bounded by the BDG inequality, and we see that
  \begin{align*}\E \left| \frac{1}{\e} \int_{\tau_1}^{{\tau_2}} \psi(\e^{-1}X^\e(r)) dr \right|^p \leq c&\color{black}|\psi|_{L^1(\reals)}^p \left| \E \int_{\tau_1}^{{\tau_2}} |\varphi(\e^{-1}X^\e(r),Y^\e(r))|^2 \color{black} dr \right|^{p/2} + \\
& + c\left| \E \int_{\tau_1}^{{\tau_2}} \Psi^2(\e^{-1}X^\e(r)) |\varphi(\e^{-1}X^\e(r),Y^\e(r))|^2 \color{black} dr \right|^{p/2} . \end{align*}
  All of these integrands are bounded because $|\varphi|\leq c_2$ and $\Psi(x) \leq |\psi|_{L^1(\reals)}$. Therefore, we can conclude that
  \[\E \left| \frac{1}{\e} \int_{\tau_1}^{{\tau_2}} \psi(\e^{-1}X^\e(r)) dr \right|^p \leq C_p|\psi|_{L^1(\reals)}^p\E|{{\tau_2}}-{\tau_1}|^{\frac{p}{2}}.\]
as required.
\end{proof}

Next we show that the slow motion \eqref{eq:Y_eq} converges to the unperturbed system in a strong sense.


\begin{Lemma}\label{lem:Y_close_to_y}
For every $p\geq 1$, there exists a constant $C_p$ such that
\[
\sup_{t\in [0,T]}\mathrm{E}|Y^{\varepsilon}(t)-y(t)|^p<C_{p}\max\{\e^pT^{p/4},\varepsilon^{p/2}T^{p/2}\}e^{C_p T^{p-1}}.
\]
\end{Lemma}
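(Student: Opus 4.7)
The plan is to subtract the integral form of \eqref{eq:Y_eq} from the ODE defining $y$ to obtain
\[
Y^{\e}(t)-y(t)=\int_0^t[b_1(Y^{\e}(s))-b_1(y(s))]\,ds+\int_0^t b_2(\e^{-1}X^{\e}(s),Y^{\e}(s))\,ds+\int_0^t\sigma(\e^{-1}X^{\e}(s),Y^{\e}(s))\,dW(s),
\]
raise to the $p$-th power via $|a+b+c|^p\le 3^{p-1}(|a|^p+|b|^p+|c|^p)$, take expectations, estimate the three resulting terms separately, and close the loop with Gronwall's inequality.

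The first term is handled by the Lipschitz property of $b_1$ together with H\"older's inequality:
\[
\E\Big|\int_0^t[b_1(Y^{\e}(s))-b_1(y(s))]\,ds\Big|^p\le L^p\,t^{p-1}\int_0^t\E|Y^{\e}(s)-y(s)|^p\,ds,
\]
which is what will eventually produce the Gronwall factor $e^{C_p T^{p-1}}$ in the statement.

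For the remaining two terms, the decay hypothesis \eqref{eq:decay_cond} combined with Lemma \ref{lem:mylemma} does the work. For the $b_2$-integral, I would pull the absolute value inside using the pointwise bound $|b_2(x,y)|\le\hat b(x)$ and apply Lemma \ref{lem:mylemma} with $\psi=\hat b\in L^1(\mathbb{R})$ at the deterministic times $0$ and $t$, giving a term of order $C\e^p t^{p/2}|\hat b|_{L^1}^p$. For the stochastic integral, the sequence is: apply Burkholder--Davis--Gundy (or Jensen's inequality if $p<2$) to reduce to $\E(\int_0^t|\sigma|^2\,ds)^{p/2}$, use the pointwise bound $|\sigma|^2\le\hat\sigma^2\in L^1(\mathbb{R})$, and then invoke Lemma \ref{lem:mylemma} with $\psi=\hat\sigma^2$ at exponent $p/2$, producing a term of order $C\e^{p/2} t^{p/4}|\hat\sigma^2|_{L^1}^{p/2}$.

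Assembling these estimates into an integral inequality for $\E|Y^{\e}(t)-y(t)|^p$ and invoking Gronwall's lemma delivers the estimate, with the two terms inside the $\max\{\cdot,\cdot\}$ tracing back to the drift-perturbation and stochastic-integral contributions respectively (the right-hand side is monotone in $t\in[0,T]$, so taking $\sup_{t\in[0,T]}$ is automatic). The only genuinely nontrivial ingredient is Lemma \ref{lem:mylemma}, which via the Meyer--Tanaka formula converts the $L^1$-integrability of $\hat b$ and $\hat\sigma^2$ into the crucial $\e$-gain: heuristically, the fast motion $\e^{-1}X^{\e}$ spends only a fraction of order $\e$ of the available time inside the effective support of these profiles. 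Once this gain is granted, the rest is routine Lipschitz--Gronwall bookkeeping.
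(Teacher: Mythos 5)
Your proposal is correct and matches the paper's proof essentially line for line: the same three-term decomposition $I_1+I_2+I_3$, the same Lipschitz--H\"older treatment of $I_1$ producing the Gronwall factor, and the same invocation of Lemma~\ref{lem:mylemma} with $\psi=\hat b$ (exponent $p$) and $\psi=\hat\sigma^2$ (exponent $p/2$ after BDG) to obtain the two terms in the maximum. Nothing to add.
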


\begin{proof}
Note that
\begin{align*}
Y^{\varepsilon}(t)-y(t)&=\int_0^t(b_1(Y^{\varepsilon}(s))-b_1(y(s))ds+\int_0^tb_2\left(\e^{-1}X^{\e}(s),Y^{\varepsilon}(s)\right)ds+\\
&\int_0^t\sigma\left(\e^{-1}X^{\e}(s),Y^{\varepsilon}(s)\right)dW(s)=I_1^{\varepsilon}(t)+I_2^{\varepsilon}(t)+I_3^{\varepsilon}(t).
\end{align*}
By the Lipschitz continuity of $b_1$ and Jensen's inequality,
\[
\mathrm{E}|I_1^{\varepsilon}(t)|^p\leq T^{p-1}\textnormal{Lip}(b_1)^p\int_0^t\mathrm{E}|Y^{\varepsilon}(s)-y(s)|^p.
\]
On the other hand, recall that $\hat{b}(x)=\sup_{y\in\mathbb{R}^d}|b_2(x,y)|$ and note \color{black}
\[
\sup_{t\in[0,T]}\mathrm{E}|I^{\varepsilon}_2(t)|^p\leq\mathrm{E}\left(\int_0^T\hat{b}(\e^{-1}X^{\e}(s))ds\right)^p\leq C_p \varepsilon^p T^{p/2},
\]
where in the last inequality we used Lemma \ref{lem:mylemma} with $\tau_1=0$ and $\tau_2=T$. Finally, it is easy to see that the scalar quadratic variation of $I_3$ is
\[
\langle I_3\rangle_t=\int_0^t\Tr\sigma\sigma^T(\e^{-1}X^{\e}(s),Y^{\e}(s))ds\leq\int_0^t\hat{\sigma}^2(\e^{-1}X^{\e}(s))ds,
\]
(where we recall $\hat{\sigma}^2(x)=\sup_{y\in\mathbb{R}^d}\Tr\sigma\sigma^T(x,y)$) and therefore by the Burkholder-Davis-Gundy inequality and Lemma \ref{lem:mylemma} we have
\begin{align}\label{eq:I3}
\sup_{t\in[0,T]}\mathrm{E}|I_3^{\varepsilon}(t)|^p&\leq  C_p\mathrm{E}{\left(\int_0^T\hat{\sigma}^2(\e^{-1}\color{black}X^{\e}(s))ds\right)}^{p/2}<C_{p}\varepsilon^{p/2}T^{p/4}.
\end{align}
The result now follows from Gronwall's lemma.
\end{proof}

Next, we recall Lemma 3.5 from \cite{KK04}, which explains why the Cesaro limits of $\frac{1}{|\varphi|^2}$ describe the limiting behavior of $X^\e$.
\begin{Lemma}\label{lem:conv_of fast_motion}
For any $f \in C(\reals_+\times \reals \times \reals^d)$ and any $T<\infty$, we have
\begin{equation}\label{eq:KKlemma}
\sup_{t\in[0,T]}\left|\int_0^{t}f(s,X^\e(s),Y^\e(s))(|\varphi|^2(\e^{-1}X^{\e}(s),Y^{\e}(s))-a_\pm( X^{\e}(s),Y^{\e}(s)))ds\right|\rightarrow 0,
\end{equation}
as $\e\downarrow 0$ in probability where $a_\pm$ is defined in \eqref{eq:apm}
Moreover, (\ref{eq:KKlemma}) holds for every family of stopping times $\{\tau^{\e}\}_{\e\leq\e_0}$ in place of $T$ such that $\{X^{\e}(\tau^{\e})\}_{\e\leq\e_0}$ is tight.
\end{Lemma}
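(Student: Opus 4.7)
The plan is to employ the corrector method, the standard strategy for null-recurrent averaging. For each fixed $y$, separately on the halfplanes $\{x>0\}$ and $\{x<0\}$, I would construct a corrector $F(x,y)$ satisfying the cell equation
\[\tfrac{1}{2}|\varphi(x,y)|^2 F_{xx}(x,y) = |\varphi(x,y)|^2 - a_\pm(x,y),\]
so that
\[F(x,y)=\int_0^x\int_0^v 2\left(1-\frac{a_\pm(w,y)}{|\varphi(w,y)|^2}\right)dw\,dv.\]
By the very definition of $a_\pm$ as the reciprocal Cesaro limit of $1/|\varphi|^2$, the integrand $1-a_\pm/|\varphi|^2$ has vanishing Cesaro mean, which forces $F_x(x,y)=o(x)$ and $F(x,y)=o(x^2)$ as $|x|\to\infty$. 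Regularity in $y$ is inherited from the Lipschitz property of $\varphi$, at least enough to produce controllable cross-derivatives $F_{xy}$ on compact $y$-sets.

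First I would establish the lemma for $f\equiv 1$ and deterministic time $T$. Applying Ito's formula to $\e^2 F(\e^{-1}X^\e(t),Y^\e(t))$, the dominant quadratic-variation term reproduces exactly $\int_0^t(|\varphi|^2-a_\pm)(\e^{-1}X^\e(s),Y^\e(s))\,ds$. The remaining boundary, martingale, and drift contributions all carry prefactors of $\e^2$ or $\e$ that, combined with the sub-quadratic growth of $F$, the sub-linear growth of $F_x$, the tightness of $X^\e$ in $C([0,T])$, and the decay assumption \eqref{eq:decay_cond}, force them to vanish in probability as $\e\to 0$. The Burkholder-Davis-Gundy inequality then upgrades convergence at each fixed $t$ to uniform convergence on $[0,T]$.

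To pass from $f\equiv 1$ to general continuous $f$, I would use a time-discretization argument. Partition $[0,T]$ into subintervals of length $\delta$, freeze $f(t_k,X^\e(t_k),Y^\e(t_k))$ on each $[t_k,t_{k+1}]$, and write
\[\sup_{t\in[0,T]}\left|\int_0^t f(s,X^\e,Y^\e)(|\varphi|^2-a_\pm)\,ds\right|\leq \sum_k\left|f(t_k,X^\e(t_k),Y^\e(t_k))\int_{t_k}^{t_{k+1}}(|\varphi|^2-a_\pm)\,ds\right|+R_\delta^\e.\]
The step-one estimate, applied on each subinterval, makes each summand small in probability for fixed $\delta$. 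The remainder $R_\delta^\e$ is bounded by the modulus of continuity of $f$ on a large compact set containing the graphs of $(X^\e,Y^\e)$; uniform continuity of $f$ on that set, together with the tightness of $(X^\e,Y^\e)$, controls $R_\delta^\e$ uniformly in $\e$. Sending $\e\to 0$ and then $\delta\to 0$ closes the argument. The stopping-time extension follows by replacing $T$ with $\tau^\e\wedge T$ and invoking the hypothesized tightness of $\{X^\e(\tau^\e)\}$ to keep the boundary term $\e^2 F(\e^{-1}X^\e(\tau^\e\wedge T),Y^\e(\tau^\e\wedge T))$ under control.

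The hard part is the quantitative growth control of $F$ and $F_x$. The Cesaro definition of $a_\pm$ yields only average decay of $1-a_\pm/|\varphi|^2$, not pointwise decay, so $F_x$ may oscillate on unit scale and the bounds $F=o(x^2)$, $F_x=o(x)$ hold only in averaged form. Converting these into genuine stochastic estimates for $\e F_x(\e^{-1}X^\e,Y^\e)$ requires supplementing the Cesaro property either with an occupation-time argument via Lemma \ref{lem:mylemma}, or with a quantitative Cesaro convergence rate. This is the technical heart of the Khasminskii-Krylov proof and is where the most care is needed.
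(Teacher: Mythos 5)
Your proposal uses the same corrector/auxiliary-function strategy as the paper (which in turn follows Lemma 3.5 of \cite{KK04}), so the architecture is right. The two real differences are in implementation. First, the paper builds $f$ directly into an $\e$-dependent corrector $u^\e(t,x,y)$ solving $|\varphi(\e^{-1}x,y)|^2 u^\e_{xx}=f(t,x,y)(|\varphi(\e^{-1}x,y)|^2-a_\pm(x,y))$, so the Ito correction term produces the entire target integral in one shot and there is no need for your time-freezing step. Your discretization scheme would also work, at the cost of an extra modulus-of-continuity argument and an appeal to tightness of $(X^\e,Y^\e)$; it has the small advantage of avoiding any smoothing of the $t$-dependence of $f$.

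Second, and more importantly, your diagnosis of ``the technical heart'' points to the wrong tools. You propose to close the gap via the occupation-time estimate (Lemma \ref{lem:mylemma}) or via a quantitative Cesaro rate. Neither is the right instrument: Lemma \ref{lem:mylemma} requires $\psi\in L^1(\reals)$, whereas $1-a_\pm/|\varphi|^2$ is bounded but not integrable, and no quantitative Cesaro rate is assumed anywhere in the hypotheses. The actual resolution is Lemma \ref{lem:Cesaro-times-f}: for bounded $g$ with Cesaro limit $\bar g$ and $f\in L^1_{\textnormal{loc}}$, $\sup_{x\in[0,R]}\big|\bar g\int_0^x f-\int_0^x f\,g(\e^{-1}\cdot)\big|\to 0$. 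Applied with $g=1/|\varphi|^2$, $f\equiv 1$, it yields exactly $\e F_x(\e^{-1}\xi,y)\to 0$ uniformly on $\{|\xi|\le R\}$; a second integration controls $\e^2 F(\e^{-1}\xi,y)$, and tightness of $X^\e$ converts these deterministic uniform-on-compacts bounds into convergence in probability. Your concern that ``$F_x$ may oscillate on unit scale'' is also misplaced: the oscillatory quantity is $F_{xx}=2(1-a_\pm/|\varphi|^2)$, but that term is exactly the one that gets matched to the target integrand in the Ito computation, so it is identified, never estimated. The antiderivative $F_x$ is Lipschitz and, after rescaling, governed entirely by the Cesaro average $\frac{\e}{\xi}\int_0^{\xi/\e}\frac{dw}{|\varphi(w,y)|^2}$, which does not oscillate; the only subtlety is uniformity of its convergence in the scaled variable, which is precisely what \eqref{eq:Cesaro-uniform} in Lemma \ref{lem:Cesaro-times-f} handles. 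With that lemma in hand, your outline closes.
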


Before proving Lemma \ref{lem:conv_of fast_motion}, we establish the following fact about Cesaro limits.
\begin{Lemma} \label{lem:Cesaro-times-f}
  Assume that $g: \reals \to \reals$ is bounded and has the Cesaro limit
  \[\bar{g}:= \lim_{x \to +\infty} \frac{1}{x} \int_0^x g(\xi)d\xi. \]
  Then for any $f \in L^1_{\textnormal{loc}}(\reals)$, and $R>0$,
  \begin{equation}
    \lim_{\e \to 0} \sup_{x\in[0,R]} \left| \bar{g} \int_0^x f(\xi) d\xi - \int_0^x {f(\xi)}{g(\e^{-1}\xi)} d\xi \right| = 0.
  \end{equation}
\end{Lemma}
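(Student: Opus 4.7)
The plan is to reduce the statement to a uniform convergence fact about the antiderivative $H_\e(x) := \int_0^x(g(\e^{-1}\xi) - \bar g)\,d\xi$, then dispatch smooth $f$ by integration by parts and push through general $f \in L^1_{\textnormal{loc}}$ by density.

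First I would show that $\sup_{x\in[0,R]}|H_\e(x)| \to 0$ as $\e \to 0$. The substitution $u = \e^{-1}\xi$ gives the scaling identity
\[
H_\e(x) = \e\bigl[\tilde G(x/\e) - \bar g\,(x/\e)\bigr], \qquad \tilde G(y) := \int_0^y g(u)\,du,
\]
so the Cesaro hypothesis $\tilde G(y)/y \to \bar g$ implies $|H_\e(x)| = x\,|\tilde G(x/\e)/(x/\e) - \bar g| \to 0$ uniformly for $x$ in any compact subinterval of $(0,R]$. Near $x = 0$ the Cesaro estimate degenerates, but the crude bound $|H_\e(x)| \leq (\|g\|_\infty + |\bar g|)\,x$ lets us choose a small collar $[0,\delta]$ on which $|H_\e|$ is uniformly small; letting $\delta \to 0$ after $\e \to 0$ yields uniform convergence on all of $[0,R]$.

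Next, for $f \in C^1([0,R])$ integration by parts yields
\[
\int_0^x f(\xi)\bigl[g(\e^{-1}\xi) - \bar g\bigr]\,d\xi = f(x)\,H_\e(x) - \int_0^x f'(\xi)\,H_\e(\xi)\,d\xi,
\]
and the right-hand side is bounded in absolute value by $\bigl(\|f\|_\infty + R\|f'\|_\infty\bigr)\sup_{[0,R]}|H_\e|$, which vanishes uniformly in $x$ by the first step. Finally, for general $f \in L^1_{\textnormal{loc}}(\reals)$ and any $\eta > 0$, I would pick $f_\eta \in C^1([0,R])$ with $\int_0^R|f - f_\eta|\,d\xi < \eta$ (via mollification and truncation). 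The contribution of $f - f_\eta$ is bounded by $(\|g\|_\infty + |\bar g|)\eta$ uniformly in $x$, while the $f_\eta$ piece tends to $0$ uniformly in $x$ by the previous step, so a standard $3\eta$-argument closes the proof.

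The only mildly delicate point is the uniform convergence of $H_\e$ near $x = 0$: the Cesaro hypothesis gives no information there because $x/\e$ need not be large. This is dispatched by the trivial linear bound $|H_\e(x)| \leq (\|g\|_\infty + |\bar g|)x$, which absorbs a small neighborhood of the origin and leaves the $[\delta,R]$ analysis to run on the complement.
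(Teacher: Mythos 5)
Your proof is correct and follows essentially the same three-step strategy as the paper: first establish uniform smallness of the antiderivative $H_\e(x)=\int_0^x(g(\e^{-1}\xi)-\bar g)\,d\xi$ on $[0,R]$ (the paper packages the near-origin/away-from-origin dichotomy into the single bound $|x\alpha(x/\e)|\leq\min\{2x|g|_{L^\infty},\,R|\bar g-\tfrac{\e}{x}\int_0^{x/\e}g|\}$, which is equivalent to your $\delta$-collar argument), then handle $f\in C^1$ by integration by parts, then pass to general $f\in L^1_{\mathrm{loc}}$ by $L^1$-approximation. No gaps; the two write-ups differ only in presentation.
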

\begin{proof}
  First, we consider the case where $f\equiv 1$ (see \cite{KK04}). In this case, for $x \in \reals$,
  \begin{equation} \label{eq:Cesaro-antiderivative}
    \int_0^x (\bar{g} - g(\e^{-1}\xi) )d \xi = x \left( \bar{g} - \frac{\e}{x} \int_0^{x/\e} g(\xi) d\xi \right) =: x \alpha\left( \frac{x}{\e} \right).
  \end{equation}
  Because $\bar{g}$ is the Cesaro limit of $g$, for any fixed $x$, the above expression converges to zero as $\e \to 0$.
  This convergence is uniform in $|x| \leq R$ because for $|x| \leq R$,
  \begin{equation} \label{eq:Cesaro-uniform}
    \left|x\alpha \left(\frac{x}{\e}\right)\right|= \left| \int_0^x (\bar{g} - g(\e^{-1} \xi)) d\xi \right| \leq \min \left\{ 2x |g|_{L^\infty}, R \left|\bar{g} - \frac{\e}{x} \int_0^{x/\e} g(\xi) d\xi \right| \right\}.
  \end{equation}

  Next, we consider the case where $f \in C^1$. In this case, by integrating by parts,
  \begin{equation}
    \int_0^x f(\xi)( \bar{g} - g(\e^{-1}\xi)) d\xi = f(x) x \alpha\left(\frac{x}{\e}\right) - \int_0^x f'(\xi) \xi \alpha \left( \frac{\xi}{\e}\right) d\xi.
  \end{equation}
  Therefore,
  \begin{equation}
    \sup_{0\leq x\leq R} \left| \int_0^x f(\xi) (\bar{g} - g(\e^{-1}\xi)) d\xi \right| \leq  \sup_{0\leq x \leq R} \left|x \alpha\left(\frac{x}{\e}\right) \right| \left( |f|_{C^1[0,R]}\right)(1+R)
  \end{equation}
  and it follows from \eqref{eq:Cesaro-uniform} that the convergence is uniform.

  Finally, for a general $f \in L^1_{\textnormal{loc}}(\reals)$ and $R>0$, we can approximate $f$ in $L^1([0,R])$ by a sequence $\{f_n\} \subset C^1([0,R])$. Then for $n \in \nat$,
  \[
    \sup_{0\leq x\leq R} \left| \int_0^x f(\xi)(\bar{g} - g(\e^{-1}\xi)) d\xi \right| \leq 2 |g|_{L^\infty([-R,R])} |f-f_n|_{L^1([-R,R])}
    + \sup_{0 \leq x \leq R} \left| \int_0^x f_n(\xi)(\bar{g} - g(\e^{-1}\xi)) d\xi \right|.
  \]
  By first choosing $n$ large and then $\e$ small we can make the above expression arbitrarily small.
\end{proof}
\begin{proof}[Proof of Lemma \ref{lem:conv_of fast_motion}]
 Very similar to the proof of Lemma 3.5 in \cite{KK04}, we only include a sketch of the proof. For $\e>0$, define the auxiliary function $u^\e(t,x,y)$ to solve
  \begin{equation}
  \begin{cases}
    |\varphi(\e^{-1}x,y)|^2 u^\e_{xx}(t,x,y) = f(t,x,y)( |\varphi(\e^{-1}x,y)|^2 - a_\pm(x,y)).\\
    u^\e(t,0,y)= u_x^\e(t,0,y) = 0, \ \ \lim_{\e \to 0}\sup_{|x|\leq R} |u^\e(t,x,y)| = 0.
  \end{cases}
  \end{equation}

  By dividing by $|\varphi|^2$ and integrating, we see that for $x>0$  (the situation for $x<0$ is similar),
  \[u^\e_x(t,x,y) = \int_0^x f(t,\xi,y)d\xi - a_+(y)\int_0^x \frac{f(t,\xi,y)}{|\varphi(\e^{-1}\xi,y)|^2}d\xi. \]
  By Lemma \ref{lem:Cesaro-times-f}, this converges to zero as $\e \to 0$ uniformly in $|x| \leq R$ if and only if $a_+(y)$ is the reciprocal of the Cesaro limit of $\frac{1}{|\varphi|^2}$. By integrating in $x$ once more, we show that $u^\e$ also converges to zero as $\e \to 0$ uniformly in $|x|\leq R$.
  The proof concludes by applying Ito formula to $u_\e(t,X^\e(t),Y^\e(t))$, identifying the integral in (\ref{eq:KKlemma}) as the Ito-correction term, a standard localization argument and estimating the resulting other terms. The last statement of the lemma follows similarly using that $X^{\e}(\tau^{\e})$ is in a large enough compact set for all $\e<\e_0$ with high probability.
\end{proof}

 Finally, we state the version of the Ito-Tanaka-Meyer formula that is relevant to our situation (a special case of \cite{P07}).

\begin{Lemma}\label{lem:itm}
Let $(Z_1,Z_2,W,V^{Z_1})$ be a weak solution of (\ref{eq:SDE_X})-(\ref{eq:SDE_Y}) with $p_+=p_-=1/2$ and assume that $f\in C(\mathbb{R}^{1+d})$ has continuous first and second order derivatives in $y$  and for fixed $y\in \mathbb{R}^d$, $f(.,y)\in C^1(\mathbb{R}_+\cup\mathbb{R}_-)$. Then
\begin{align*}
f(&Z_1(t),Z_2(t))=f(Z_1(0),Z_2(0))+\int_0^t\partial_x^{sym}f(Z_1(s),Z_2(s))\sqrt{A(Z_1(s),Z_2(s))}dW(s)\\
&+\sum_{i=1}^d\int_0^t\partial_{y_i}f(Z_1(s),Z_2(s))\sqrt{\alpha(Z_2(s))}dV^{Z_1}(s)
\\
&+\int_0^t \Bigg[p_+(Y(s))f_x(0+,Z_2(t))-p_-(Y(s))f_x(0-,Z_2(t))\\
 &+ \sum_{i=1}^d  \partial_{y_i}f(Z_1(s),Z_2(s))\beta_i(Z_2)
+\frac{1}{2}\sum_{i,j=1}^d\partial_{y_iy_j}f(Z_1(t),Z_2(t))\alpha_{ij}(Z_2(t)) \Bigg]dL^{Z_1}(t)\\
&+\frac{1}{2}\int_0^t\partial_{xx}f(Z_1(s),Z_2(s))A(Z_1(s),Z_2(s)) ds+\int_0^t \sum_{i=1}^d \partial_{y_i} f(Z_1(s),Z_2(s))B(Z_1(s),Z_2(s)) ds,
\end{align*}

where used the symmetrized $x$-derivative
\[
\partial_x^{sym}f(x,y)=\frac{\partial_xf(x+,y)+\partial_xf(x-,y)}{2}.
\]
\end{Lemma}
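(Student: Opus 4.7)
The plan is to deduce the formula from the generalized Itô--Tanaka--Meyer formula of \cite{P07}, specialized to a hyperplane of discontinuity in the first normal derivative. Since $f$ is smooth on each open half-space $H_\pm$ with (possibly different) one-sided limits $\partial_x f(0\pm,y)$ and $C^2$ in $y$, the only obstruction to applying the classical Itô formula is the jump of $\partial_x f$ across $\{x=0\}$, which will be absorbed into a symmetric local time term.

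First I would compute the relevant quadratic variations from \eqref{eq:SDE_X}--\eqref{eq:SDE_Y} using $p_+=p_-=1/2$. Then $dZ_1 = \sqrt{A(Z_1,Z_2)}\, dW$ is a continuous local martingale with $d\langle Z_1\rangle_t = A(Z_1,Z_2)\, dt$; the cross variations $d\langle Z_1, Z_{2,i}\rangle_t$ vanish because $\langle W, V^{Z_1}_i\rangle\equiv 0$; and the mutual variations of the $Z_2$ components produced by $\sqrt{\alpha}\,dV^{Z_1}$ are $d\langle Z_{2,i}, Z_{2,j}\rangle_t = \alpha_{ij}(Z_2(t))\, dL^{Z_1}(t)$, since the $B\,dt$ and $\beta\,dL^{Z_1}$ pieces of $dZ_2$ are of finite variation.

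Next, away from $\{x=0\}$ the classical Itô formula applied to $f(Z_1(t),Z_2(t))$ yields the stochastic integrals $\partial_x^{sym} f\cdot \sqrt{A}\, dW$ and $\sum_i \partial_{y_i}f\,(\sqrt{\alpha}\, dV^{Z_1})_i$, the drift terms $\frac{1}{2}\partial_{xx}f\cdot A\, ds$ and $\sum_i \partial_{y_i}f\cdot B_i\, ds$, and the singular contributions from the $dZ_2$ integrator, namely $\sum_i \partial_{y_i}f\cdot\beta_i\, dL^{Z_1}$ (from the first-order term) and $\frac{1}{2}\sum_{i,j}\partial_{y_iy_j}f\cdot\alpha_{ij}\, dL^{Z_1}$ (from the second-order term coming from $d\langle Z_{2,i}, Z_{2,j}\rangle$). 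To handle the jump of $\partial_x f$ at $x=0$, I would then invoke the Meyer--Tanaka part of Peskir's formula, which, for the symmetric local time convention of Definition \ref{def_local_time}, produces the additional boundary contribution
\[
\frac{1}{2}\int_0^t \bigl[\partial_x f(0+,Z_2(s)) - \partial_x f(0-,Z_2(s))\bigr]\, dL^{Z_1}(s).
\]
With $p_\pm\equiv 1/2$, this coincides with $\int_0^t[p_+ f_x(0+,Z_2) - p_- f_x(0-,Z_2)]\, dL^{Z_1}$, and collecting all $dL^{Z_1}$-integrands yields the bracket in the statement.

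The main obstacle is justifying the application of the formula rigorously when $Z_2$ itself has components driven by $dL^{Z_1}$ and $dV^{Z_1}$, so that the various terms must be disentangled carefully. I would handle this by a standard regularization: approximate $f$ by a sequence $f_n\in C^2(\mathbb{R}^{1+d})$ that agrees with $f$ on $\{|x|\geq 1/n\}$ and interpolates smoothly across the interface, apply the classical Itô formula to $f_n(Z_1,Z_2)$ (now a $C^2$ function of a continuous semimartingale), and pass to the limit as $n\to\infty$. The interior terms converge by dominated convergence using the occupation time formula (item 3 of Definition \ref{def_local_time}) together with $|\varphi|\geq c_1>0$ which ensures $\langle Z_1\rangle$ has a strictly positive density; the extra boundary contribution emerges from the fact that $\int_0^t \partial_{xx}f_n\cdot A\, ds$ concentrates at $\{Z_1=0\}$ as $n\to\infty$ and is identified with a local time integral via the occupation time formula applied to $Z_1$. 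A routine localization in $Z_1,Z_2$ handles integrability.
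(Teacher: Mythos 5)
The paper does not actually prove Lemma \ref{lem:itm}; it states it as a ``special case of \cite{P07}'' and leaves the verification entirely to that reference. So there is no in-paper proof to compare against, and your proposal amounts to reconstructing the special case from scratch. The reconstruction is essentially correct and takes the natural route: compute the brackets of $Z_1,Z_2$ from the SDE, mollify $f$ to $f_n\in C^2$, apply the classical It\^o formula, and pass to the limit so that the boundary term emerges from the concentration of $\partial_{xx}f_n$ near $x=0$. Your variation bookkeeping checks out: with $p_+=p_-=1/2$ the skew term in \eqref{eq:SDE_X} vanishes, so $Z_1$ is a continuous local martingale with $d\langle Z_1\rangle_t=A\,dt$; the cross-variations $\langle Z_1,Z_{2,i}\rangle$ vanish since $\langle W,V^{Z_1}_i\rangle\equiv 0$; and $d\langle Z_{2,i},Z_{2,j}\rangle_t=\alpha_{ij}(Z_2(t))\,dL^{Z_1}(t)$ because the $B\,dt$ and $\beta\,dL^{Z_1}$ parts of $dZ_2$ are of finite variation and $\langle V^{Z_1}_i,V^{Z_1}_j\rangle=\delta_{ij}L^{Z_1}$. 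Collecting terms and using $p_\pm=1/2$ in the Tanaka contribution reproduces the stated formula.

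Two technical points should not be glossed over. First, identifying the concentrating piece of $\tfrac12\int_0^t \partial_{xx}f_n(Z_1(s),Z_2(s))A(Z_1(s),Z_2(s))\,ds$ with $\int_0^t[\cdot]\,dL^{Z_1}(s)$ requires an occupation-time formula with a \emph{time-dependent} (in fact $Z_2$-dependent) weight, namely that for continuous adapted $H$ one has
\begin{equation*}
\int_0^t H_s\,g(Z_1(s))\,d\langle Z_1\rangle_s \;=\; \int_{\mathbb R} g(x)\left(\int_0^t H_s\,dL^{Z_1}(s,x)\right)dx ,
\end{equation*}
whereas item 3 of Definition \ref{def_local_time} only covers deterministic functions of $Z_1$. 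This extension is standard but should be invoked explicitly (it is precisely what Peskir's change-of-variable formula supplies). Second, the lemma hypotheses only guarantee $f(\cdot,y)\in C^1(\mathbb R_+\cup\mathbb R_-)$, yet the conclusion contains $\partial_{xx}f$ and your mollification argument needs $\partial_{xx}f\in L^1_{\mathrm{loc}}$ on each half-space to control the limit of $\int\partial_{xx}f_n\,A\,ds$ away from $x=0$; this is implicit in the statement and should be stated when constructing $f_n$. With those two points made explicit, the argument is complete and correctly reproduces the Peskir specialization that the paper cites without proof.
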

 Note that the symmetrized derivative was necessary by our choice to use the symmetric local time.
\color{black}


\section{Tightness}\label{sec:Tightness}
In this section we are going to show that the laws of the families in Theorem \ref{main_result2}, Theorem \ref{main_result1}, and Theorem \ref{main_result3} are tight in the space $C([0,\infty);\reals^{1+d})$.
To prove tightness, we will show that there exists $c>0$, $p> 0$, $q> 1$ such that for all $T>0$ and $0\leq s \leq t\leq T$, and $\e>0$,
\[\E|X^\e(t) - X^\e(s)|^p \leq c |t-s|^q, \]
\[\E |\zeta^\e(t) - \zeta^\e(s)|^p \leq c |t-s|^q\]
and in the long-time case
\[\E|\bar{X}^\e(t) - \bar{X}^\e(s)|^p \leq c |t-s|^q,\]
\[\E |\bar{Y}^\e(t) - \bar{Y}^\e(s)|^p \leq c |t-s|^q.\]
All of these results are consequences of Lemma \ref{lem:mylemma}.
\begin{Theorem} \label{thm:tightness}
  Let $(X^\e, \zeta^\e)$ satisfy the conditions of Theorem \ref{main_result2} or Theorem \ref{main_result1}. Then the family of laws of the pairs $\{(X^\e,\zeta^\e)\}_{\e>0}$ are tight. Let $(\bar{X}^\e, \bar{Y}^\e)$ satisfy the conditions of Theorem \ref{main_result3}, then the family of laws of the pairs $\{(\bar{X}^\e,\bar{Y}^\e)\}_{\e>0}$ are tight.
\end{Theorem}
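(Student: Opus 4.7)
The plan is to verify Kolmogorov's tightness criterion on each compact time interval $[0,T]$: for each of the three families, I would produce some $p>4$ and a constant $C_{p,T}$ independent of $\e$ such that $\E|\xi^\e(t)-\xi^\e(s)|^p \leq C_{p,T}|t-s|^{p/4}$ for all $0\leq s\leq t\leq T$. Combined with the deterministic initial conditions, this yields tightness in $C([0,T];\reals^{1+d})$ for every $T$, hence in $C([0,\infty);\reals^{1+d})$ by the standard equivalence. For the diffusion components $X^\e$ and $\bar X^\e$, the estimate is immediate from the Burkholder-Davis-Gundy (BDG) inequality together with $|\varphi|^2 \leq c_2$, yielding even the sharper bound $|t-s|^{p/2}$.

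For $\zeta^\e$ in Theorem \ref{main_result2}, I would split the increment into the three pieces coming from $b_1(Y^\e)-b_1(y)$, $b_2(\e^{-1}X^\e,Y^\e)dr$, and $\sigma(\e^{-1}X^\e,Y^\e)dW$. The first piece is handled by Lipschitz continuity of $b_1$, Jensen's inequality, and Lemma \ref{lem:Y_close_to_y}, which provides $\E|Y^\e-y|^p=O(\e^{p/2})$ uniformly on $[0,T]$; the resulting $\e^{p/2}$ cancels the $\e^{-p/2}$ prefactor and yields a $|t-s|^p$ bound. For the second piece, Lemma \ref{lem:mylemma} applied to $\psi=\hat b\in L^1(\reals)$ with deterministic stopping times $\tau_1=s,\tau_2=t$ gives $\E|\e^{-1}\int_s^t \hat b(\e^{-1}X^\e)dr|^p \leq C_p\|\hat b\|_{L^1}^p|t-s|^{p/2}$, and the $\e^{-1/2}\cdot\e=\e^{1/2}$ mismatch is absorbed harmlessly. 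For the third piece, BDG reduces the estimate to bounding $\E(\int_s^t \hat\sigma^2(\e^{-1}X^\e)dr)^{p/2}$, and reapplying Lemma \ref{lem:mylemma} (with exponent $p/2$ and $\psi=\hat\sigma^2$) gives $C_p|t-s|^{p/4}$. This last exponent is the binding one and forces $p>4$. The analysis of $\bar\zeta^\e$ in Theorem \ref{main_result1} is identical except that $\sigma\equiv 0$ eliminates the binding term; inspecting the proof of Lemma \ref{lem:Y_close_to_y} one sees $\E|Y^\e-y|^p=O(\e^p)$ in that case, absorbing the $\e^{-p}$ prefactor.

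For the long-time regime of Theorem \ref{main_result3}, I would first record a rescaled version of Lemma \ref{lem:mylemma}: the change of variables $r=\e^{-2}u$ together with the identity $\e^{-2}\bar X^\e(u)=\e^{-1}X^\e(\e^{-2}u)$ yields
\[\E\left|\e^{-2}\int_{\bar\tau_1}^{\bar\tau_2}\psi(\e^{-2}\bar X^\e(u))\,du\right|^p \leq C_p\|\psi\|_{L^1}^p\,\E|\bar\tau_2-\bar\tau_1|^{p/2}.\]
Applying this with $\psi=\hat b$ controls the drift $\e^{-2}\int b_2\,du$ in $\bar Y^\e$; applying it with $\psi=\hat\sigma^2$ at exponent $p/2$, together with BDG, controls the diffusion $\e^{-1}\int\sigma\,dW$, again with binding exponent $p/4$. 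The component $\bar X^\e$ itself is treated exactly as $X^\e$. The main technical hurdle across all three theorems is the interaction of the large prefactors $\e^{-1/2},\e^{-1},\e^{-2}$ with the concentrated fast oscillations of $b_2$ and $\sigma$: naive $L^\infty$ estimates would blow up, and the decay assumptions $\hat b,\hat\sigma^2\in L^1(\reals)$ enter exactly through Lemma \ref{lem:mylemma}, which trades an $\e^{-1}$ for an $L^1$ norm and a square-root improvement in the time increment. Everything else is bookkeeping with BDG, Jensen, and Lemma \ref{lem:Y_close_to_y}.
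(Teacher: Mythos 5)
Your proposal is correct and takes essentially the same approach as the paper: decompose $\zeta^\e(t)-\zeta^\e(s)$ into the $b_1$, $b_2$, and $\sigma$ pieces, control the latter two via Lemma \ref{lem:mylemma} (with BDG for the stochastic integral, giving the binding $|t-s|^{p/4}$ exponent), control the $b_1$ piece via Lipschitz continuity plus Lemma \ref{lem:Y_close_to_y}, and invoke the Kolmogorov criterion with $p>4$. The paper only writes out the case of Theorem \ref{main_result2} and dismisses the rest as ``analogous''; you supply two of the missing details correctly, namely the rescaled form of Lemma \ref{lem:mylemma} for the $\e^{-2}$-accelerated process in Theorem \ref{main_result3}, and the observation that when $\sigma\equiv 0$ one must read the sharper $O(\e^p)$ rate out of the proof of Lemma \ref{lem:Y_close_to_y} (the stated $\max\{\e^p T^{p/4},\e^{p/2}T^{p/2}\}$ bound alone is not enough to kill the $\e^{-p}$ prefactor on $\bar\zeta^\e$).
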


\begin{proof}
  We prove tightness in the context of Theorem \ref{main_result2}, as the tightness for the other systems can be proved analogously. In this case,
  \[X^\e(t) - X^\e(s) = \int_s^t \varphi(\e^{-1}X^\e(r),Y^\e(r))dW(r).\]
  By the BDG inequality and the fact that $|\varphi|^2 < c_2$,
  \[\E\color{black}|X^\e(t) - X^\e(s)|^p \leq c_2 |t-s|^{p/2}. \]
   As for $\zeta^\e$, because $Y^\e(t) = y(t) + \sqrt{\e}\zeta^\e(t)$,
  \begin{align*}
    \zeta^\e(t) - \zeta^\e(s) &=  \int_s^t \left( \frac{b_1(y(r) + \sqrt{\e}\zeta^\e(r)) - b_1(y(r))}{\sqrt{\e}} \right) dr
    + \e^{-1/2} \int_s^t b_{2\color{black}}(\e^{-1} X^\e(r), y(r) + \sqrt{\e}\zeta^\e(r)) dr\\
    &\qquad+ \e^{-1/2} \int_s^t \sigma(\e^{-1}X^\e(r) , y(r) + \sqrt{\e}\zeta^\e(r)) dW(r)= I_1^\e(s,t) + I_2^\e(s,t) + I_3^\e(s,t).
  \end{align*}
  We estimate $I_2^\e(s,t)$ and $I_3^\e(s,t)$ by using Lemma \ref{lem:mylemma}.
  By \eqref{eq:decay_cond} and Lemma \ref{lem:mylemma}
  \begin{equation} \label{eq:tightness1}
    \E|I_2^\e(s,t)|^p \leq \E \left|\e^{-1/2} \int_0^t \hat{b}(\e^{-1}X^\e(r)) dr \right|^{p} \leq c \e^{1/2}|t-s|^{p/2},
  \end{equation}
  and similarly by the BDG inequality,
  \begin{equation} \label{eq:tightness2}
    \E|I_3^\e(s,t)|^p \leq c \E\left(\e^{-1} \int_s^t \hat{\sigma}^2(\e^{-1} X^\e(r)) dr \right)^{p/2} \leq c |t-s|^{p/4}.
  \end{equation}
Finally, if $p>1$, Lemma \ref{lem:Y_close_to_y} implies
  \begin{equation} \label{eq:tightness3}
    \E|I_1^\e(s,t)|^p \leq c_{p,T} \|b_1\|^{p}_{C^1(\reals^{d+1})} |t-s|^p.
  \end{equation}
  We choose $p > 4$, $q=p/4$ and tightness follows from \eqref{eq:tightness1}, \eqref{eq:tightness2}, and \eqref{eq:tightness3}  and the Kolmogorov criterion above.
\end{proof}

As a result of tightness, for every subsequence $\e_n \to 0$ there exists a $\e_{n_k}$ such that $(X^{\e_{n_k}},\zeta^{\e_{n_k}})$ converges in distribution. In the next sections we characterize the limit and show that it is the same process for all such subsequences. Consequently, the entire sequence converges in law to that unique limit.


\section{Convergence to the martingale problem} \label{sec:martingale-conv}

\subsection{Freidlin-Wentzell type result}
We are  going to show that the processes in Theorems \ref{main_result2}, \ref{main_result1}, and \ref{main_result3} each converge to a solution of a martingale problem. In all three cases, the main tool in proving convergence is a Freidlin-Wentzell type result (\cite{FW93}, see also \cite{HM11}). Assume we have a family of $(1+k)$-dimensional processes $Z^{\e}$. Depending on the situation, $Z^\e$ will be either $Z^\e=(X^{\e}, y, \zeta^{\e})$ ($k=2d$) or $Z^\e=(\bar{X}^\e, \bar{Y}^\e)$ ($k=d$). We are going to use the notation $\E_z(\cdot)$ to the expectation when $Z^{\e}$ has initial point $z=(x,y,\zeta)$ or $z=(x,y)$ respectively. Recall that $H_{\pm}=\{z: \pm x >0\}$

\begin{Theorem} \label{thm:F-W-1993}
Let $\bar{\mathcal{L}_i}$ be second order differential operators on $H_i$, $i=\{+,-\}$ and let $D_i\subseteq C_b^{\infty}(H_i)$ be a set of test functions that have bounded derivatives of all order. Let $\tau^{\e}=\inf\{t\geq 0: |X^{\e}(t)|\leq l(\e)\}$ for some $l(\e)\downarrow 0$ as $\e\downarrow 0$. Assume that there is a $\lambda_0\geq 0$ such that for each $\lambda>\lambda_0$, the following assumptions hold.
\begin{enumerate}
\item We have for every $f\in D_i$
\begin{equation}\label{eq:Property1}
\E_z\left[ e^{-\lambda\tau^{\e}}f(Z^{\e}(\tau^{\e}))-f(z)+\int_0^{\tau^{\e}}e^{-\lambda t}\left(\lambda f(Z^{\e}(t))-\bar{\mathcal{L}}_if(Z^{\e}(t))\right)dt\right]=\mathcal{O}(k(\e))
\end{equation}
for some $k(\e)\downarrow 0$ as $\e\downarrow 0$ uniformly for $z\in H_i$.

\item For $i\in\{+,-\}$, there is a $u_{i,\lambda}\in D_i$ such that
\[
\bar{\mathcal{L}}_iu_{i,\lambda}=\lambda u_{i,\lambda}\qquad\forall z\in H_i, |x|<1
\]
and such that $u_{\pm,\lambda}(0,y)=1$ and $\pm\partial_x u_{\pm,\lambda}(0,y)<c<0$.

\item There is a $\delta(\e)\downarrow 0$ as $\e\downarrow 0$ such that $\delta(\e)/l(\e)\to\infty$, $\delta(\e)/k(\e)\to\infty$ and
\begin{equation}\label{eq:Property3}
\sup_{z\in\mathbb{R}^{1+d}}\E_z\int_0^{\infty}e^{-\lambda t}\chi_{(-\delta(\e),\delta(\e))}(X^{\e}(t))dt\rightarrow 0
\end{equation}
as $\e\downarrow 0$.

\item Define
\begin{equation}
   \theta^\delta = \inf\{t\geq0: |X^\e(t)|\geq \delta(\e) \}.
\end{equation}

There are $p_+(y)$, $p_-(y)>0$ such that $p_++p_-=1$ and
\[
\P_z(Z^{\e}( \theta^\delta)\in H_i)\rightarrow p_i(y)
\]
as $\e\downarrow 0$ uniformly in $z\in \mathbb{R}^{1+d}$, $|x|<l(\e)$.

\item There are $\beta_i(y),\alpha_{ij}(y)$ for $i,j=1,...,d$ such that
\[
\frac{1}{\delta}\E_z(Y_i^{\e}( \theta^\delta)-y_i)\rightarrow\beta_i(y)\qquad \frac{1}{\delta}\E(Y_i^{\e}( \theta^\delta)-y_i)(Y_j^{\e}( \theta^\delta)-y_j)\rightarrow\alpha_{ij}(y),
\]
as $\e\downarrow 0$, uniformly for $|x|<l(\e)$. Moreover, we have $\sup_{|x|<l(\e)}\E|Z^{\e}( \theta^\delta)-z|^{3}=o(\delta)$.
\end{enumerate}
Then for every $f\in C_0(\mathbb{R}^{1+d})$ such that $f|_{H_i}\in D_i$ and the boundary-gluing conditions \eqref{eq:glueing_cond} hold, we have
\[
\textrm{ess sup}\left|\E_z\left[\int_{t_0}^{\infty}e^{-\lambda t}\left[\lambda f(Z^{\e}(t))-\bar{\mathcal{L}}f(Z^{\e}(t))\right]dt-e^{-\lambda t_0}f(Z^{\e}(t_0))|\mathcal{F}_{t_0}\right]\right|\rightarrow 0,
\]
as $\e\downarrow 0$ for every $t_0\geq 0$, $\lambda>\lambda_0$, where $\mathcal{F}_{t_0}=\sigma(Z^{\e}(s); s\in [0,t_0])$. In particular, for all such functions, every weak limit of $Z^{\e}$ as $\e\downarrow 0$ satisfies the martingale problem for $\bar{\mathcal{L}}$ defined by $\bar{\mathcal{L}}f(z)=\bar{\mathcal{L}}_if(z)$ for $z\in H_i$.
\end{Theorem}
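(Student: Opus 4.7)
The plan is to reduce the essential-supremum statement to a uniform convergence of resolvents and then exploit the strong Markov property to decompose trajectories into alternating interior and boundary cycles. Introducing
\[
F^\e(z) := \E_z\int_0^\infty e^{-\lambda t}\bigl[\lambda f - \bar{\mathcal{L}}f\bigr](Z^\e(t))\,dt,
\]
the strong Markov property at $t_0$ shows that the conditional expectation in the theorem equals $e^{-\lambda t_0}\bigl[F^\e(Z^\e(t_0)) - f(Z^\e(t_0))\bigr]$ almost surely; hence it suffices to prove $\sup_{z}|F^\e(z)-f(z)|\to 0$ as $\e \downarrow 0$.

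For an \emph{interior} starting point $z\in H_i$ with $|x|>l(\e)$, I stop at $\tau^\e$ and apply assumption 1 to $f|_{H_i}\in D_i$. Combining with the strong Markov decomposition of $F^\e(z)$ at $\tau^\e$ yields
\[
F^\e(z)-f(z) = \E_z\bigl[e^{-\lambda\tau^\e}\bigl(F^\e(Z^\e(\tau^\e)) - f(Z^\e(\tau^\e))\bigr)\bigr] + \mathcal{O}(k(\e)).
\]
For a \emph{boundary} starting point $z$ with $|x|\leq l(\e)$, the strong Markov property at $\theta^\delta$ combined with assumption 3 (which controls the $e^{-\lambda t}$-weighted occupation of the slab $\{|x|<\delta(\e)\}$) gives
\[
F^\e(z)-f(z) = \E_z\bigl[e^{-\lambda\theta^\delta}\bigl(F^\e(Z^\e(\theta^\delta)) - f(Z^\e(\theta^\delta))\bigr)\bigr] + \bigl(\E_z[f(Z^\e(\theta^\delta))] - f(z)\bigr) + o(1),
\]
since $\lambda f - \bar{\mathcal{L}}f$ is uniformly bounded.

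The crux is showing that the middle term above is $o(\delta(\e))$. I would Taylor-expand $f$ around $(0,y)$, using the smoothness of $f$ on each halfspace and the $C^1$ continuity of $\partial_{y_i}f$ across the interface. At time $\theta^\delta$ one has $|X^\e(\theta^\delta)|=\delta(\e)$ with sign $\pm$ of probability converging to $p_\pm(y)$ by assumption 4, while by assumption 5 the first and second moments of $Y^\e(\theta^\delta)-y$ equal $\beta(y)\delta(\e)+o(\delta(\e))$ and $\alpha(y)\delta(\e)+o(\delta(\e))$ respectively, with third moment $o(\delta(\e))$. Taking expectations term by term in the Taylor expansion and collecting the $\mathcal{O}(\delta)$ contributions yields \emph{exactly} the left-hand side of the gluing condition \eqref{eq:glueing_cond}, which vanishes by hypothesis on $f$. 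Together with $f(z) = f(0,y) + \mathcal{O}(l(\e)) = f(0,y) + o(\delta(\e))$ (since $\delta(\e)/l(\e)\to\infty$), this gives the claimed $o(\delta(\e))$ bound uniformly for $|x|\leq l(\e)$.

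To close the iteration into a uniform estimate on $F^\e - f$, I would use the barrier functions $u_{\pm,\lambda}$ from assumption 2. Applying assumption 1 to $u_{\pm,\lambda}$ yields $\E_z[e^{-\lambda\tau^\e}u_{\pm,\lambda}(Z^\e(\tau^\e))] = u_{\pm,\lambda}(z)+\mathcal{O}(k(\e))$; since $u_{\pm,\lambda}(0,y)\equiv 1$ with strictly negative inward normal derivative and $u_{\pm,\lambda}(Z^\e(\tau^\e)) = 1+\mathcal{O}(l(\e))$, this forces $\E_z[e^{-\lambda\tau^\e}]$ to be bounded strictly away from $1$ on any macroscopic set $\{|x|\geq\eta\}$. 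Iterating the interior--boundary cycles with this contractive factor and summing the per-cycle errors $\mathcal{O}(k(\e)) + o(\delta(\e))$ geometrically (noting $\delta(\e)/k(\e)\to\infty$ and $\delta(\e)/l(\e)\to\infty$ render these negligible) yields $\sup_z|F^\e(z)-f(z)|\to 0$. I expect the main obstacle to be the Taylor-expansion matching in the previous paragraph: three disparate pieces of asymptotic data ($p_\pm$, $\beta$, $\alpha$) and the surface drift--diffusion structure must line up with exactly the four terms of the gluing condition — this is precisely the conceptual reason the Wentzell boundary operator emerges in the limit.
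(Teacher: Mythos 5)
Your proposal follows the same conceptual path as the paper: reduce to a uniform resolvent estimate $\sup_z|F^\e(z)-f(z)|\to 0$, decompose trajectories into alternating interior ($\tau^\e$) and boundary ($\theta^\delta$) passages, use assumption~1 inside the half-spaces, Taylor-expand at $\theta^\delta$ to match the gluing condition via assumptions~4 and~5, and use the barrier functions from assumption~2 to control the accumulation of cycles. The Taylor-matching paragraph is exactly right and is the conceptual heart of the argument.

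There is, however, a genuine gap in the final bookkeeping step, which makes the iteration as you state it not close. You note that the boundary recursion has an $o(1)$ remainder (the occupation integral $\E_z\int_0^{\theta^\delta}e^{-\lambda t}(\lambda f - \bar{\mathcal{L}}f)\,dt$ plus the $\E_z[(1-e^{-\lambda\theta^\delta})f(Z^\e(\theta^\delta))]$ term), but then assert that the per-cycle errors are $\mathcal{O}(k(\e)) + o(\delta(\e))$. Those are not compatible: the barrier-function estimate yields a contraction factor of only $1-C'\delta(\e)$ at $|x|=\delta(\e)$ (not a factor bounded away from $1$ by a constant --- your phrase ``on any macroscopic set'' is misleading, since after an upcrossing the state lies at $|x|=\delta(\e)\to 0$), so the number of cycles is $\mathcal{O}(1/\delta(\e))$, and a per-cycle $o(1)$ error then gives a total bound of $o(1)/\delta(\e)$, which does not converge. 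The resolution in the paper is that the occupation-integral contributions over all cycles live on \emph{disjoint} time intervals $[\phi_n,\sigma_{n+1}]$ contained in $\{|X^\e|<\delta(\e)\}$, so their sum is a \emph{single} occupation integral
\[
\sum_n \E_z\int_{\phi_n}^{\sigma_{n+1}}e^{-\lambda t}(\lambda f - \bar{\mathcal{L}}f)(Z^\e(t))\,dt \le \|(\lambda-\bar{\mathcal{L}})f\|_\infty\,\E_z\int_0^\infty e^{-\lambda t}\chi_{(-\delta(\e),\delta(\e))}(X^\e(t))\,dt,
\]
which tends to zero by assumption~3 without any $1/\delta$ amplification; the analogous observation handles the $(1-e^{-\lambda\theta^\delta})$ terms by telescoping. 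Only the genuinely $o(\delta)$ Taylor remainder and the $\mathcal{O}(k(\e))$ interior errors are summed against the $\mathcal{O}(1/\delta)$ cycle count, using $\delta/k(\e)\to\infty$. You should not bound each boundary cycle uniformly and then multiply; you must collect the slab contributions over all cycles and apply assumption~3 once.
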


\begin{proof}
By the Markov property of $(Z^{\e},\P_z)$, it is sufficient to show that
\[
\Delta(\e, z)=\E_z\int_0^{\infty}e^{-\lambda t}\left[\lambda f(Z^{\e}(t))-\bar{\mathcal{L}}f(Z^{\e}(t))\right]dt-f(z)\to 0
\]
as $\e\downarrow 0$ uniformly with respect to $z\in\mathbb{R}^{1+d}$.

Introduce the sequence of stopping times $\sigma_0=0$, $\phi_n=\inf\{t>\sigma_n: |X^{\e}(t)|=l(\e)\}$ and $\sigma_{n+1}=\inf\{t>\phi_n: |X^{\e}(t)|>\delta(\e)\}$. Using this sequence, we can decompose the above expression to terms corresponding to downcrossings (on $[\sigma_n,\phi_n]$) and upcrossings (on $[\phi_n,\sigma_{n+1}]$) respectively. More precisely,
\begin{align*}
\Delta(\e, z)&=\sum_{n=0}^{\infty}\E_z\left[e^{-\lambda\phi_n}f(Z^{\e}(\phi_n))-e^{-\lambda\sigma_n}f(Z^{\e}(\sigma_n))+\int_{\sigma_n}^{\phi_n}e^{-\lambda t}\left[\lambda f(Z^{\e}(t))-\bar{\mathcal{L}}f(Z^{\e}(t))\right]dt\right]+\\
&+\sum_{n=0}^{\infty}\E_z\left[e^{-\lambda\sigma_{n+1}}f(Z^{\e}(\sigma_{n+1}))-e^{-\lambda\phi_n}f(Z^{\e}(\phi_n))+\int_{\phi_n}^{\sigma_{n+1}}e^{-\lambda t}\left[\lambda f(Z^{\e}(t))-\bar{\mathcal{L}}f(Z^{\e}(t))\right]dt\right]
\end{align*}

Note that by the strong Markov property, we can bound the downcrossing terms by
\begin{align}
\nonumber \sum_{n=0}^{\infty}\E_ze^{-\lambda\sigma_n}\E_{z=Z^{\e}(\sigma_n)}\left[e^{-\lambda\tau^{\e}}f(Z^{\e}(\tau^{\e}))-f(z)+\int_0^{\tau^{\e}}e^{-\lambda t}\left[\lambda f(Z^{\e}(t))-\bar{\mathcal{L}}_{\pm}f(Z^{\e}(t))\right]dt\right]\leq\\
\label{eq:downcrossing_terms}\leq\mathcal{O}(k(\e))\sum_{n=0}^{\infty}\E e^{-\lambda\sigma_n}
\end{align}
where we used \eqref{eq:Property1}. To bound the sum on the right hand side, first note that
\begin{equation}\label{eq:sigma_n_sum_expect}
\sum_{n=0}^{\infty}\E_ze^{-\lambda\sigma_n}  \leq 1+\sum_{n=1}^{\infty}\E_ze^{-\lambda\sigma_{n-1}}\E_{Z^{\e}(\sigma_{n-1})}e^{-\lambda\tau^{\e}}
\end{equation}
Let $|x|=\delta(\e)$ and apply \eqref{eq:Property1} to $u_{i,\lambda}$ given by Condition $2.$ to get
\[
\E_ze^{-\lambda\tau^{\e}}u_{i,\lambda}(Z^{\e}(\tau^{\e}))-u_{i,\lambda}(z)=\mathcal{O}(k(\e)).
\]
This implies that
\begin{equation}\label{eq:taueps_expect}
\E_ze^{-\lambda\tau^{\e}}=\E_ze^{-\lambda\tau^{\e}}(1-u_{i,\lambda}(Z^{\e}(\tau^{\e})))+u_{i,\lambda}(z)+\mathcal{O}(k(\e))
\end{equation}
Since $|X^{\e}(\tau^{\e})|=l(\e)$, we have $|1-u_{i,\lambda}(Z^{\e}(\tau^{\e})|=\mathcal{O}(l(\e))$ whereas the condition on the derivative of $u_{i,\lambda}$ normal to the interface implies $u_{i,\lambda}(z)\leq 1-C\delta(\e)$ for some $C>0$ since $|z|=\delta(\e)$. Plugging these back to \eqref{eq:taueps_expect} gives
\[
\sup_{|z|=\delta(\e)}\E_ze^{-\lambda\tau^{\e}}\leq 1-C'\delta(\e)
\]
for small enough $\e$ for some $C'>0$.  This, together with \eqref{eq:sigma_n_sum_expect}, implies that $\sum_{n=0}^{\infty}\E_ze^{-\lambda\sigma_n}=\mathcal{O}(1/\delta(\e))$ which in turn implies that the bound in \eqref{eq:downcrossing_terms} converges to zero.

To bound the upcrossing terms, first note that the sum of the integrals $\int_{\phi_n}^{\sigma_{n+1}}$ can be bounded by
\[
||(\lambda -\bar{\mathcal{L}})f||_{\infty}\E_z\int_0^{\infty}e^{-\lambda t}\chi_{(-\delta(\e),\delta(\e))}(Z^{\e}(t))dt\rightarrow 0
\]
as $\e\downarrow 0$ uniformly in $z$  by \eqref{eq:Property3}, where we used that $f$ has bounded derivatives of all order. The remaining terms can be written as
\begin{equation}\label{eq:remaining_terms}
\sum_{n=0}^{\infty}\E_z(e^{-\lambda\sigma_{n+1}}-e^{\lambda\phi_n})f(Z^{\e}(\sigma_{n+1}))+\sum_{n=0}^{\infty}e^{-\lambda\phi_n}(f(Z^{\e}(\sigma_{n+1}))-f(Z^{\e}(\phi_n))).
\end{equation}
where the first term can immediately be bounded by
\[
||f||_{\infty}\sup_{z\in\mathbb{R}^{1+d}}\E_z\int_0^{\infty}\lambda e^{-\lambda t}\chi_{(-\delta(\e),\delta(\e))}(Z^{\e}(t))dt\to 0.
\]
For the second term in \eqref{eq:remaining_terms}, we can use the strong Markov property to write
\begin{equation}\label{eq:last_term_FWthm}
\sum_{n=0}^{\infty}\E_ze^{-\lambda\phi_n}\E_{z'=Z^{\e}(\phi_n)}\left(f(Z^{\e}( \theta^\delta))-f(z)\right).
\end{equation}
Using $|x'|=l(\e)$ and Taylor's formula, we get
\begin{align*}
\E_{z'}&\left(f(Z^{\e}( \theta^\delta))-f(z)\right)=E_{z'}[f(Z^{\e}( \theta^\delta))-f(0,y')]+\mathcal{O}(l(\e))=\\
&=\delta\P_z(Z^{\e}( \theta^\delta)\in H_+)\partial_xf(0+,y')-\delta\P_z(Z^{\e}( \theta^\delta)\in H_-)\partial_xf(0-,y')+\\
&+\delta\sum_{i=1}^d\partial_if(0,y')\frac{1}{\delta}\E_{z'}(Y^{\e}_i( \theta^\delta)-y'_i)+\delta\sum_{i,j=1}^d\partial_i\partial_jf(0,y')\frac{1}{\delta}E_{z'}(Y_i^{\e}( \theta^\delta)-y'_i)(Y_j^{\e}( \theta^\delta)-y'_j)+\\
&+o(\delta),
\end{align*}
where we used the last statement of condition 5. Now using condition 4., the rest of condition 5. and the gluing conditions, we get that this entire expression is $o(\delta)$ uniformly in $|x'|=l(\e)$. Noting that
\[
\sum_{n=0}^{\infty}\E_ze^{-\lambda\phi_n}\leq\sum_{n=0}^{\infty}e^{-\lambda\sigma_n}=\mathcal{O}(1/\delta),
\]
\eqref{eq:last_term_FWthm} converges to zero and the proof is finished.
That every weak limit of $Z^{\e}$ converges to the solution of the martingale problem of $\bar{\mathcal{L}}$ follows exactly as in \cite{FW93} after noticing that if the Laplace transform of a function is identically zero for sufficiently large $\lambda$ then the function itself is identically zero.
\end{proof}

 We are going to need some estimates on the exit time $ \theta^\delta$.

\begin{Lemma}\label{lem:taudeltaestim}

\[
\E_z \theta^\delta\leq C\delta^2,\qquad\E_z( \theta^\delta)^2\leq C\delta^4.
\]
Moreover, there is a $\delta_0>0$ such that $\delta\in (0,\delta_0]$ and $|x|<\delta$ implies
\[
\E_z e^{u \theta^\delta}\leq \frac{1}{\cos(Cu\delta)}.
\]
\end{Lemma}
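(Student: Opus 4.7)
The key structural fact is that $X^\e$ is a continuous one-dimensional martingale whose quadratic-variation density $|\varphi(\e^{-1}X^\e(t),Y^\e(t))|^2$ is pinched in $[c_1,c_2]$ by Assumption~3. Consequently, every estimate on $\theta^\delta$ reduces to a classical one-dimensional calculation and is uniform in the $(Y^\e,\zeta^\e)$-components of the starting point $z$, which is why the bounds depend only on $\delta$.

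For the first moment, I would apply Ito's formula to $f(x)=x^2$ and optionally stop at $\theta^\delta$:
\[
\E_z X^\e(\theta^\delta)^2 - x^2 = \E_z \int_0^{\theta^\delta} |\varphi(\e^{-1}X^\e(s),Y^\e(s))|^2\,ds \geq c_1\,\E_z \theta^\delta.
\]
Since $|X^\e(\theta^\delta)|=\delta$ by path continuity, the left side is $\delta^2-x^2\leq\delta^2$, giving $\E_z\theta^\delta\leq\delta^2/c_1$. For the second moment, the identity $(\theta^\delta)^2 = 2\int_0^{\theta^\delta}(\theta^\delta-s)\,ds$ combined with the strong Markov property reduces everything to the first-moment bound: conditionally on $\mathcal F_s$, the remaining time $\theta^\delta-s$ is a fresh exit time from $(-\delta,\delta)$ started at $X^\e(s)$, and the first-moment estimate is uniform in the frozen $Y$-coordinate, so taking expectations yields $\E_z(\theta^\delta)^2 \leq (2\delta^2/c_1)\,\E_z\theta^\delta \leq 2\delta^4/c_1^2$.

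For the exponential moment, I would try the test function $f(x)=\cos(kx)$ and tune $k$ so that $M(t) = e^{u(t\wedge\theta^\delta)}f(X^\e(t\wedge\theta^\delta))$ is a non-negative supermartingale. A direct Ito computation gives drift $e^{ut}\bigl(u - \tfrac{1}{2}|\varphi|^2 k^2\bigr)\cos(kx)$, which is $\leq 0$ on $(-\delta,\delta)$ provided $k^2 \geq 2u/c_1$ and $k\delta<\pi/2$ (so that $\cos(kx)>0$ throughout). The second constraint is precisely what forces the existence of the threshold $\delta_0$; for $\delta\in(0,\delta_0]$ and $|x|<\delta$, optional stopping then produces $\cos(k\delta)\,\E_z e^{u\theta^\delta} \leq \cos(kx) \leq 1$, which rearranges to the stated bound after absorbing constants into $C$. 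The only real subtlety is verifying the sign of the Ito drift and enforcing the positivity constraint $k\delta<\pi/2$ on the test function; once those are in hand, the ellipticity bound $c_1\leq|\varphi|^2\leq c_2$ makes the argument decouple cleanly from the $(Y,\zeta)$-dynamics, yielding the claimed uniformity in $z$.
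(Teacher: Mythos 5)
Your proof is correct, and it reaches the same estimates as the paper's, but by a different route. The paper's proof is a one-liner via the Dambis--Dubins--Schwarz time change: it identifies $X^\e$ in law with $x + W\bigl(\int_0^\cdot |\varphi|^2\,ds\bigr)$, so that $\theta^\delta_W \stackrel{d}{=} \int_0^{\theta^\delta}|\varphi|^2\,ds$, and then uses $c_1 \leq |\varphi|^2 \leq c_2$ to transfer the classical Brownian exit-time formulas (mean, second moment, exponential moment) to $\theta^\delta$. Your approach instead proves each moment estimate directly with a Lyapunov/supermartingale argument ($x^2$ for the first moment, strong Markov plus the first-moment bound for the second, $\cos(kx)$ for the exponential). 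The time-change argument is shorter and uniform across all three claims; your version is more self-contained, avoids invoking DDS explicitly, and shows exactly where the ellipticity constants enter. Either is acceptable, and both make clear why the bounds are uniform in the $(Y,\zeta)$-coordinates of $z$.

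One caveat worth flagging: your supermartingale calculation with $k=\sqrt{2u/c_1}$ yields $\E_z e^{u\theta^\delta} \leq 1/\cos\bigl(\sqrt{2u/c_1}\,\delta\bigr)$, i.e.\ $1/\cos(C\sqrt{u}\,\delta)$, with $\delta_0$ determined by the positivity constraint $\sqrt{2u/c_1}\,\delta < \pi/2$. You wrote that this ``rearranges to the stated bound after absorbing constants into $C$,'' but it does not: $\sqrt{u}$ cannot be turned into $u$ by a constant. In fact the $\sqrt{u}$-dependence is what the time-change reduction to the Brownian formula
\[
\E_0 e^{u\theta^\delta_W} = \frac{1}{\cos(\sqrt{2u}\,\delta)}, \qquad \sqrt{2u}\,\delta < \pi/2,
\]
also gives, so the form $1/\cos(Cu\delta)$ in the lemma statement is a typo for $1/\cos(C\sqrt{u}\,\delta)$. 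This is harmless for the paper's downstream applications (both forms are bounded uniformly as $\delta \to 0$ for fixed $u$), but you should state the correct form rather than suggest the two are equivalent by absorbing constants.
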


\begin{proof}
Let  $W$ be a Brownian motion such that $X^{\e}(\cdot)$ and $x+W(\int_0^\cdot |\varphi|^2(s)ds)$ have the same distribution and therefore so do $ \theta^\delta_W$ (the hitting time of $\delta$ by $W$) and $\int_0^{ \theta^\delta}|\varphi(s)|^2ds$. The results follow from this, $|\varphi|^2\in[c_1,c_2]$ and well-known Brownian formulas.
\end{proof}

\subsection{Martingale Convergence for the long-time asymptotics}
{
Now we show that the system of equations in Theorem \ref{main_result3} satisfies the conditions of Theorem \ref{thm:F-W-1993} with $D_i=\mathcal{C}_b^{\infty}(H_i)$. As $b_1\equiv 0$, we denote $b\equiv b_2$ in this case. Let $Z^\e=(\bar{X}^\e, \bar{Y}^\e)$ solve
\begin{equation}
 \begin{cases}
  d\bar{X}^\e(t) = \varphi(\e^{-2}\bar{X}^\e(t), \bar{Y}^\e(t)) dW(t),\\
  d\bar{Y}^\e(t) = \e^{-2}b(\e^{-2}\bar{X}^\e(t), \bar{Y}^\e(t)) dt + \e^{-1}\sigma(\e^{-2}\bar{X}^\e(t),\bar{Y}^\e(t))dW(t).
 \end{cases}
\end{equation}

\begin{Lemma}\label{lem:longtimeconv5}
Let $f\in D_i$ $i=-,+$. If we choose $l(\e)$ so that $l(\e) \to 0$ and $\e^{-2}l(\e) \to +\infty$, then \eqref{eq:Property1} is satisfied for $Z^\e = (\bar{X}^\e,\bar{Y}^\e)$ and $\bar{\mathcal{L}}f(x,y) = \frac{1}{2} a_{\pm}(x,y) f_{xx}(x,y)$.
\end{Lemma}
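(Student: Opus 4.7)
The plan is to apply It\^o's formula to $e^{-\lambda t}f(\bar{X}^\e(t),\bar{Y}^\e(t))$ on the random interval $[0,\tau^\e]$ and take expectations. Since the generator of $(\bar{X}^\e,\bar{Y}^\e)$ is
\begin{align*}
\mathcal{L}^\e f(x,y)&=\tfrac{1}{2}|\varphi|^2(\e^{-2}x,y)\,f_{xx}+\e^{-2}b(\e^{-2}x,y)\cdot\nabla_yf\\
&\quad+\tfrac{1}{2\e^2}\Tr(\sigma\sigma^T)(\e^{-2}x,y)\Delta_yf+\e^{-1}(\varphi\sigma^T)(\e^{-2}x,y)\cdot\nabla_x\nabla_yf,
\end{align*}
the identity in \eqref{eq:Property1} becomes
\[
\E_z\!\left[e^{-\lambda\tau^\e}f(Z^\e(\tau^\e))-f(z)+\!\int_0^{\tau^\e}\!e^{-\lambda t}(\lambda f-\bar{\mathcal{L}}_if)(Z^\e(t))\,dt\right]=\E_z\!\int_0^{\tau^\e}\!e^{-\lambda t}(\mathcal{L}^\e f-\bar{\mathcal{L}}_if)(Z^\e(t))\,dt,
\]
so I need to estimate the right-hand side. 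The integrand $\mathcal{L}^\e f-\bar{\mathcal{L}}_if$ naturally decomposes into one ``averaging'' term $\tfrac{1}{2}(|\varphi|^2(\e^{-2}x,y)-a_\pm(x,y))f_{xx}$ and three ``decay'' terms carrying the prefactors $\e^{-2}b$, $(2\e^2)^{-1}\sigma\sigma^T$ and $\e^{-1}\varphi\sigma^T$.

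The key observation for the decay terms is that on $[0,\tau^\e]$ we have $|\e^{-2}\bar{X}^\e(t)|\geq\e^{-2}l(\e)\to\infty$, so along the process $\hat b$ and $\hat\sigma^2$ can be replaced by their tail-cutoffs $\tilde b_\e(u):=\hat b(u)\chi_{\{|u|\geq\e^{-2}l(\e)\}}$ and $\tilde s_\e(u):=\hat\sigma^2(u)\chi_{\{|u|\geq\e^{-2}l(\e)\}}$, and the $L^1$ hypothesis on $\hat b,\hat\sigma^2$ gives $|\tilde b_\e|_{L^1},|\tilde s_\e|_{L^1}\to 0$. Repeating the Meyer--Tanaka argument of Lemma \ref{lem:mylemma} verbatim, but with the $\e^{-2}$ scaling (so the Ito correction carries $(2\e^4)^{-1}$ and the analogous bound picks up $\e^{2p}$ instead of $\e^p$), and then telescoping over unit intervals against the $e^{-\lambda t}$ damping, yields for every $\psi\in L^1(\reals)$
\[
\sup_{z\in\mathbb{R}^{1+d}}\E_z\int_0^\infty e^{-\lambda t}|\psi(\e^{-2}\bar{X}^\e(t))|\,dt\leq C_\lambda\,\e^2\,|\psi|_{L^1}.
\]
Applied with $\psi=\tilde b_\e$, the $\e^{-2}$ prefactor cancels the $\e^2$ and the $b$-term is bounded by $C\|\nabla_yf\|_\infty|\tilde b_\e|_{L^1}\to 0$; the same estimate with $\psi=\tilde s_\e$ handles the $\sigma\sigma^T$-term; for the mixed term a Cauchy--Schwarz in the product measure $d\Pro\otimes e^{-\lambda t}dt$, together with the bound $|\varphi\sigma^T|^2\leq c_2\hat\sigma^2$, produces an estimate of order $\sqrt{|\tilde s_\e|_{L^1}}\to 0$ after cancelling the $\e^{-1}$.

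For the averaging term I will adapt the auxiliary-function trick from the proof of Lemma \ref{lem:conv_of fast_motion}. Concretely, I construct $u^\e(x,y)$ solving
\[
|\varphi(\e^{-2}x,y)|^2\,u^\e_{xx}(x,y)=f_{xx}(x,y)\bigl(|\varphi(\e^{-2}x,y)|^2-a_\pm(x,y)\bigr)
\]
on each half-line $\pm x>l(\e)$, with the boundary conditions $u^\e(\pm l(\e),y)=u^\e_x(\pm l(\e),y)=0$. Dividing by $|\varphi|^2$, integrating twice, and invoking Lemma \ref{lem:Cesaro-times-f} with scale $\e^2$ in place of $\e$ gives that $u^\e$, $u^\e_x$ and (by differentiating through the smooth $y$-dependence of $f$, $a_\pm$ and $|\varphi|^2$) also $\nabla_yu^\e$ and $\nabla_x\nabla_yu^\e$ converge to $0$ uniformly on compacts in $x$. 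Applying It\^o to $e^{-\lambda t}u^\e(Z^\e(t))$ on $[0,\tau^\e]$, the leading correction $\tfrac{1}{2}|\varphi|^2u^\e_{xx}$ exactly reproduces the averaging integrand, the boundary term at $t=\tau^\e$ vanishes by construction because $|\bar X^\e(\tau^\e)|=l(\e)$, the boundary term at $t=0$ is controlled by the uniform-on-compacts smallness of $u^\e$, and the remaining Ito contributions (involving $\e^{-2}b\nabla_yu^\e$, $(2\e^2)^{-1}\sigma\sigma^T\Delta_yu^\e$, $\e^{-1}\varphi\sigma^T\nabla_x\nabla_yu^\e$ and $\lambda u^\e$) are bounded by exactly the same estimates that handle the decay terms.

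The main obstacle will be this last step: checking that $u^\e$ and its relevant derivatives stay bounded uniformly on the bounded $x$-intervals that are actually needed in the application of Theorem \ref{thm:F-W-1993} (the subsequent Freidlin--Wentzell machinery only requires uniformity for $|x|$ on the scale of $\delta(\e)$, i.e.\ in a shrinking neighbourhood of the interface), and that the perturbative terms $\mathcal{L}^\e u^\e-\tfrac{1}{2}|\varphi|^2u^\e_{xx}$ contribute $o(1)$. Once these are in hand, combining the four estimates gives a rate $k(\e)$ equal to the maximum of the Ces\`aro convergence rate from Lemma \ref{lem:Cesaro-times-f} and $|\tilde b_\e|_{L^1}+\sqrt{|\tilde s_\e|_{L^1}}$, which tends to zero under the hypothesis $\e^{-2}l(\e)\to\infty$.
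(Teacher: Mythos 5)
Your proposal is essentially the paper's own argument, reorganized slightly. You identify the same four error terms ($\e^{-2}b$, $\e^{-2}\sigma\sigma^T$, $\e^{-1}\varphi\sigma^T$, and the Ces\`aro averaging term), you handle the first three via the same combination of tail-cutoff (using that $|\bar X^\e(t)|\geq l(\e)$ on $[0,\tau^\e]$ and $\e^{-2}l(\e)\to\infty$), Lemma~\ref{lem:mylemma} rescaled with $\e^2$ in place of $\e$, telescoping over unit intervals against the $e^{-\lambda k}$ weights, and Cauchy--Schwarz for the mixed term — all of which is exactly what the paper does. The cleanly stated intermediate inequality $\sup_z\E_z\int_0^\infty e^{-\lambda t}|\psi(\e^{-2}\bar X^\e(t))|\,dt\leq C_\lambda\e^2|\psi|_{L^1}$ is a nice packaging of the paper's inline computation and is correct.

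The one place you genuinely deviate is the averaging term. The paper simply cites Lemma~\ref{lem:conv_of fast_motion} (which internally constructs the corrector $u^\e$ and applies It\^o with localization) and implicitly converts in-probability convergence on $[0,T]$ to smallness of the discounted expectation over $[0,\tau^\e]$. You instead re-derive the corrector and apply It\^o to $e^{-\lambda t}u^\e(Z^\e(t))$ on $[0,\tau^\e]$ directly, killing the boundary term at $\tau^\e$ by choosing $u^\e(\pm l(\e),\cdot)=u^\e_x(\pm l(\e),\cdot)=0$. This is slightly more work but has the advantage of staying entirely within the Laplace-transform / stopped-process framework, so you never need the probability-to-expectation transfer. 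Your self-identified obstacle is real but no worse than what the paper itself faces: $u^\e$ and its $y$-derivatives are only small uniformly on compact $x$-sets (by Lemma~\ref{lem:Cesaro-times-f}) and grow for large $|x|$, so one needs the standard localization argument — the $e^{-\lambda t}$ damping and boundedness of $f_{xx}$ control the tail, and the compact-set estimate handles the rest. Since the paper's citation of Lemma~\ref{lem:conv_of fast_motion} carries precisely the same caveat, this is not a gap. Overall the proposal is correct and matches the paper, with the averaging term treated by an inlined but equivalent argument.
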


\begin{proof}
  By Ito's formula and a standard localization argument,
  \[\begin{array}{l}
  \ds{K(\e,z)=\E_{z} \left[e^{-\lambda \tau^\e} f(Z^\e(\tau^\e)) - f(z)  - \int_0^{\tau^\e} e^{-\lambda t}\left(\lambda f(Z^\e(t)) - \frac{1}{2}a_\pm^2\color{black}(Z^\e(t)) f_{xx}(Z^\e(t)) \right)dt\right]}\\
  \vs
  \ds{= \e^{-2} \E_{z} \int_0^{\tau^\e} e^{-\lambda t} \left(  \sum_{i=1}^d b_i(\e^{-2}\bar{X}^\e(t), \bar{Y}^\e(t))  f_{y_i}(Z^\e(t))  \right) dt}\\
  \ds{+ \frac{1}{2} \e^{-2} \E_{z} \int_0^{\tau^\e} e^{-\lambda t}\left( \sum_{i,j=1}^d (\sigma\sigma^T)_{ij}(\e^{-2} \bar{X}^\e(t), \bar{Y}^\e(t)) f_{y_i y_j}(Z^\e(t))\right)dt}\\
  \ds{+\e^{-1}\E_{z} \int_0^{\tau_\e} e^{-\lambda t} \left( \sum_{i=1}^d (\sigma \varphi^T)_{i}(\e^{-2}\bar{X}^\e(t),\bar{Y}^\e(t)) f_{x y_i}(Z^\e(t)) \right) dt}\\
   \ds{+  \frac{1}{2} \E_{z} \int_0^{\tau^\e} e^{-\lambda t} \left( |\varphi|^2(\e^{-2}\bar{X}^\e(t),\bar{Y}^\e(t)) - a_\pm(Z^\e(t)) \right)f_{xx}(Z^\e(t)) dt .}\\
   \ds{:= I_1^\e + I_2^\e +I_3^\e + I_4^\e.}
  \end{array}\]

  We estimate each term separately. Note that
  \[
   \ds{I^\e_1 \leq c \e^{-2} \E_{z}\color{black}\int_0^{\tau^\e} e^{-\lambda t} \hat{b}(\e^{-2} \bar{X}^\e(t)) \|f\|_{C^1} dt,}\\
  \]
 then because $\tau^\e = \inf\{t>0: |\bar{X}^\e| \leq l(\e)\}$, we have
  \[\begin{array}{l}
    \ds{I^\e_1 \leq c\e^{-2}\|f\|_{C^1} \E_{z}\color{black}\int_0^\infty e^{-\lambda t} \hat{b}(\e^{-2}\bar{X}^\e(t)) \mathbbm{1}_{\{|\bar{X}^\e(t)|>l(\e)\}} dt}\\
    \ds{\leq c \e^{-2}\|f\|_{C^1} \sum_{k=0}^\infty e^{-\lambda k} \E_{z}\color{black}\int_k^{k+1} \hat{b}(\e^{-2} \bar{X}^\e(t))\mathbbm{1}_{\{|\bar{X}^\e(t)|>l(\e)\}} dt}.
    \end{array}\]
  By Lemma \ref{lem:mylemma}, there exists a constant $c$, independent of $k$ and $\e$ such that
  \[\e^{-2} \E_{z}\color{black}\int_k^{k+1} \hat{b}(\e^{-2}\bar{X}^\e(t))\mathbbm{1}_{\{|\bar{X}^\e(t)|>l(\e)\}} dt \leq c \int\limits_{\{|x|>\e^{-2}l(\e)\}}\hat{b}(x) dx.\]
  Because $\hat{b} \in L^1(\reals)$ and $\e^{-2}l(\e) \to +\infty$, it follows that $I^\e_1 \to 0$ uniformly in $z\in H_i$\color{black}. The analysis of $I^\e_2$ is analogous to that of $I^\e_1$ and we can conclude that $I^\e_2 \to 0$  unifomly in $z\in H_i$\color{black}.
  To estimate $I^\e_3$, we recall that $\|\varphi\|_{C}^{ 2}\leq c_2$, and therefore a little manipulation shows that
  \[|I^\e_3| \leq \e^{-1} c \|f\|_{C^2}\E_{z}\color{black} \int_0^{\tau^\e} e^{-\lambda t} \hat{\sigma}(\e^{-2}\bar{X}^\e(t)) dt.   \]
  By H\"older's inequality for the integral $\E_{z}\int_0^{\tau^{\e}}e^{-\lambda t}\cdot dt$\color{black},
  \begin{align*}
  &|I^\e_3| \leq \e^{-1} c \|f\|_{C^2} {\left(\E_{z}\color{black} \int_0^{\tau^\e} e^{-\lambda t} dt \right)}^{1/2} {\left(\E_{z}\color{black}\int_0^{\tau^\e} e^{-\lambda t} \hat{\sigma}^2(\e^{-2}\bar{X}^\e(t)) dt \right)}^{1/2}\\
   &\leq c \|f\|_{C^2} {\left( \e^{-2} \E_{z}\color{black}\int_0^{\tau^\e} e^{-\lambda t} \hat{\sigma}^2(\e^{-2}\bar{X}^\e(t)) dt \right)}^{1/2} .
  \end{align*}
  Therefore, $I^\e_3 \to 0$ unifomly in $z\in H_i$ by the same arguments used for $I^\e_1$\color{black}.
  We show that $I^\e_4 \to 0$ by applying Lemma \ref{lem:conv_of fast_motion}.
\end{proof}

\begin{Lemma}\label{lem:longtimeconv4}
  For each $\lambda>0$, and $i\in\{+,-\}$, there is a $u_{i,\lambda}\in D_i$ such that
\[
\frac{1}{2}a_{i}(z)\partial_{xx}u_{i,\lambda}(z)=\lambda u_{i,\lambda}(z)\qquad\forall z\in H_i, |x|<1
\]
and such that $u_{\pm,\lambda}(0,y)=1$ and $\pm\partial_x u_{\pm,\lambda}(0,y)<c<0$.
\end{Lemma}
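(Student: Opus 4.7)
The plan is to solve the ODE $\frac{1}{2}a_\pm(y)\,\partial_{xx}u = \lambda u$ explicitly as a linear constant-coefficient ODE in $x$ (since, in the region $H_\pm$, the coefficient $a_\pm$ depends only on $y$, not on $x$), and then cut off in $x$ to produce a bounded smooth function on all of $H_\pm$. Concretely, set
\[
\mu_\pm(y) := \sqrt{2\lambda/a_\pm(y)},
\]
which is well-defined because Assumption~3 gives $c_1\le|\varphi|^2\le c_2$, hence the Cesaro limits satisfy $c_1\le a_\pm(y)\le c_2$, so $\mu_\pm(y)\in[\sqrt{2\lambda/c_2},\sqrt{2\lambda/c_1}]$. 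For each fixed $y$, the function $e^{\mp \mu_\pm(y)x}$ solves the ODE on $H_\pm$ with value $1$ and derivative $\mp\mu_\pm(y)$ at $x=0$.

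Next, to obtain an element of $D_\pm=C_b^\infty(H_\pm)$, pick a $C^\infty$ cutoff $\chi\colon\mathbb{R}\to[0,1]$ with $\chi\equiv 1$ on $[-3/2,3/2]$ and $\chi\equiv 0$ off $[-2,2]$, and define
\[
u_{\pm,\lambda}(x,y) := \chi(x)\,e^{\mp\mu_\pm(y)x}.
\]
Then $u_{\pm,\lambda}$ is bounded with bounded derivatives of all orders (the exponential factor is bounded on $\{|x|\le 2\}$ and $\chi$ kills it elsewhere), and on $\{|x|<1\}$ we have $u_{\pm,\lambda}=e^{\mp\mu_\pm(y)x}$, which satisfies $\partial_{xx}u_{\pm,\lambda}=\mu_\pm(y)^2 u_{\pm,\lambda}=\frac{2\lambda}{a_\pm(y)}u_{\pm,\lambda}$, i.e., $\frac{1}{2}a_\pm(y)\partial_{xx}u_{\pm,\lambda}=\lambda u_{\pm,\lambda}$, as required. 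The boundary values are $u_{\pm,\lambda}(0,y)=1$ and $\partial_x u_{\pm,\lambda}(0,y)=\mp\mu_\pm(y)$, so $\pm\partial_x u_{\pm,\lambda}(0,y)=-\mu_\pm(y)\le -\sqrt{2\lambda/c_2}=:c<0$, uniformly in $y$.

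The only nontrivial point is verifying that $y\mapsto\mu_\pm(y)$ is smooth enough so that $u_{\pm,\lambda}\in C_b^\infty$. Since $|\varphi|^2$ is Lipschitz in $y$ uniformly in $x$ and bounded away from $0$, the function $(x,y)\mapsto 1/|\varphi(x,y)|^2$ is Lipschitz in $y$ uniformly in $x$, and this Lipschitz regularity passes to the Cesaro average $1/a_\pm(y)$; composing with $\sqrt{2\lambda/\cdot}$ (smooth on the positive reals) then gives the same regularity for $\mu_\pm$. Under the smoothness of the coefficients effectively being used throughout the paper (the Lipschitz assumption in~1 together with the structural regularity implicit in applying Ito's formula), $\mu_\pm(y)$ inherits enough smoothness in $y$; if needed, one may replace $\mu_\pm$ by an arbitrary smooth bounded function pointwise equal to it, since only the values of $\mu_\pm(y)$ enter the equation (the PDE only involves $a_\pm$ through $\mu_\pm$). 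This smoothness question is the main obstacle, but it does not obstruct the argument: the explicit formula above yields the desired $u_{\pm,\lambda}$, and the bound $\pm\partial_x u_{\pm,\lambda}(0,y)\le c<0$ is uniform as established above.
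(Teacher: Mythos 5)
Your proof is essentially the same as the paper's: the paper simply sets
\[
u_{\pm,\lambda}(x,y)=e^{\mp\sqrt{2\lambda/a_\pm(y)}\,x}
\]
and declares that it satisfies all requirements, and this is exactly the formula you derive. Two small remarks. First, the cutoff $\chi$ is unnecessary: since $u_{\pm,\lambda}$ is only used on $H_\pm=\{\pm x>0\}$, the exponent $\mp\mu_\pm(y)x$ is already nonpositive there, so the function and every one of its $x$- and $y$-derivatives are bounded on $H_\pm$ without modification (including $\chi$ does no harm, but adds nothing). Second, your closing sentence, ``replace $\mu_\pm$ by an arbitrary smooth bounded function pointwise equal to it,'' is a logical slip: a function pointwise equal to $\mu_\pm$ \emph{is} $\mu_\pm$, and if you instead change it on a set of positive measure the ODE $\tfrac12 a_\pm\,u_{xx}=\lambda u$ is no longer satisfied. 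You are right that Lipschitz regularity of $\varphi$ in $y$ gives only Lipschitz (not $C^\infty$) regularity of $a_\pm(y)$, so $u_{\pm,\lambda}$ is strictly speaking not in $C_b^\infty(H_\pm)$; the paper silently ignores this. In practice only finitely many derivatives of the test function enter the estimates in Lemma \ref{lem:longtimeconv5}, so this is a mismatch between the stated hypothesis $D_i\subseteq C_b^\infty$ in Theorem \ref{thm:F-W-1993} and what is actually used, rather than a defect in the exponential formula itself.
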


\begin{proof}
  Set
  \[u_{\pm,\lambda}(x,y) = e^{\mp \sqrt{\frac{2\color{black}\lambda}{a_\pm(y)}}x}.\]
  This function\color{black}~ satisfies  all the requirements\color{black}. 
\end{proof}

\begin{Lemma}\label{lem:longtimeconv3}
 Let $\delta(\e)\to 0$. Then \eqref{eq:Property3} holds for $Z^\e$.
\end{Lemma}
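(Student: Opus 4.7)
The plan is to show that $\sup_{z}\mathbb{E}_z\int_0^\infty e^{-\lambda t}\chi_{(-\delta(\e),\delta(\e))}(\bar X^\e(t))\,dt$ is bounded by a constant times $\delta(\e)$, so that the conclusion follows from $\delta(\e)\to 0$. The argument has two ingredients: an occupation-time estimate on a fixed finite interval, uniform in the initial condition, and a geometric-series decomposition that converts this into a bound for the full half-line against the exponential weight.

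First, I would establish a $\bar X^\e$-analogue of Lemma \ref{lem:mylemma}: for every $\psi\in L^1(\reals)$ and every $T>0$,
\[
\sup_{z}\mathbb{E}_z\int_0^T |\psi|(\bar X^\e(r))\,dr \;\leq\; C\,|\psi|_{L^1(\reals)}\sqrt{T},
\]
with $C$ independent of $\e$. Since $\bar X^\e$ is a continuous martingale with $d\langle\bar X^\e\rangle_t=|\varphi|^2(\e^{-2}\bar X^\e(t),\bar Y^\e(t))\,dt$ and $c_1\leq|\varphi|^2\leq c_2$, the proof is a near-verbatim repetition of the one for Lemma \ref{lem:mylemma}: choose $f$ with $f''=|\psi|$ (so $f$ is Lipschitz with constant $|\psi|_{L^1}$), apply the Meyer-Tanaka formula to $f(\bar X^\e)$, use $|\varphi|^2\geq c_1$ to absorb the second-derivative term, and bound the resulting stochastic integrals by the BDG inequality. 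Specializing to $\psi=\chi_{(-\delta,\delta)}$ (so $|\psi|_{L^1}=2\delta$) and $T=1$ yields
\[
\sup_{z'}\mathbb{E}_{z'}\int_0^1\chi_{(-\delta,\delta)}(\bar X^\e(s))\,ds\;\leq\; 2C\delta.
\]

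Second, I would decompose $[0,\infty)$ into unit intervals $[k,k+1]$ and apply the Markov property of $\bar X^\e$ at each time $k$ to obtain
\[
\mathbb{E}_z\int_0^\infty e^{-\lambda t}\chi_{(-\delta,\delta)}(\bar X^\e(t))\,dt \;\leq\; \sum_{k=0}^\infty e^{-\lambda k}\sup_{z'}\mathbb{E}_{z'}\int_0^1\chi_{(-\delta,\delta)}(\bar X^\e(s))\,ds\;\leq\;\frac{2C\delta(\e)}{1-e^{-\lambda}}.
\]
The right-hand side is independent of $z$ and tends to zero as $\e\downarrow 0$, giving \eqref{eq:Property3}. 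There is no substantive obstacle here; the only point to verify is that the occupation-time bound in the first step is genuinely uniform in the starting point $z$, which is automatic because the Meyer-Tanaka argument controls $\mathbb{E}_z\int_0^T|\psi|(\bar X^\e)\,dr$ through the Lipschitz norm of $f$ and through $\mathbb{E}\langle\bar X^\e\rangle_T$, neither of which depends on the initial condition.
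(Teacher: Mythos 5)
Your proposal is correct and follows essentially the same route as the paper: decompose $[0,\infty)$ into unit intervals, bound the occupation of $(-\delta,\delta)$ on each interval uniformly in the starting point via the Meyer--Tanaka/Lemma~\ref{lem:mylemma} argument, and sum the geometric series. The paper simply cites Lemma~\ref{lem:mylemma} (which after rescaling by $\e^{-2}$ gives exactly your $\bar X^\e$-analogue), whereas you re-derive the estimate directly; the two are the same observation.
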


\begin{proof}
   First we observe that
   \[\E_z \int_0^\infty e^{-\lambda t} \chi_{(-\delta(\e),\delta(\e))}(\bar{X}^\e(t)) dt \leq \sum_{k=0}^\infty e^{-\lambda k} \E_z \int_{k}^{k+1} \chi_{(-\delta(\e),\delta(\e))}(\bar{X}^\e(t)) dt. \]
   By Lemma \ref{lem:mylemma}, there exists $c>0$ independent of $\e$ and $k$ such that
   \[\E_z \int_{k}^{k+1} \chi_{(-\delta(\e),\delta(\e))}(\bar{X}^\e(t)) dt \leq c |\chi_{(-\delta(\e),\delta(\e))}|_{L^1} \leq c \delta(\e).\]
   Therefore, for any $z \in \reals^{1+d}$,
   \[\E_z \int_0^\infty e^{-\lambda t} \chi_{(-\delta(\e),\delta(\e))}(\bar{X}^\e(t)) dt \leq \frac{c}{1-e^{-\lambda}} \delta(\e)\to 0\]
as $\e\downarrow 0$.
\end{proof}

\begin{Lemma}\label{lem:longtimeconv2}
  For the system $Z^\e$, $p_+=p_-=\frac{1}{2}$ in the sense that
  \begin{equation}
    \lim_{\e \to 0}\P(Z^\e( \theta^\delta) \in H_+) = \lim_{\e \to 0}\P(Z^\e( \theta^\delta) \in H_-) = \frac{1}{2}
  \end{equation}
  uniformly in $z \in \mathbb{R}^{1+d}$, $|x| < l(\e)$ as long as $l(\e)/\delta(\e)\to 0$.\color{black}
\end{Lemma}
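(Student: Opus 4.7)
The idea is to exploit the martingale structure of $\bar X^\e$ and apply optional stopping. Since
\[
d\bar X^\e(t)=\varphi(\e^{-2}\bar X^\e(t),\bar Y^\e(t))dW(t),
\]
$\bar X^\e$ is a continuous local martingale whose quadratic variation rate $|\varphi|^2$ lies in $[c_1,c_2]$. A time-change argument (as in the proof of Lemma \ref{lem:taudeltaestim}) shows that $\theta^\delta$ is a.s.\ finite with $\E_z \theta^\delta<\infty$. On $[0,\theta^\delta]$ the process $\bar X^\e$ is bounded by $\delta$ in absolute value, so the optional stopping theorem yields
\[
\E_z \bar X^\e(\theta^\delta)=x.
\]

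First I would argue that $\bar X^\e(\theta^\delta)\in\{-\delta,+\delta\}$. This follows from the continuity of $\bar X^\e$ and the definition of $\theta^\delta$ as the first time $|\bar X^\e|$ reaches $\delta$: by continuity the process cannot overshoot, so $|\bar X^\e(\theta^\delta)|=\delta$. Set
\[
p^\e_+(z):=\P_z(\bar X^\e(\theta^\delta)=+\delta),\qquad p^\e_-(z):=\P_z(\bar X^\e(\theta^\delta)=-\delta)=1-p^\e_+(z).
\]
Then the optional stopping identity above reads
\[
x=\delta\, p^\e_+(z)-\delta\, p^\e_-(z)=\delta\bigl(2p^\e_+(z)-1\bigr),
\]
so that
\[
p^\e_+(z)=\frac{1}{2}+\frac{x}{2\delta(\e)},\qquad p^\e_-(z)=\frac{1}{2}-\frac{x}{2\delta(\e)}.
\]

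For any initial point $z=(x,y)$ with $|x|<l(\e)$, the hypothesis $l(\e)/\delta(\e)\to 0$ yields $|x|/\delta(\e)<l(\e)/\delta(\e)\to 0$, so
\[
\sup_{|x|<l(\e)}\bigl|p^\e_\pm(z)-\tfrac{1}{2}\bigr|\le \frac{l(\e)}{2\delta(\e)}\longrightarrow 0,
\]
which is the required uniform convergence. Finally the event $\{Z^\e(\theta^\delta)\in H_\pm\}$ coincides (up to the null event that $\bar X^\e(\theta^\delta)=0$, which is excluded by the argument above) with $\{\bar X^\e(\theta^\delta)=\pm\delta\}$, so the same limit holds for $\P_z(Z^\e(\theta^\delta)\in H_\pm)$.

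There is no serious obstacle here; the only point requiring care is the justification of optional stopping, which is standard once one knows $\theta^\delta$ has finite mean and that the stopped martingale is bounded. Notice that this argument uses only the martingale nature of $\bar X^\e$ and the boundedness of $|\varphi|^2$ away from $0$ and $\infty$; it makes no use of the decay assumptions on $b_2$ or $\sigma$, nor of the Ces\`aro-limit structure. Consequently, the identical argument also yields $p_\pm\equiv 1/2$ for the processes $(X^\e,y,\zeta^\e)$ appearing in Theorems \ref{main_result2} and \ref{main_result1}.
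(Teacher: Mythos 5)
Your proof is correct and arrives at the same exit-probability formula $p_+^\e(x)=\frac12+\frac{x}{2\delta(\e)}$ as the paper; the paper justifies it by a time-change to Brownian motion (DDS) and quoting the standard Brownian exit probability, while you derive it directly by optional stopping of the bounded local martingale $\bar X^\e(t\wedge\theta^\delta)$ — these are two phrasings of the same underlying fact (the exit distribution of a one-dimensional continuous local martingale from an interval is determined by the starting point alone). Your remark that the argument transfers verbatim to the normalized-deviation cases is also what the paper observes.
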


\begin{proof}
Let
  \begin{equation*}
    \P_z(\bar{X}^\e( \theta^\delta) \in H_+) = f^\e(x,y)
  \end{equation*}
Since the exit only depends on the $x$ component, and the effect of $\varphi$ is a time change that does not effect the exit probabilities, it is easy to see that the corresponding Brownian formula holds, i.e.
  \[f^\e(x) = \frac{1}{2 \delta(\e)}(x+\delta(\e)) \]
and if $l(\e)\ll\delta(\e)$,
\[\lim_{\e \to 0} \sup_{|x|\leq l(\e)}\left|\P_{x}(\bar{X}^\e( \theta^\delta) \in H_+) - \frac{1}{2}\right|\to 0,	\]
as $\e\downarrow 0$. The part for $H_-$ follows by complementation.\color{black}
\end{proof}

\begin{Lemma}\label{lem:longtimeconv1}
  For $i,j=1,...,n$, let
  \begin{equation*}
    \beta_i(y) = \int_{-\infty}^\infty \frac{b_i}{|\varphi|^2}(x,y) dx,\qquad \alpha_{ij}(y) = \int_{-\infty}^\infty \frac{(\sigma\sigma^T)_{ij}}{|\varphi|^2}(x,y) dx.
  \end{equation*}
  Then if $\delta(\e),l(\e)\to 0$ such that $\delta(\e)/l(\e)\rightarrow\infty$, and $ \theta^\delta = \inf\{t>0: |X^\e(t)|>\delta\}$,
  \begin{equation*}
    \frac{1}{\delta}\E_z(\bar{Y}^\e_i( \theta^\delta) - y_i) \to \beta_i(y),\qquad     \frac{1}{\delta}\E_z(\bar{Y}^\e_i( \theta^\delta) - y_i)(\bar{Y}^\e_j( \theta^\delta) - y_j) \to \alpha_{ij}(y)
  \end{equation*}
  as $\e \to 0$ for $|x|< l(\e)$. Moreover,
  \begin{equation*}
 \sup_{|x|\leq l(\e)}\E|\bar{Y}^{\e}_i( \theta^\delta)-y|^3=o(\delta).
  \end{equation*}
\end{Lemma}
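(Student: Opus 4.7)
The plan is to represent each moment of $\bar Y^\e(\theta^\delta) - y$ via Ito's formula and evaluate the resulting drift integrals using auxiliary functions of the fast variable, mirroring the Ito trick from the proof of Lemma \ref{lem:mylemma}. The averages defining $\beta_i(y)$ and $\alpha_{ij}(y)$ will emerge from explicit boundary computations, weighted equally on the two sides of the interface by Lemma \ref{lem:longtimeconv2}.

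For the first moment, I would start from the identity
\[
\bar Y^\e_i(\theta^\delta) - y_i = \e^{-2}\int_0^{\theta^\delta}b_i(\e^{-2}\bar X^\e, \bar Y^\e)\,ds + \e^{-1}\int_0^{\theta^\delta}\sigma_i(\e^{-2}\bar X^\e, \bar Y^\e)\,dW(s),
\]
observe that the stochastic integral has zero expectation after a standard localization argument, freeze $y$ at the initial value, and introduce
\[
G^i_\e(x) := \int_0^x\int_0^{x'}\frac{2\, b_i(\e^{-2}\xi, y)}{|\varphi(\e^{-2}\xi, y)|^2}\,d\xi\,dx'
\]
so that $\tfrac{1}{2}|\varphi(\e^{-2}x, y)|^2 (G^i_\e)''(x) = b_i(\e^{-2}x, y)$. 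Ito's formula applied to $G^i_\e(\bar X^\e)$ then reduces the expected drift to $\e^{-2}\E[G^i_\e(\bar X^\e(\theta^\delta)) - G^i_\e(x)]$ plus an error from the substitution $|\varphi(\e^{-2}\bar X^\e, \bar Y^\e)|^2 \mapsto |\varphi(\e^{-2}\bar X^\e, y)|^2$. The change of variables $u = \e^{-2}\xi$ combined with $\hat b \in L^1$ and $|\varphi|^2 \geq c_1$ gives, uniformly in $|x|\le l(\e)$,
\[
G^i_\e(\pm\delta) = 2\e^2\delta\, \beta^\pm_i(y) + o(\e^2\delta),\qquad |G^i_\e(x)| = O(\e^2 l(\e)) = o(\e^2\delta),
\]
where $\beta^+_i(y) = \int_0^{\infty} \frac{b_i(u,y)}{|\varphi(u,y)|^2}du$ and $\beta^-_i(y) = \int_{-\infty}^{0} \frac{b_i(u,y)}{|\varphi(u,y)|^2}du$ sum to $\beta_i(y)$. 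Combined with the exit probabilities $\tfrac{1}{2}$ from Lemma \ref{lem:longtimeconv2}, this yields the first-moment claim.

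The second moment is handled analogously by Ito's product formula: the quadratic variation term $\e^{-2}(\sigma\sigma^T)_{ij}(\e^{-2}\bar X^\e, \bar Y^\e)\,ds$ is the leading contribution, while the cross terms $\E\int_0^{\theta^\delta}\e^{-2}(\bar Y^\e_j - y_j)b_i\,ds$ (and its symmetric counterpart) are $o(\delta)$ by Cauchy-Schwarz, using the estimate $\E\sup_{s\le\theta^\delta}|\bar Y^\e(s)-y|^2 \le C\delta$ (from Lemma \ref{lem:mylemma} and BDG applied to the drift and martingale pieces of $\bar Y^\e - y$) together with the $L^1$-bound on $\hat b$. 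The leading term is evaluated using $H^{ij}_\e$, defined as $G^i_\e$ but with $b_i$ replaced by $(\sigma\sigma^T)_{ij}$; the same boundary analysis, now using $\hat\sigma^2 \in L^1$, produces the limit $\alpha_{ij}(y)$. The third-moment bound follows from Lemma \ref{lem:mylemma} applied with $p=3$ to the drift, BDG combined with Lemma \ref{lem:mylemma} to the martingale, and the moment estimates $\E(\theta^\delta)^{3/2} \le C\delta^3$ and $\E(\theta^\delta)^{3/4} \le C\delta^{3/2}$ (from Lemma \ref{lem:taudeltaestim} and Jensen), giving $\E|\bar Y^\e(\theta^\delta) - y|^3 \le C\delta^{3/2} = o(\delta)$.

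The main obstacle is controlling the error from freezing $\bar Y^\e$ at $y$ in the auxiliary function: since $b_2$ is only globally Lipschitz in $y$ with no $L^1$-in-$x$ control on its $y$-derivative, the correction $\e^{-2}\E\int_0^{\theta^\delta}\hat b(\e^{-2}\bar X^\e)\,|\bar Y^\e - y|\,ds$ must be bounded via Cauchy-Schwarz against $\sqrt{\E\sup|\bar Y^\e-y|^2} = O(\sqrt\delta)$ and the Lemma \ref{lem:mylemma} estimate on the $\hat b$-integral, producing an $O(\e^{-1}\delta^{7/4})$ correction that is $o(\delta)$ for suitable choices of $\delta(\e)$ and $l(\e)$ compatible with the rate conditions of Theorem \ref{thm:F-W-1993}.
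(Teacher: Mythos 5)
Your proposal is correct, but it takes a genuinely different route to evaluate the leading drift integral. Both you and the paper start from the same identity
\[
\frac{1}{\delta}\E_z\bigl(\bar Y^\e_i(\theta^\delta)-y_i\bigr)
=\frac{1}{\delta\e^2}\E_z\int_0^{\theta^\delta}b_i(\e^{-2}\bar X^\e(s),\bar Y^\e(s))\,ds,
\]
and both freeze $y$ at its initial value with a Lipschitz/decay error estimate (yours via a single Cauchy--Schwarz against $\E\sup_s|\bar Y^\e-y|^2$; the paper's via the split $\hat b\chi_{\{|\e^{-2}\bar X^\e|\ge K\}}+c|\bar Y^\e-y|\chi_{\{|\bar X^\e|\le\e^2K\}}$ together with (5.15)). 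Where you diverge is the computation of the frozen-$y$ term. The paper rewrites it as an occupation-time integral $\frac{1}{\delta}\E_z\int\frac{b_i}{|\varphi|^2}(u,y)\,L^{\bar X^\e}(\e^2u,\theta^\delta)\,du$ and then uses the Tanaka identity $\E_z L^{\bar X^\e}(\e^2u,\theta^\delta)=\E_z|\bar X^\e(\theta^\delta)-\e^2u|-|x-\e^2u|\to\delta$ to pass to the limit by dominated convergence. You instead build the auxiliary function $G^i_\e$ solving $\tfrac12|\varphi(\e^{-2}\cdot,y)|^2(G^i_\e)''=b_i(\e^{-2}\cdot,y)$, apply It\^o's formula, and extract $\beta_i(y)$ from the boundary values $G^i_\e(\pm\delta)\sim 2\e^2\delta\,\beta^\pm_i(y)$ weighted by the exit probabilities $1/2$ of Lemma \ref{lem:longtimeconv2}; this is exactly the ``Poisson equation'' It\^o trick from Lemma \ref{lem:mylemma} pushed through to the boundary. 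Your route avoids local-time arguments but requires the dominated-convergence asymptotics of $G^i_\e(\pm\delta)$ as well as an extra substitution error, $|\varphi(\e^{-2}\bar X^\e,\bar Y^\e)|^2\mapsto|\varphi(\e^{-2}\bar X^\e,y)|^2$, which you correctly flag and which is of the same size as the $y$-freezing error. Conversely, the paper's local-time computation produces the constant $1$ in front of $\beta_i$ in one stroke from the Tanaka formula with no asymptotic expansion. Both arguments are sound; your second-moment and third-moment bounds match the paper's (with a few error exponents written conservatively --- e.g.\ the correct bound is $\E\sup_{s\le\theta^\delta}|\bar Y^\e(s)-y|^2\le C\delta^2$ rather than $C\delta$, and the stated $O(\e^{-1}\delta^{7/4})$ is not the tightest form --- but these slacker estimates are still $o(\delta)$ for the admissible choices of $\delta(\e)$ and $l(\e)$, so the conclusion is unaffected).
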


\begin{proof}
 Observe that
  \begin{equation}\label{eq:lem561}
    \frac{1}{\delta} \E_z(\bar{Y}^\e_i( \theta^\delta) - y_i) = \frac{1}{\delta\e^{2}} \E_z \int_0^{ \theta^\delta} b_i(\e^{-2}\bar{X}^\e(s),\bar{Y}^\e(s)) ds.
  \end{equation}
By Lemma \ref{lem:mylemma} and the Lipschitz continuity of $b$, and the fact that

\[|b_i(\e^{-2}\bar{X}^\e(t), \bar{Y}^\e(t)) - b_i(\e^{-2}\bar{X}(t),y)| \leq 2 \hat{b}(\e^{-2}\bar{X}(t)) \chi_{\{|\e^{-2}\bar{X}^\e(t)| \geq K\}} + c |\bar{Y}^\e(t) - y| \chi_{\{|\bar{X}^\e| \leq \e^{2}K\}}, \]
we have
\begin{align}\label{eq:lem562}
I(\delta,\e)=\frac{1}{\delta\e^2}\sup_{|x|\leq l(\e)}\left|\E_z \int_0^{ \theta^\delta} b_i(\e^{-2}\bar{X}^\e(s),\bar{Y}^\e(s)) ds- \E_z \int_0^{ \theta^\delta} b_i(\e^{-2}\bar{X}^\e(s), y)ds\right|\\
\nonumber\leq  \frac{2}{\delta}\sqrt{\E \theta^\delta}||\mathbbm{1}_{\{x\geq K\}}\hat{b}||_{L^1}+\frac{c}{\delta\e^2}\E_z\int_0^{ \theta^\delta}\mathbbm{1}_{\{|\bar{X}^{\e}(s)|<K\e^2\}}|\bar{Y}^{\e}(s)-y|ds.
\end{align}
Using $\E_z \theta^\delta\leq C\delta^2$ for all $|x|\leq l(\e)$, we see that the first term can be made less than $\eta/2$ for any $\eta>0$ if $K$ is chosen sufficiently large. Fix such a $K$ and note that by Cauchy-Schwarz inequality, the second term in (\ref{eq:lem562}) can be bounded from above by
\begin{align}\label{eq:lem563}
\frac{1}{\delta\e^2}\E_z&\left[\sup_{t\in[0, \theta^\delta]}|\bar{Y}^{\e}(t)-y|\int_0^{ \theta^\delta}\mathbbm{1}_{\{|\bar{X}^{\e}(s)|<K\e^2\}}ds\right]\leq \\
\nonumber&\leq\frac{1}{\delta}\left(\E_z\sup_{t\in[0, \theta^\delta]}|\bar{Y}^{\e}(t)-y|^2\right)^{1/2}\left[\E_z\left(\frac{1}{\e^2}\int_0^{ \theta^\delta}\mathbbm{1}_{\{|\bar{X}^{\e}(s)|<K\e^2\}}ds\right)^2\right]^{1/2}
\end{align}
where the second term is easily seen to be less than $c(\E \theta^\delta)^{1/2}\leq c\delta$ by Lemma \ref{lem:mylemma}.  On the other hand, $(a+b)^2\leq 2(a^2+b^2)$ and the BDG inequality imply that
\begin{align}\label{eq:lem564}
\E_z&\left(\sup_{t\in[0, \theta^\delta]}|\bar{Y}^{\e}(t)-y|^2\right)\leq\\
\nonumber &\leq 2\E\left(\frac{1}{\e^2}\int_0^{\theta}\hat{b}(\e^{-2}\bar{X}^{\e}(s))ds\right)^2+2c\E\left(\frac{1}{\e^2}\int_0^{\theta^{\delta}}\hat{\sigma}(\e^{-1}\bar{X}^{\e}(s))ds\right)\leq c\E_z \theta^\delta\leq c\delta^2,
\end{align}
where Lemma \ref{lem:mylemma} and Lemma \ref{lem:taudeltaestim} were used in the last two inequalities. By (\ref{eq:lem563}), and (\ref{eq:lem564}), the second term in (\ref{eq:lem562}) is less than $c\delta<\eta/2$ and consequently $I(\delta,\e)<\eta$  for sufficiently small $\delta$. Since $\eta$ was arbitrary, (\ref{eq:lem561}) implies that
  \begin{equation*}
    \frac{1}{\delta} \E_z(\bar{Y}^\e_i( \theta^\delta) - y_i) =  \frac{1}{\delta \e^2} \E_z \int_0^{ \theta^\delta} b_i(\e^{-2}\bar{X}^\e(s), y)ds+o(1).
  \end{equation*}
  By the properties of the local time, the above expression is equal to
  \begin{equation*} \label{eq:martingale-meyer-tanaka}
    \frac{1}{\e^2\delta} \E_z \int_{-\infty}^\infty \frac{b_i}{|\varphi|^2}(\e^{-2} u, y) L^{\bar{X}^\e}(u, \theta^\delta)du = \frac{1}{\delta} \E_z \int_{-\infty}^\infty \frac{b_i}{|\varphi|^2}(u,y) L^{\bar{X}^\e}(\e^2 u,  \theta^\delta) du
  \end{equation*}
  where $L^{\bar{X}^\e}(u,t)$ is the local time of $\bar{X}^\e$ at $u$.
  We calculate  by the Tanaka formula\color{black}~ that for fixed $x$,
  \begin{equation*}
    \frac{1}{\delta}\E_z L^{\bar{X}^\e}(\e^2 u,  \theta^\delta) = \frac{1}{\delta}\E \left(|\bar{X}^\e( \theta^\delta) - \e^2 u| - | x - \e^2 u| \right)
  \end{equation*}
  Because $\bar{X}^\e( \theta^\delta) = \pm \delta$, the above expression is bounded by
  \[\frac{\delta + |x|}{\delta}\]
  and if $|x|\leq l(\e) \ll\delta(\e)$,
  \[\lim_{\e \to 0} \frac{1}{\delta} \E_z L^{\bar{X}^\e}(\e u, \theta^\delta) = 1.\]
  Therefore, it follows from \eqref{eq:martingale-meyer-tanaka} and the dominated convergence theorem that
  \[\frac{1}{\delta} \E_z(\bar{Y}^\e_i( \theta^\delta) - y_i)  \to \int_{-\infty}^\infty \frac{b_i}{|\varphi|^2\color{black}}(x,y) dx.\]

  Similar calculations show that
  \[\frac{1}{\delta} \E_z(\bar{Y}^\e_i( \theta^\delta)-y_i)(\bar{Y}^\e_j( \theta^\delta)-y_j)=\frac{1}{\delta\e^2}\E_z\int_0^{ \theta^\delta}(\sigma\sigma^T)_{ij}(\e^{-2}\bar{X}^{\e}(t),\bar{Y}^{\e}(t))dt\to \int_{-\infty}^\infty \frac{(\sigma \sigma^T)_{ij}}{|\varphi|^2}(x,y) dx.\]
and a further straightforward application of Lemma \ref{lem:mylemma} yields
\[
 \sup_{|x|\leq l(\e)}\E|\bar{Y}^{\e}_i( \theta^\delta)-y|^3=\mathcal{O}(\delta^{3/2})=o(\delta).
\]
\end{proof}
}

Set $\gamma>0$, $l(\e)=\e^{2-\gamma}$ and $\delta(\e)=\max\{\e^{2-2\gamma},\e^{-\gamma}\sup_{|x|>l(\e)}| k(\e)|\}$  where $k(\e)$ is the convergence rate in Lemma \ref{lem:longtimeconv5}. Lemmas \ref{lem:longtimeconv5}-\ref{lem:longtimeconv1} and Theorem \ref{thm:F-W-1993} together with tightness imply that every subsequence of $(\bar{X}^\e,\bar{Y}^{\e})$ has a further subsequence converging to a solution of the martingale problem associated to $\bar{\mathcal{L}}$. In the next section we will argue that this martingale problem is well posed which establishes the convergence of the entire sequence.


\subsection{Convergence to the solution of the martingale problem in the normalized deviation case}
In this section, we show that the assumptions of Theorem \ref{thm:F-W-1993} are satisfied by $Z^\e=(X^\e,y,\zeta^\e)$ with $Z^{\e}(0)=z=(x,y,\xi)$ under the assumptions of Theorem \ref{main_result2} or Theorem \ref{main_result1}. We start with a simple growth estimate on $\zeta^{\e}$.

\begin{Lemma}
For any stopping time $\tau>0$ with $\E\tau<\infty$,
\begin{enumerate}
\item[(1)] under the conditions of Theorem \ref{main_result1}, we have
\begin{equation}\label{eq:supzeta}
\E_z\sup_{0\leq t\leq \tau}|\zeta^{\e}(t)|^2\leq C\left( \E_ze^{ 4||b_1||_{C^1}\tau}\right)^{1/2}(|\xi|^4+\E_z\tau^2)^{1/2},
\end{equation}
\item[(2)] under the conditions of Theorem \ref{main_result2}, we have
\begin{equation}\label{eq:supzeta2}
\E_z\sup_{0\leq t\leq \tau}|\zeta^{\e}(t)|^2\leq \left( \E_ze^{ 4||b_1||_{C^1}\tau}\right)^{1/2}(|\xi|^4+\E_z\tau+\e^2\E_z\tau^2)^{1/2}.
\end{equation}
\end{enumerate}
\end{Lemma}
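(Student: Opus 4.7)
The plan is a pathwise Gronwall argument on the integral equation for $\zeta^\e$, followed by Cauchy--Schwarz and Lemma \ref{lem:mylemma}. In case (1) ($\sigma \equiv 0$, $\zeta^\e = \e^{-1}(Y^\e - y)$), the mean value theorem applied to $b_1$ yields a bounded process $B^\e$ with $\|B^\e\|_\infty \leq \|b_1\|_{C^1}$ such that $\e^{-1}[b_1(Y^\e(s)) - b_1(y(s))] = B^\e(s)\zeta^\e(s)$, and the differential equations for $Y^\e$ and $y$ give
\[
\zeta^\e(t) = \xi + \int_0^t B^\e(s)\zeta^\e(s)\,ds + \e^{-1}\int_0^t b_2(\e^{-1}X^\e(s),Y^\e(s))\,ds.
\]
Setting $N(\tau) := \e^{-1}\int_0^\tau \hat b(\e^{-1}X^\e(s))\,ds$, the triangle inequality and the classical Gronwall lemma applied to $t\mapsto \sup_{0\leq s\leq t}|\zeta^\e(s)|$ produce the pathwise bound $\sup_{0\leq s\leq \tau}|\zeta^\e(s)|\leq (|\xi| + N(\tau))\,e^{\|b_1\|_{C^1}\tau}$. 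Squaring, applying Cauchy--Schwarz to decouple the exponential factor from the algebraic one, and invoking Lemma \ref{lem:mylemma} at $p=4$ with $\psi = \hat b \in L^1(\reals)$ to bound $\E_z N(\tau)^4 \leq C_4|\hat b|_{L^1}^4 \E_z\tau^2$ gives (\ref{eq:supzeta}).

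In case (2) ($\zeta^\e = \e^{-1/2}(Y^\e - y)$), the identical decomposition produces an extra stochastic integral $M^\e(t) := \int_0^t \e^{-1/2}\sigma(\e^{-1}X^\e(s),Y^\e(s))\,dW(s)$, and the drift perturbation becomes $\e^{-1/2}\int_0^\tau \hat b(\e^{-1}X^\e)\,ds$. Redo Gronwall with $N(\tau)$ replaced by $N_2(\tau) := \sup_{0\leq s\leq \tau}|M^\e(s)| + \e^{-1/2}\int_0^\tau \hat b(\e^{-1}X^\e)\,ds$. The drift contribution at the fourth power equals $\e^2 \E_z|\e^{-1}\int_0^\tau \hat b|^4 \leq C\e^2|\hat b|_{L^1}^4 \E_z\tau^2$ by Lemma \ref{lem:mylemma}. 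For the martingale piece, the BDG inequality at exponent $4$ together with Lemma \ref{lem:mylemma} at $p = 2$ applied to $\hat\sigma^2 \in L^1(\reals)$ yields
\[
\E_z \sup_{0\leq s\leq \tau}|M^\e(s)|^4 \leq c\,\E_z\Big(\e^{-1}\int_0^\tau \hat\sigma^2(\e^{-1}X^\e)\,ds\Big)^2 \leq C|\hat\sigma^2|_{L^1}^2 \E_z\tau,
\]
accounting for the additional $\E_z\tau$ term in (\ref{eq:supzeta2}) on top of the $\e^2\E_z\tau^2$ contribution from the drift.

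The main technicality is that $\tau$ is a general stopping time, so a direct invocation of Lemma \ref{lem:mylemma} requires the moments $\E_z\tau^{p/2}$ to be finite. This is handled by the standard localization: apply the argument to the bounded stopping time $\tau\wedge n$ and pass to the limit via monotone convergence on the left-hand side and Fatou on the right. If $\E_z\tau^2 = \infty$ or $\E_z e^{4\|b_1\|_{C^1}\tau} = \infty$ the asserted bound is vacuous and there is nothing to prove. Beyond this, pairing the $\e^{-1/2}$ rescaling of case (2) with the fourth-moment form of Lemma \ref{lem:mylemma} is what produces the $\e^2\E\tau^2$ scaling of the drift term, while the second-moment application to the martingale's quadratic variation produces the separate $\E\tau$ contribution; keeping these two scalings straight is the only place where care is needed.
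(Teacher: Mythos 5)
Your argument coincides with the paper's proof: a pathwise Gronwall estimate on $|\zeta^\e(t)|$ (with the martingale piece absorbed into the non-decreasing term in case (2)), Cauchy--Schwarz to split off the exponential factor, and then Lemma \ref{lem:mylemma} at $p=4$ for the drift integral and at $p=2$ (after BDG) for the quadratic variation of the stochastic integral. Your localization remark and the observation that the bound is vacuous when $\E_z\tau^2=\infty$ or $\E_z e^{4\|b_1\|_{C^1}\tau}=\infty$ address a technicality the paper passes over silently, and your use of $\hat\sigma^2$ (which is the $L^1$ function) in the final application of Lemma \ref{lem:mylemma} is the correct reading of what appears to be a typo ($\hat\sigma$) in the displayed inequality in the paper.
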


\begin{proof}
Recall that under the conditions of Theorem \ref{main_result1}, we have
\[
d\zeta^{\e}(t)=\frac{1}{\e}[b_1(y(t)+\e\zeta^{\e}(t))-b_1(y(t))]+\frac{1}{\e}b_2\left(\e^{-1}X^{\e}(t),y(t)+\e\zeta^{\e}(t)\right)dt,
\]
which implies
\[
|\zeta^{\e}(t)|\leq |\xi|+||b_1||_{C^1}\int_0^t|\zeta^{\e}(s)|ds+\int_0^t\e^{-1}\hat{b}(\e^{-1}X^{\e}(s))ds.
\]
By Gronwall's lemma,
\[
\sup_{0\leq t\leq \tau}|\zeta^{\e}(t)|\leq\left( |\xi|+\int_0^{\tau}\e^{-1}\hat{b}(\e^{-1}X^\e(s))ds\right)e^{||b_1||_{C^1}\tau},
\]
and therefore by the Cauchy-Schwartz inequality,
\[
\E_z\sup_{t\in[0,\tau]}|\zeta^{\e}(t)|^2\leq C\left( \E_ze^{ 4||b_1||_{C^1}\tau}\right)^{1/2}\left(|\xi|^{ 4}+\E_z\left(\int_0^{\tau}\e^{-1}\hat{b}(\e^{-1}X^{\e}(t))dt\right)^{ 4}\right)^{1/2},
\]
and Lemma \ref{lem:mylemma} and Lemma \ref{lem:taudeltaestim} yield (1).

To prove (2), note that under the conditions of Theorem \ref{main_result2},
\begin{align*}
d\zeta^{\e}(t)=\frac{1}{\sqrt{\e}}[b_1(y(t)+\sqrt{\e}\zeta^{\e}(t))-b_1(y(t))]+\frac{1}{\sqrt{\e}}b_2\left(\e^{-1}X^{\e}(t),y(t)+\sqrt{\e}\zeta^{\e}(t)\right)dt+\\
+\frac{1}{\sqrt{\e}}\sigma\left(\e^{-1}X^{\e}(t),y(t)+\sqrt{\e}\zeta^{\e}(t)\right)dW(t)
\end{align*}
and one can easily see as above that
\begin{align*}
\E_z\sup_{t\in[0,\tau]}&|\zeta^{\e}(t)|^2\leq C\left( \E_ze^{ 4||b_1||_{C^1}\tau}\right)^{1/2}\cdot\\
&\cdot\left(|\xi|^{ 4}+\e^2\E_z\left(\int_0^{\tau}\e^{-1}\hat{b}(\e^{-1}X^{\e}(t))dt\right)^{ 4}+\E_z\left(\int_0^\tau\e^{-1}\hat{\sigma}(\e^{-1}X^{\e}(s))ds\right)^2\right)^{1/2},
\end{align*}
where we used the BDG inequality in the last term. (2) once again follows by applying Lemma \ref{lem:mylemma} twice and Lemma \ref{lem:taudeltaestim}.
\end{proof}

\begin{Lemma}\label{lem:devcase1}
There is a $\lambda_0>0$ such that for $\lambda>\lambda_0$, if we choose $l(\e)$ so that $l(\e) \to 0$ and $\e^{-1} l(\e) \to +\infty$, then \eqref{eq:Property1} is satisfied with some rate $k(\e)$ for $Z^\e(t)= (X^\e(t),y(t), \zeta^\e(t))$ with
  \[\bar{\mathcal{L}} f(x,y,\xi) = \frac{1}{2}a_{\pm}(x,y) f_{xx}(x,y,\xi) + b_1(y)\cdot f_y(x,y,\xi) + (\partial_y b_1(y) \cdot \xi)\cdot f_\xi(x,y,\xi).\]
\end{Lemma}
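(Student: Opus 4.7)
The plan is to apply Ito's formula to $e^{-\lambda t}f(Z^\e(t))$ on the random interval $[0,\tau^\e]$, take expectations (standard localization kills the stochastic integrals), and rewrite the left side of \eqref{eq:Property1} as
\[
\E_z\int_0^{\tau^\e}e^{-\lambda t}(\mathcal{L}^\e f-\bar{\mathcal{L}}f)(Z^\e(t))\,dt,
\]
where $\mathcal{L}^\e$ is the generator of $Z^\e$. Writing $Y^\e=y+\sqrt{\e}\zeta^\e$ and using $d\langle X^\e\rangle_t=|\varphi|^2 dt$, $d\langle\zeta^\e\rangle_t=\e^{-1}\sigma\sigma^T dt$ and $d\langle X^\e,\zeta^\e\rangle_t=\e^{-1/2}\varphi\sigma^T dt$, the integrand $\mathcal{L}^\e f-\bar{\mathcal{L}}f$ decomposes into five pieces: (a) the Ces\`aro averaging error $\tfrac12(|\varphi|^2(\e^{-1}x,y)-a_\pm(x,y))f_{xx}$; (b) the Taylor remainder $[\e^{-1/2}(b_1(y+\sqrt{\e}\zeta)-b_1(y))-\partial_y b_1(y)\zeta]\cdot f_\xi$; (c) the low-order perturbation $\e^{-1/2}b_2(\e^{-1}x,Y^\e)\cdot f_\xi$; (d) the blowing-up variance term $\tfrac{1}{2\e}\Tr(\sigma\sigma^T f_{\xi\xi})$; and (e) the mixed bracket $\e^{-1/2}\varphi\sigma^T\cdot f_{x\xi}$. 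I would bound the contribution of each term separately.

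For (c), (d), and (e) the key observation is that $t<\tau^\e$ implies $|X^\e(t)|>l(\e)$, so after the change of variable $u=\e^{-1}x$ the relevant $L^1$ mass of $\hat b$ and $\hat\sigma^2$ sits in $\{|u|>\e^{-1}l(\e)\}$, which is a vanishing tail as $\e\downarrow 0$. Slicing $\int_0^{\tau^\e}e^{-\lambda t}\cdot dt$ over unit intervals $[k,k+1]$ and applying Lemma \ref{lem:mylemma} with $p=1$ on each slice produces
\[
\e^{-1}\E_z\int_0^{\tau^\e}e^{-\lambda t}\hat b(\e^{-1}X^\e(t))\,dt\leq \frac{C}{1-e^{-\lambda}}\bigl\|\hat b\,\mathbbm{1}_{|u|>\e^{-1}l(\e)}\bigr\|_{L^1(\reals)}\to 0,
\]
and similarly with $\hat\sigma^2$ in place of $\hat b$. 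These handle (c) (with an extra $\sqrt{\e}$ factor) and (d) directly; for (e), Cauchy--Schwarz in $(\omega,t)$ against the measure $e^{-\lambda t}dt$ reduces it to the geometric mean of a bounded term and the estimate for (d). Term (a) is the most delicate, since Lemma \ref{lem:conv_of fast_motion} does not apply verbatim ($f_{xx}$ depends on $\zeta^\e$, which itself depends on $\e$). I would instead re-run the proof of that lemma: construct an auxiliary $u^\e(x,y,\xi)$ solving $|\varphi(\e^{-1}x,y)|^2 u^\e_{xx}=f_{xx}(x,y,\xi)(|\varphi|^2-a_\pm)$ with vanishing Cauchy data at $x=0$, observe via Lemma \ref{lem:Cesaro-times-f} that $u^\e\to 0$ uniformly on compact sets of $(x,y,\xi)$, and apply Ito to $u^\e(Z^\e(t))$ so that the integral in (a) becomes a boundary value of $u^\e$ plus lower-order remainders already controlled as in (c)--(e).

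Finally, for (b), Taylor's theorem applied to $b_1\in C^2$ with bounded derivatives gives the pointwise bound $|\e^{-1/2}(b_1(y+\sqrt{\e}\zeta)-b_1(y))-\partial_y b_1(y)\zeta|\leq C\sqrt{\e}\,|\zeta|^2$, and \eqref{eq:supzeta2} provides a constant $c_0>0$ (depending on $\|b_1\|_{C^1}$) such that $\E_z|\zeta^\e(t)|^2\leq C(|\xi|^4+t+\e^2 t^2)^{1/2}e^{c_0 t}$. Setting $\lambda_0:=c_0$ and taking any $\lambda>\lambda_0$, Fubini yields $\sqrt{\e}\E_z\int_0^\infty e^{-\lambda t}\|f_\xi\|_\infty|\zeta^\e(t)|^2\,dt=O(\sqrt{\e})$, and collecting (a)--(e) produces the required rate $k(\e)\to 0$. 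The main obstacles I expect are twofold: the $\zeta^\e$-dependence in (a), which requires a hands-on adaptation of the argument behind Lemma \ref{lem:conv_of fast_motion} rather than a black-box invocation; and the unboundedness of $\zeta^\e$ in (b), which both compels the threshold $\lambda_0$ and creates tension with the uniform-in-$z$ rate required by \eqref{eq:Property1}, a tension one would resolve by exploiting the decay of $f$ and its derivatives at infinity in $\xi$.
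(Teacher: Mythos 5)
Your proposal follows the paper's proof closely: the paper groups your pieces (a) and (c)--(e) into a single term $I_1(z,\e)$ and disposes of it by pointing to the arguments in Lemma~\ref{lem:longtimeconv5}, while it treats your piece (b) as $I_2(z,\e)$ using the same Taylor estimate on $b_1\in C^2_b$, the same invocation of \eqref{eq:supzeta2}, and the same choice of $\lambda_0$ as the exponential growth constant $C_2$. The two caveats you flag are genuine and the paper does not spell them out. For (a), Lemma~\ref{lem:conv_of fast_motion} cannot be cited verbatim: the auxiliary function in its proof must now depend on $\xi$ through $f_{xx}$, and applying Ito to $u^\e(Z^\e(t))$ produces extra terms driven by the $\zeta^\e$-dynamics (drift of order $\e^{-1/2}$, quadratic variation of order $\e^{-1}$) that must be reabsorbed using the same $L^1$-tail estimates you use for (c)--(e); one must also reconcile that $|\varphi|^2$ is evaluated at $Y^\e = y + \sqrt{\e}\zeta^\e$ while $a_\pm$ is evaluated at $(X^\e, y)$, which costs a Lipschitz-in-$y$ hypothesis on $a_\pm$ or an extra $O(\sqrt\e|\zeta^\e|)$ remainder. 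For (b), the paper quotes $\E_z|\zeta^\e(t)|^2 \le C_1 t e^{C_2 t}$, silently dropping the $|\xi|^4$ contribution in \eqref{eq:supzeta2}; since \eqref{eq:Property1} is required uniformly over $z \in H_i$ (and is indeed applied at $Z^\e(\sigma_n)$ with arbitrary $\zeta$-coordinate in the proof of Theorem~\ref{thm:F-W-1993}), your observation that some decay or cutoff in $\xi$ is needed is correct, though note that $D_i \subseteq C_b^\infty(H_i)$ allows bounded test functions without a decay rate, so the resolution needs a little more care than you sketch.
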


\begin{proof}
 We only prove the lemma under the hypothesis of Theorem \ref{main_result2}, the other case being almost identical. Using Ito's formula,
\[
\E_z\left[e^{-\lambda\tau^{\e}}f(Z^{\e}(\tau^{\e}))-f(z)+\int_0^{\tau^{\e}}e^{-\lambda t}(\lambda f(Z^{\e}(t))-\bar{\mathcal{L}}_if(Z^{\e}(t)))dt\right]=I_1(z,\e)+I_2(z,\e)
\]
where $I_1(z,\e)$ contains terms that can be dealt with exactly as we did with their counterparts in Lemma \ref{lem:longtimeconv5} and therefore
\[
\sup_{ z\in H_i }|I_1(z,\e)|\rightarrow 0.
\]
On the other hand,
\[
I_2(z,\e)=\frac{1}{2}\E_z\int_0^{\tau^{\e}}e^{-\lambda t}f_{xx}(X^{\e}(t),y(t),\zeta^{\e}(t))\left[\frac{b_1(y(t) +\sqrt{\e}\zeta^\e(t))-b_1(y(t))}{\sqrt{\e}}-(\partial_yb_1)(y(t))\zeta^{\e}(t)\right]dt,
\]
and because $b_1(y)\in C_{ b}^{2}(\mathbb{R}^{d})$, we have that
\begin{equation}\label{eq:iketto}
|I_2(z,\e)|\leq C\sqrt{\e}\int_0^{\infty}e^{-\lambda t}\E_z|\zeta^{\e}(t)|^2dt,
\end{equation}
where the constant $C$ depeds on $|f|_{C^2}$ and the second derivatives of $b_1$.  By (\ref{eq:supzeta2}), we have for all $t>0$
\[
\E_z|\zeta^{\e}(t)|^2\leq C_1te^{C_2t}
\]
for some $C_1,C_2>0$ and therefore choosing $\lambda_0=C_2$ makes the integral in (\ref{eq:iketto}) converge for every $\lambda>\lambda_0$ and the result is proved.
\end{proof}

\begin{Lemma} \label{lem:eigenfunction-deviation-case}
  For each $\lambda>0$, and $i=+,-$, there is a $u_{i,\lambda} \in D_i$ such that
  \[\bar{\mathcal{L}}_i u_{i,\lambda} = \lambda u_{i,\lambda}.\]
\end{Lemma}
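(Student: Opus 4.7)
The plan is to construct $u_{\pm,\lambda}$ by a Feynman--Kac formula, refining the explicit exponential of Lemma \ref{lem:longtimeconv4}. Consider the auxiliary Markov process $(\hat X(t),\hat y(t),\hat\zeta(t))$ on $H_+$ with generator $\bar{\mathcal{L}}_+$, started from $(x,y,\xi)$. Because $\bar{\mathcal{L}}_+$ has no diffusion in the $y$ or $\xi$ directions, its $\hat y$-component is the deterministic flow $\hat y(t)=\Phi(t;y)$ of $\dot y=b_1(y)$, its $\hat\zeta$-component is a deterministic function of $(\hat y,\xi)$, and only $\hat X(t)=x+\int_0^t\sqrt{a_+(\Phi(s;y))}\,dW(s)$ is random. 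Set $\tau_0:=\inf\{t\geq 0:\hat X(t)=0\}$ and observe that $\tau_0$ depends only on $(x,y)$. Define
\[
 u_{+,\lambda}(x,y,\xi) := \E\,e^{-\lambda\tau_0}.
\]

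The function $u_{+,\lambda}$ is independent of $\xi$, takes values in $[0,1]$, and equals $1$ on the interface $\{x=0\}$. The standard Dynkin argument (apply It\^o's formula to $e^{-\lambda t}u_{+,\lambda}(\hat X(t),\hat y(t))$ up to $\tau_0$, take expectations, and invoke the strong Markov property) gives $\bar{\mathcal{L}}_+u_{+,\lambda}=\lambda u_{+,\lambda}$ on $H_+$; the $\xi$-transport term in $\bar{\mathcal{L}}_+$ is automatically annihilated since $u_{+,\lambda}$ does not depend on $\xi$. To make this concrete and extract smoothness, invoke Dambis--Dubins--Schwarz to write $\hat X(t)-x=\tilde B(A(t;y))$, where $\tilde B$ is a standard Brownian motion and $A(t;y)=\int_0^t a_+(\Phi(s;y))\,ds$. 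Then $\tau_0=A^{-1}(\tilde\tau_{-x};y)$ with $\tilde\tau_{-x}=\inf\{s:\tilde B(s)=-x\}$, yielding
\[
 u_{+,\lambda}(x,y) = \int_0^\infty e^{-\lambda A^{-1}(s;y)}\,\frac{x}{\sqrt{2\pi s^3}}\,e^{-x^2/(2s)}\,ds.
\]
Smoothness in $(x,y)$ follows by differentiation under the integral: the bounds $c_1 t\leq A(t;y)\leq c_2 t$ make $A^{-1}(\,\cdot\,;y)$ globally smoothly defined, and smoothness of the flow $\Phi$ in its initial datum yields the $y$-regularity. For the boundary conditions needed in Condition 2 of Theorem \ref{thm:F-W-1993}, Brownian scaling $\tilde\tau_{-x}\stackrel{d}{=}x^2\tilde\tau_{-1}$ combined with the expansion $A^{-1}(s;y)=s/a_+(y)+O(s^2)$ as $s\downarrow 0$ gives $u_{+,\lambda}(x,y)=1-\sqrt{2\lambda/a_+(y)}\,x+O(x^2)$, so $\partial_x u_{+,\lambda}(0,y)=-\sqrt{2\lambda/a_+(y)}\leq -\sqrt{2\lambda/c_2}<0$ uniformly in $y$. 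A symmetric construction produces $u_{-,\lambda}$ on $H_-$.

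The principal technical obstacle is regularity: $\bar{\mathcal{L}}_\pm$ is highly degenerate (no diffusion in $y$ or $\xi$), so naive interior elliptic regularity does not apply. This is circumvented by the factored, deterministic structure of the $(\hat y,\hat\zeta)$-dynamics, through which all the $y$-dependence of $u_{\pm,\lambda}$ enters via the smooth ODE flow $\Phi$, and the $\xi$-dependence is absent entirely; hence no hypoelliptic regularity theory is needed.
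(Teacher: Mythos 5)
Your construction coincides with the paper's: both define $u_{i,\lambda}(z)=\E_z e^{-\lambda\tau}$ with $\tau$ the hitting time of $\{x=0\}$ for the diffusion generated by $\bar{\mathcal{L}}_i$, and both obtain the derivative condition at the interface via a time change to Brownian motion and the classical formula $\E e^{-\mu\tilde\tau_{-1}}=e^{-\sqrt{2\mu}}$. Your version adds a worthwhile clarification: the paper describes $\mathcal{L}_\pm$ as generating a \emph{non-degenerate} diffusion, whereas it is in fact degenerate in the $y$ and $\zeta$ directions; your observation that $u_{i,\lambda}$ is $\xi$-independent and that the $(y,\zeta)$-dynamics reduce to a smooth deterministic flow, so that all the needed regularity follows from the explicit integral representation via Dambis--Dubins--Schwarz rather than from elliptic (or hypoelliptic) estimates, supplies the justification the paper leaves implicit.
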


\begin{proof}
  The operators $\mathcal{L}_\pm$ are both generators of non-degenerate diffusion processes $Z=(X,y,\zeta)$ in $H_\pm$. Define
  \[\tau=\inf\{t>0: X(t)=0\},\]
  then it is well known that
  \begin{equation}
    u_{i,\lambda}(z) := \E_{z} e^{-\lambda \tau}
  \end{equation}
  has the property we desire. That the requirement on the derivative is also fulfilled once again follows from a time change argument and the corresponding Brownian formula.
\end{proof}

The statements of the following lemma are proved completely analogously to those in the previous section.
\begin{Lemma}
  Property \eqref{eq:Property3} holds for $Z^\e$. Also, $p_+=p_-=\frac{1}{2}$ in the sense that
  \[\lim_{\e \to 0} \P_z \left( Z^\e(\tau^\e) \in H_+\right) = \lim_{\e \to 0} \P_z \left( Z^\e(\tau^\e) \in H_-\right) = \frac{1}{2}\]
  uniformly for $(x,y,z) \in \reals^{1 + 2d}$, $|x|<l(\e)$.
\end{Lemma}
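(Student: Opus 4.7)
The plan is to imitate Lemmas \ref{lem:longtimeconv3} and \ref{lem:longtimeconv2} almost verbatim, adjusting only the bookkeeping that comes from the different scaling of the fast argument ($\e^{-1}X^\e$ here versus $\e^{-2}\bar{X}^\e$ in the long-time regime). The two claims are essentially independent, so I would handle them separately.

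For property \eqref{eq:Property3}, I would write $\chi_{(-\delta(\e),\delta(\e))}(x) = \psi_\e(\e^{-1}x)$ with $\psi_\e := \chi_{(-\delta(\e)/\e,\,\delta(\e)/\e)}$, which satisfies $|\psi_\e|_{L^1(\reals)} = 2\delta(\e)/\e$. Then applying Lemma \ref{lem:mylemma} with $p=1$ on each interval $[k,k+1]$ would give
\[
\E_z \int_k^{k+1} \chi_{(-\delta(\e),\delta(\e))}(X^\e(t))\,dt \;=\; \e \cdot \E_z\left| \frac{1}{\e} \int_k^{k+1} \psi_\e(\e^{-1}X^\e(t))\,dt\right| \;\leq\; c\,\delta(\e),
\]
uniformly in $k \in \nat$ and in the initial condition $z$. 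Splitting the integral on $[0,\infty)$ into the pieces on $[k,k+1]$ and summing against $e^{-\lambda k}$ would then yield
\[
\sup_z \E_z \int_0^{\infty} e^{-\lambda t} \chi_{(-\delta(\e),\delta(\e))}(X^\e(t))\,dt \;\leq\; \frac{c\,\delta(\e)}{1-e^{-\lambda}} \;\longrightarrow\; 0,
\]
giving \eqref{eq:Property3}.

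For the exit probabilities, the key observation is that the event $\{Z^\e(\theta^\delta)\in H_\pm\}$ depends only on the sign of $X^\e(\theta^\delta)$, since $H_\pm = \{\pm x>0\}$ does not involve $y$ or $\zeta$. I would then note that $X^\e$ is a continuous local martingale whose quadratic variation $T(t) := \int_0^t |\varphi|^2(\e^{-1}X^\e(s),Y^\e(s))\,ds$ satisfies $c_1 t \leq T(t) \leq c_2 t$ by Assumption 3, and by the Dambis--Dubins--Schwarz theorem there exists a Brownian motion $B$ with $X^\e(t) = x + B(T(t))$. Because $T$ is strictly increasing, the first exit point of $X^\e$ from $(-\delta(\e),\delta(\e))$ coincides with that of $x+B(\cdot)$, so the elementary Brownian formula gives
\[
\P_z\bigl(X^\e(\theta^\delta)=\pm\delta(\e)\bigr) \;=\; \frac{\delta(\e)\pm x}{2\delta(\e)},
\]
which converges uniformly to $1/2$ as $\e\to 0$ provided $|x| < l(\e)$ and $l(\e)/\delta(\e)\to 0$.

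No serious obstacle is expected: both inputs (Lemma \ref{lem:mylemma} and the DDS representation) are directly at hand thanks to the uniform ellipticity of $\varphi$, and the decoupling of the exit event from $y$ and $\zeta$ makes the presence of these extra coordinates irrelevant. The only mild point worth flagging is that the constant in Lemma \ref{lem:mylemma} is independent of the initial condition $z=(x,y,\xi)$, which must be verified from its proof, but this is immediate since all the bounds there go through $|\varphi| \leq c_2$ and the BDG inequality, neither of which involves $z$.
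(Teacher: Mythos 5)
Your proof is correct and follows exactly the route the paper intends: the paper itself disposes of this lemma with a one-line remark that it is proved ``completely analogously'' to Lemmas \ref{lem:longtimeconv3} and \ref{lem:longtimeconv2}, and your argument carries out that analogy with the right bookkeeping (the fast argument scales as $\e^{-1}X^\e$ rather than $\e^{-2}\bar{X}^\e$, so $|\psi_\e|_{L^1}=2\delta(\e)/\e$ and the factors of $\e$ cancel correctly, and the DDS time change is precisely what the paper invokes when it says ``the effect of $\varphi$ is a time change that does not affect the exit probabilities''). The only small point to note is that the $\tau^\e$ appearing in the lemma statement should be read as $\theta^\delta$, as you correctly did, consistent with Condition 4 of Theorem \ref{thm:F-W-1993} and with Lemma \ref{lem:longtimeconv2}.
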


\begin{Lemma}\label{lem:devcase5}
  For $i,j=1,..,n$, let
  \begin{equation*}
   \beta_i(y) = \begin{cases}\int_{-\infty}^\infty \frac{b_{2,i}}{|\varphi|^2}(x,y) dx & \text{ if } \sigma \equiv 0,
   \\ 0 & \text{ otherwise,}
   \end{cases}
\qquad
    \alpha_{ij}(y) = \int_{-\infty}^\infty \frac{(\sigma\sigma^T)_{ij}}{|\varphi|^2}(x,y) dx.
  \end{equation*}
  Then if $\delta(\e)/l(\e)\to\infty$,
  \begin{equation}\label{eq:normdevgluing}
    \frac{1}{\delta} \E_z(\zeta^\e_i({ \theta^\delta}) -\xi_i\color{black}) \to \beta_i(y),\qquad
    \frac{1}{\delta} \E_z (\zeta^\e_i({ \theta^\delta}) - \xi_i\color{black})(\zeta^\e_j({ \theta^\delta}) - \xi_j\color{black}) \to \alpha_{ij}(y)
  \end{equation}
 as $\e \to 0$ uniformly in $|x|<l(\e)$. Moreover,
\begin{equation}\label{eq:normdevthirdterm}
\E_z|\zeta_i^{\e}({ \theta^\delta})-\xi|^3=o(\delta).
\end{equation}
\end{Lemma}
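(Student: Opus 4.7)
\textbf{Proof plan for Lemma \ref{lem:devcase5}.}

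The overall strategy is to write $\zeta^\e(\theta^\delta)-\xi$ as an explicit sum of Riemann and Ito integrals over $[0,\theta^\delta]$, then bound each piece using the moment estimates for $\theta^\delta$ from Lemma \ref{lem:taudeltaestim} together with the integral functional estimates of Lemma \ref{lem:mylemma} and the occupation-time arguments in the style of Lemma \ref{lem:longtimeconv1}. I will focus on the case of Theorem \ref{main_result2} (general $\sigma$); the $\sigma\equiv 0$ case of Theorem \ref{main_result1} is handled identically but with the stochastic integral absent and the $\e^{-1/2}$ prefactor replaced by $\e^{-1}$, which shifts the leading contribution from the martingale term to the $b_2$-drift term and accounts for the different formulas for $\beta$ and $\alpha$.

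Writing out the SDE for $\zeta^\e$, for Theorem \ref{main_result2} we have
\begin{align*}
\zeta^\e_i(\theta^\delta)-\xi_i &= \int_0^{\theta^\delta}\tfrac{b_{1,i}(y(t)+\sqrt{\e}\zeta^\e(t))-b_{1,i}(y(t))}{\sqrt{\e}}\,dt \\
&\quad + \e^{-1/2}\int_0^{\theta^\delta} b_{2,i}(\e^{-1}X^\e(t),Y^\e(t))\,dt + \e^{-1/2}\int_0^{\theta^\delta}\sigma_{i\cdot}(\e^{-1}X^\e(t),Y^\e(t))\,dW(t).
\end{align*}
For the first moment, the $b_1$-term is Lipschitz-controlled by $\|b_1\|_{C^1}\int_0^{\theta^\delta}|\zeta^\e(t)|\,dt$; combining Cauchy--Schwarz, \eqref{eq:supzeta2}, and $\E\theta^\delta\leq C\delta^2$ shows this is $O(\delta^2)=o(\delta)$ uniformly in $|x|<l(\e)$ (assuming $\xi$ varies in a compact set, which is where the martingale problem is tested). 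The stochastic integral has mean zero. For the $b_2$-integral, I will first freeze $Y^\e$ at $y$, using Lipschitz continuity of $b_2$ and the same bound $\E\sup|Y^\e-y|^2=O(\e)$ that entered Lemma \ref{lem:Y_close_to_y}, to replace it by $\e^{-1/2}\int_0^{\theta^\delta} b_{2,i}(\e^{-1}X^\e(t),y)dt$ up to negligible error; the factor $\sqrt{\e}$ in front then forces this contribution to $\beta_i=0$ in the $\sigma\not\equiv 0$ case. In the $\sigma\equiv 0$ case of Theorem \ref{main_result1}, the same manipulation produces an overall prefactor of $1$ and yields the $\beta_i(y)=\int(b_{2,i}/|\varphi|^2)(x,y)dx$ limit.

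The central calculation, shared by the mean formula (Theorem \ref{main_result1}) and the second moment (Theorem \ref{main_result2}), is the local-time conversion. After freezing $Y^\e$ at $y$, the occupation-time formula gives, for any $\psi\in L^1(\R)$,
\begin{equation*}
\int_0^{\theta^\delta}\tfrac{\psi(\e^{-1}X^\e(t))}{|\varphi(\e^{-1}X^\e(t),y)|^2}\,d\langle X^\e\rangle_t
= \int_\R \tfrac{\psi(\e^{-1}u)}{|\varphi(\e^{-1}u,y)|^2}L^{X^\e}(\theta^\delta,u)\,du
= \e\int_\R\tfrac{\psi(v)}{|\varphi(v,y)|^2}L^{X^\e}(\theta^\delta,\e v)\,dv.
\end{equation*}
By Tanaka's formula, $\E L^{X^\e}(\theta^\delta,\e v)=\E|X^\e(\theta^\delta)-\e v|-|x-\e v|$, which for $|x|\leq l(\e)\ll\delta$ and fixed $v$ yields $\E L^{X^\e}(\theta^\delta,\e v)/\delta\to 1$. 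Dominated convergence (the local times are bounded by $2\delta$ in expectation) then delivers the required limits $\alpha_{ij}$ and $\beta_i$. Applying this to $\psi=(\sigma\sigma^T)_{ij}$ (via Ito isometry on the stochastic integral) handles the second-moment statement in Theorem \ref{main_result2}; in Theorem \ref{main_result1} the quadratic term is trivially $o(\delta)$ because, by Lemma \ref{lem:mylemma} with $p=2$, $\E(\e^{-1}\int_0^{\theta^\delta}\hat b\,dt)^2\leq C\E\theta^\delta\leq C\delta^2$. The cross terms with the $b_1$-drift contribute $o(\delta)$ by Cauchy--Schwarz.

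The third-moment bound is a direct application of the BDG inequality together with Lemma \ref{lem:mylemma} at $p=3/2$ and $p=3$: the stochastic term satisfies $\E|\e^{-1/2}\int\sigma\,dW|^3\leq C\E(\e^{-1}\int\hat\sigma^2 dt)^{3/2}\leq C\delta^{3/2}=o(\delta)$, and the $b_2$-drift term is similarly controlled. The main obstacle in executing this plan is keeping the freezing errors (replacing $Y^\e$ by $y$ inside $b_2$, $\sigma\sigma^T$, and $|\varphi|^2$) uniformly $o(\delta)$ while simultaneously tracking that the $\zeta^\e$-dependence inside $b_1$ enters the first moment only at order $\delta^2$; this is where the moment bounds \eqref{eq:supzeta}--\eqref{eq:supzeta2} combined with Lemma \ref{lem:mylemma} do the real work, and one must verify that none of the constants blow up as $|\xi|$ remains in a bounded region of test points for the martingale problem.
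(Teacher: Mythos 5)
Your proposal is correct and follows essentially the same route as the paper: decompose $\zeta^\e(\theta^\delta)-\xi$ via the SDE, kill the $b_1$-increment by Cauchy--Schwarz together with the bound $\E_z\sup_{t\le\theta^\delta}|\zeta^\e|^2\le M$ and Lemma~\ref{lem:taudeltaestim}, freeze $Y^\e$ at $y$ inside $b_2$ and $\sigma\sigma^T$, and evaluate the surviving term by the occupation-time/Tanaka argument borrowed from Lemma~\ref{lem:longtimeconv1}; the third moment is handled by BDG and Lemma~\ref{lem:mylemma} exactly as you say. The paper writes out the $\sigma\equiv 0$ case and declares the other similar, whereas you do the $\sigma\not\equiv 0$ case, but the computations are interchangeable, so this is the same proof.
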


\begin{proof}
We only prove the case $\sigma\equiv 0$, the other one being similar. First note that by (\ref{eq:supzeta}) and Lemma \ref{lem:taudeltaestim}, there is a $\delta_0>0$ and $M>0$ such that
\begin{equation}\label{eq:supzetabounded}
\E_z\sup_{t\in[0,{ \theta^\delta}]}|\zeta^{\e}(t)|^2\leq M,
\end{equation}
whenever $\delta<\delta_0$.
Now write
\begin{align}\label{eq:bdrdrift}
 \frac{1}{\delta} &\E_z(\zeta^\e_i({ \theta^\delta}) -\xi_i)=\\
\nonumber&=\frac{1}{\delta\e}\E_z\int_0^{{ \theta^\delta}}\left[b_1(y(s)+\e\zeta^{\e}(u))-b_1(y(s))\right]dt+\frac{1}{\delta\e}\E_z\int_0^{{ \theta^\delta}}b_2(\e^{-1}X^{\e}(s),y(s)+\e\zeta^{\e}(s))ds
\end{align}
The second term can be dealt with exactly as in (\ref{eq:lem562}) and (\ref{eq:lem563}) and it can be shown to converge to
\[
\beta(y)=\int_{-\infty}^{\infty}\frac{b_2}{|\varphi|^2}(x,y)dx
\]
uniformly in $|x|<l(\e)$ once one notices that
\[
\E_z\sup_{t\in[0,{ \theta^\delta}]}|y(t)+\e\zeta^{\e}(t)-y|^2\leq C\E_z\sup_{t\in[0,{ \theta^\delta}]}|y(t)-y|^2+\e^2\E_z\sup_{t\in[0,{ \theta^\delta}]}|\zeta^{\e}(t)|^2=o(1)
\]
by (\ref{eq:supzetabounded}), a standard Gronwall estimate on $y(t)$, and Lemma \ref{lem:taudeltaestim}.
On the other hand, the integral in the first term in (\ref{eq:bdrdrift}) is immediately less than or equal to
\[
\frac{||b_1||_{C^1}}{\delta}\E_z\int_0^{{ \theta^\delta}}|\zeta^{\e}(t)|dt\leq\frac{||b_1||}{\delta}\E_z\left[{ \theta^\delta}\cdot\sup_{t\in[0,{ \theta^\delta}]}|\zeta^{\e}(t)|\right]\leq\left(\E_z\sup_{t\in[0,{ \theta^\delta}]}|\zeta^{\e}(t)|^2\right)^{1/2}\sqrt{\delta^{-2}\E_z({ \theta^\delta})^2}
\]
which converges to zero by (\ref{eq:supzeta}) and Lemma \ref{lem:taudeltaestim}.

The second claim in (\ref{eq:normdevgluing}) and (\ref{eq:normdevthirdterm}) can be proved similarly.
\re
\end{proof}

If we set $\gamma>0$, $l(\e)=\e^{1-\gamma}$ and $\delta(\e)=\max\{\e^{1-2\gamma},\e^{-\gamma}k(\e)\}$ then Lemmas \ref{lem:devcase1}-\ref{lem:devcase5} and Theorem \ref{thm:F-W-1993} with $(y,\xi)$ in place of $y$ show that the possible subsequential limits are all solutions of the martingale problem associated to $\bar{\mathcal{L}}$ and the convergence will be established once we show well-posedness in the next section.


\section{Uniqueness and Markov property of the martingale problem and characterization of the solution.}\label{sec:uniqmarting}

In this section, we prove that the martingale problems associated to the operators in Theorem \ref{main_result2}, Theorem \ref{main_result1} and Theorem \ref{main_result3} are well-posed and the unique solutions satisfy the stated SDEs.

Our main tool will be the following result which is a special case of a result of Ethier and Kurtz which we reformulate for our current purposes.

 \begin{Theorem}[{\cite[Theorem 4.4.1]{EK}} Uniqueness of the Martingale Problem] \label{EK-uniqueness}
   Let $\mathcal{L}$ be linear and dissipative on $C_0(\mathbb{R}^{d+1})$. If $\overline{D(\mathcal{L})} = C_0(\mathbb{R}^{d+1})$ and there exists a $\lambda>0$ such that $\overline{(\lambda - \mathcal{L})(D(\mathcal{L}))} = C_0(\mathbb{R}^{d+1})$, then the martingale problem is unique for $\mathcal{L}$. That is, if $Z, \tilde{Z}$ are processes in $C([0,\infty),\mathbb{R}^{1+d})$ and $Z(0) = \tilde{Z}(0)$ in distribution and for any $f \in D$,
   \[f(Z(t)) - \int_0^t Lf(Z(s))ds \text{ and } f(\tilde{Z}(t)) - \int_0^t Lf(\tilde{Z}(s))ds,\]
  are martingales with respect to the filtration generated by the coordinate process then $Z$ and $\tilde{Z}$ are equal in distribution. Moreover, if there is a solution $Z$, it is a Markov process corresponding to the semigroup on $C_0(\mathbb{R}^{d+1})$ generated by the closure of $L$.
 \end{Theorem}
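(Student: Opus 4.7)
The plan is to apply the Hille-Yosida generation theorem to produce a strongly continuous contraction semigroup from $\overline{\mathcal{L}}$, and then to show that the one-dimensional distributions of any martingale-problem solution are uniquely determined by that semigroup. The three hypotheses on $\mathcal{L}$ — linearity, dissipativity, density of $D(\mathcal{L})$, and density of the range $(\lambda-\mathcal{L})(D(\mathcal{L}))$ for some $\lambda>0$ — are exactly the premises of the Lumer-Phillips form of Hille-Yosida, so $\overline{\mathcal{L}}$ generates a $C_0$-contraction semigroup $\{T_t\}_{t\geq 0}$ on $C_0(\mathbb{R}^{d+1})$ with resolvent $R_\lambda = (\lambda - \overline{\mathcal{L}})^{-1}$ satisfying $R_\lambda g = \int_0^\infty e^{-\lambda t} T_t g\,dt$ for all $g \in C_0(\mathbb{R}^{d+1})$. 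Moreover, the range condition implies that every $g \in C_0$ equals $(\lambda - \overline{\mathcal{L}})f$ for a unique $f \in D(\overline{\mathcal{L}})$, and $f$ can be approximated by $f_n \in D(\mathcal{L})$ with $\mathcal{L}f_n \to \overline{\mathcal{L}}f$ uniformly.

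Now let $Z$ be any solution of the martingale problem. For each $f \in D(\mathcal{L})$, taking expectations of the martingale identity $f(Z(t)) = f(Z(0)) + \int_0^t \mathcal{L}f(Z(s))ds + M_f(t)$, multiplying by $\lambda e^{-\lambda t}$, integrating in $t$, and applying Fubini yields
\[
\mathrm{E}\int_0^\infty e^{-\lambda t}(\lambda f - \mathcal{L}f)(Z(t))\,dt = f(Z(0)).
\]
Writing $g = (\lambda - \mathcal{L})f$, this reads $\mathrm{E}\int_0^\infty e^{-\lambda t} g(Z(t))dt = R_\lambda g(Z(0))$ whenever $g$ lies in the range of $\lambda - \mathcal{L}$, and a dominated-convergence argument along the approximating sequence $f_n$ (controlled uniformly via the contraction $\|R_\lambda g\| \leq \lambda^{-1}\|g\|$) extends it to all $g \in C_0(\mathbb{R}^{d+1})$. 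Since $R_\lambda g = \int_0^\infty e^{-\lambda t} T_t g\,dt$ as well, injectivity of the Laplace transform on the continuous bounded function $t \mapsto \mathrm{E} g(Z(t)) - T_t g(Z(0))$ forces $\mathrm{E} g(Z(t)) = T_t g(Z(0))$ for every $t\geq 0$ and every $g \in C_0$, pinning down the one-dimensional marginals of $Z$ from $Z(0)$.

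The same derivation, carried out after conditioning on $\mathcal{F}_s$ and using the martingale property of $M_f$ past time $s$, produces $\mathrm{E}[g(Z(t))\mid \mathcal{F}_s] = T_{t-s} g(Z(s))$ for $s\leq t$. This simultaneously yields the Markov property with transition semigroup $\{T_t\}$ and, by iteration together with the monotone class theorem, uniqueness of the finite-dimensional distributions of $Z$ given its initial distribution, completing the proof.

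The main technical obstacle is the passage from the original domain $D(\mathcal{L})$ to the closure $D(\overline{\mathcal{L}})$: a priori the martingale $M_f$ exists only for $f \in D(\mathcal{L})$, so the resolvent identity must be extended by approximation, and this is precisely where density of both $D(\mathcal{L})$ and $(\lambda - \mathcal{L})(D(\mathcal{L}))$ enters essentially. Everything else in the argument reduces to standard facts about $C_0$-semigroups and Laplace transforms.
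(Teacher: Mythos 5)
The paper does not prove this theorem; it is imported verbatim as Theorem~4.4.1 from Ethier--Kurtz, so there is no internal proof to compare against. What you have written is, nevertheless, a correct reconstruction of the standard Ethier--Kurtz argument: (i) Lumer--Phillips/Hille--Yosida produces the contraction semigroup $T_t$ generated by $\overline{\mathcal{L}}$; (ii) integrating the martingale identity against $\lambda e^{-\lambda t}$ and applying Fubini gives $\mathrm{E}\int_0^\infty e^{-\lambda t}(\lambda - \mathcal{L})f(Z(t))\,dt = \mathrm{E}f(Z(0))$; (iii) density of $(\lambda - \mathcal{L})(D(\mathcal{L}))$, uniform boundedness of $R_\lambda$, and dominated convergence extend this to $\mathrm{E}\int_0^\infty e^{-\lambda t} g(Z(t))\,dt = \mathrm{E}R_\lambda g(Z(0))$ for all $g\in C_0$; (iv) injectivity of the Laplace transform on bounded continuous functions pins down the one-dimensional marginals; (v) conditioning on $\mathcal{F}_s$ and iterating yields the Markov property and uniqueness of finite-dimensional distributions. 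This is exactly the structure of the cited proof. Two small points to tighten: the right-hand side of your first display should be $\mathrm{E}f(Z(0))$ rather than $f(Z(0))$ unless you first condition on the initial point, which is the cleaner convention; and step (iv) tacitly requires the resolvent identity for a full interval of $\lambda$, which is legitimate because once $\overline{\mathcal{L}}$ is known to generate, $(\lambda' - \mathcal{L})(D(\mathcal{L}))$ is automatically dense for every $\lambda' > 0$, and the derivation in (ii) is $\lambda$-independent --- it is worth saying this explicitly. The conditional version in (v) also needs a remark that $t\mapsto \mathrm{E}[g(Z(t))\mid\mathcal{F}_s]$ is a.s.\ continuous (conditional dominated convergence plus path continuity of $Z$), which is the point Ethier--Kurtz handle via their Theorem~4.4.2 reduction from one-dimensional to finite-dimensional uniqueness; stating this makes the Laplace inversion in the conditional step rigorous.
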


In  all three \re cases, we probabilistically construct a solution of the corresponding stochastic differential equation and conclude by the Hille-Yosida theorem that the generator of the associated Markov semigroup satisfies the hypothesis of Theorem \ref{EK-uniqueness}. This together with  Lemma \ref{lem:itm} implies that the measure generated by this process is the unique solution of the martingale problem associated to $\bar{\mathcal{L}}$.
\subsection{Long-time case}
{
By the concrete form of $\beta$ and $\alpha$ (as in Theorem \ref{main_result3}), there is a weak solution $(Y_0,W_2)$ to the equation
\[
dY_0(t)=\beta(Y_0(t))dt+\sqrt{\alpha(Y_0(t))}dW_2(t).
\]
Take an independent Brownian moton $W_1$ and consider the time changed process
\[
\hat{Z}(t) = (\hat{X}(t), \hat{Y}(t))=(W_1(t),Y_0(L^{W_1}(t,0)))
\]
where $L^{W_1}(t,0)$ is the local time of $W_1$ at zero. Finally, set
\[
t(s)=\int_0^s\frac{1}{a_{\pm}(W_1(u),\hat{Y}(u))}du
\]
and let $s(t)$ denote its inverse. Let $Z(t)=\hat{Z}(s(t))$.

\begin{Lemma}
$Z(t)=:(\bar{X}^0(t),\bar{Y}^{0}(t))$ is a weak solution of (\ref{eq:longlimX})-(\ref{eq:longlimY}).
\end{Lemma}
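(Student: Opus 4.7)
The plan is to unwind the two-step construction --- first build $(Y_0, W_2)$ and an independent Brownian motion $W_1$, next compose $Y_0$ with the local time of $W_1$ at zero, and finally time-change by $s(t)$ --- then translate each piece into the corresponding integral in \eqref{eq:longlimX}--\eqref{eq:longlimY} via standard time change rules for continuous semimartingales.

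First I would handle $\bar{X}^0(t) = W_1(s(t))$. Differentiating the identity $t(s(t)) = t$ gives $s'(t) = a_\pm(\bar{X}^0(t), \bar{Y}^0(t))$, so $\bar{X}^0$ is a continuous local martingale with $\langle \bar{X}^0 \rangle_t = s(t) = \int_0^t a_\pm(\bar{X}^0(u), \bar{Y}^0(u))\,du$. A martingale representation on a possibly enlarged probability space then yields a one-dimensional Brownian motion $W$ for which \eqref{eq:longlimX} holds. Next, I would identify $\ell(t) := L^{\bar{X}^0}(t,0)$ with $L^{W_1}(s(t),0)$: applying the occupation time formula to both $\bar{X}^0$ and $W_1$ together with the substitution $v=s(u)$ (so that $dv = a_\pm\,du = d\langle \bar{X}^0\rangle_u$) shows that
\[
\int_{\mathbb{R}} f(x)\,L^{\bar{X}^0}(t,x)\,dx = \int_0^{s(t)} f(W_1(v))\,dv = \int_{\mathbb{R}} f(x)\,L^{W_1}(s(t),x)\,dx
\]
for every bounded Borel $f$, whence $L^{\bar{X}^0}(t,\cdot) = L^{W_1}(s(t),\cdot)$ as measures in $x$. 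In particular $\bar{Y}^0(t) = Y_0(\ell(t))$.

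Finally I would feed the SDE for $Y_0$ through this composition. The Stieltjes change of variables $\rho = \ell(u)$, valid because $\ell$ is continuous and non-decreasing, rewrites the drift contribution as $\int_0^{\ell(t)} \beta(Y_0(\rho))\,d\rho = \int_0^t \beta(\bar{Y}^0(u))\,dL^{\bar{X}^0}(u,0)$. For the stochastic term I would define $V^{\bar{X}^0}(t) := W_2(\ell(t))$; since $W_2$ is independent of $W_1$ (and hence of $W$) and $\ell$ is continuous and non-decreasing, $V^{\bar{X}^0}$ is a continuous martingale (in the filtration $\mathcal{G}_t := \mathcal{F}^{W_1}_{s(t)} \vee \mathcal{F}^{W_2}_{\ell(t)}$) with $\langle V^{\bar{X}^0}_i, V^{\bar{X}^0}_j\rangle_t = \delta_{ij}\,\ell(t) = \delta_{ij}\,L^{\bar{X}^0}(t,0)$ and $\langle V^{\bar{X}^0}_i, W\rangle \equiv 0$, exactly the variation structure required by \eqref{eq:SDE_X}--\eqref{eq:SDE_Y}. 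The classical time change formula for stochastic integrals against continuous martingales then gives $\int_0^{\ell(t)}\sqrt{\alpha(Y_0(\rho))}\,dW_2(\rho) = \int_0^t \sqrt{\alpha(\bar{Y}^0(u))}\,dV^{\bar{X}^0}(u)$, so that \eqref{eq:longlimY} holds and the quadruple $(\bar{X}^0,\bar{Y}^0,W,V^{\bar{X}^0})$ is a weak solution.

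The main delicacy I anticipate is that $\ell$ is typically not strictly increasing --- it is flat on the open set $\{u : \bar{X}^0(u)\neq 0\}$ and carries all of its variation on the Cantor-like zero set of $\bar{X}^0$. This rules out the naive inverse-function change-of-variables argument, so I would invoke the general time change results for continuous local martingales (e.g.\ Kallenberg, Theorem 17.24) that accommodate non-strictly-monotone time changes. The only other technical point is choosing a filtration in which all of $W$, $V^{\bar{X}^0}$, $\bar{X}^0$ and $\bar{Y}^0$ are adapted and the martingale properties are preserved; this is handled by the enlargement $\mathcal{G}_t$ above, which is built from the natural filtrations of the two independent driving Brownian motions evaluated at the respective (adapted) time changes.
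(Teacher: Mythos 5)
Your proposal is correct and follows essentially the same route as the paper: time-change $W_1$ by $s(t)$, identify $L^{\bar{X}^0}(t,\cdot)$ with $L^{W_1}(s(t),\cdot)$ via the occupation-time formula, define $V^{\bar{X}^0}(t)=W_2(L^{\bar{X}^0}(t,0))$, and verify the required (cross-)variation structure. The only cosmetic difference is that the paper builds $W$ explicitly as $W(t)=\int_0^t a_\pm^{-1/2}(\bar{X}^0(u),\bar{Y}^0(u))\,W_1(s(du))$ and applies L\'evy's characterization rather than appealing to a martingale representation theorem; since $a_\pm$ is bounded away from zero, no enlargement of the probability space is actually needed at that step.
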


\begin{proof}
First note that if we introduce $V^{W_1}(t)=W_2(L^{W_1}(t,0))$ then $\hat{Y}$ satisfies
\[
\hat{Y}(t)=\int_0^t\beta(\hat{Y}(s))L^{W_1}(ds,0)+\int_0^t\sqrt{\alpha(\hat{Y}(s))}dV^{W_1}(t)
\]
Note that $V^{W_1}$ is a continuous martingale and
\[
\P\left(\sum_{i,j=1}^d \int_0^t\alpha_{ij}(\hat{Y}(s))d\langle V_j^{ W_1},V_i^{ W_1}\rangle(s)<\infty\right)=1
\]
for all $t>0$ and therefore the stochastic integral is well defined for all times. { Indeed, it follows from the construction that $\left<V_i^{ W_1}, V_j^{ W_1}\right> = L^{W_1}(t,0) \delta_{ij}$. We also calculate that by \eqref{eq:decay_cond}
  \[\sum_{i=1}^d \alpha_{ii}(y) = \int_{-\infty}^\infty \frac{\Tr(\sigma\sigma^T)}{|\varphi|^2}(x,y) dx \leq c|\hat{\sigma}|_{L^2(\reals)}^2.\]
  Therefore,
  \[\sum_{i,j=1}^d \int_0^t \alpha_{ij}(\hat{Y}(s)) d \left<V_j^{ W_1},V_i^{ W_1}\right>(s) \leq c|\hat{\sigma}|_{L^2(\reals)}^2 L^{W_1}(t,0).\]}
 which is finite with probability 1. Now performing the second time change yields
\begin{align*}
\bar{X}^0(t)&=W_1(s(t))\\
\bar{Y}^0(t)&=\int_0^t\beta(\bar{Y}^0(u))L^{W_1}(s(du),0)+\int_0^t\alpha(\bar{Y}^0(u))V^{W_1}(s(du))
\end{align*}
Note that for any Borel measurable $f:\mathbb{R}\to[0,\infty)$,
\begin{align*}
\int_{\mathbb{R}}f(x)L^{W_1}(s(t),x)dx=\int_0^{s(t)}f(W_1(u))du=\int_0^tf(W_1(s(u)))a_{\pm}(W_1(s(u)),\hat{Y}(s(u)))du=\\
=\int_0^tf(\bar{X}^0(u))a_{\pm}(\bar{X}^0(u ),\bar{Y}^0(u))du=\int_{\mathbb{R}}f(x)L^{\bar{X}^0}(t,x)dx,
\end{align*}
which implies $L^{W_1}(s(t),x)=L^{\bar{X}^0}(t,x)$. This also implies that if we let $V^{\bar{X}^0}(t)=V^{W_1}(s(t))$, then we have
\[
\langle V^{\bar{X}^0}_i, V^{\bar{X}^0}_j\rangle(t)=\langle V^{W_1}_i, V^{W_1}_j\rangle(s(t))=\delta_{ij}L^{W_1}(s(t),x)=\delta_{ij}L^{\bar{X}^0}(t,x).
\]
This means that
\[
\bar{Y}^0(t)=\int_0^t\beta(\bar{Y}^0(u))L^{\bar{X}^0}(du,0)+\int_0^t\alpha(\bar{Y}^0(u))dV^{\bar{X}^0}(u)
\]
as required. We also have that the martingale
\[
W(t):=\int_0^t\frac{1}{\sqrt{a_{\pm}(\bar{X}^0(u),\bar{Y}^0(u))}}W_1(s(du))
\]
has quadratic variation $t$ and therefore by Levy's theorem it is a Brownian motion. Consequently,
\[
\bar{X}^0(t)=\int_0^t\sqrt{a_{\pm}(\bar{X}^0(u),\bar{Y}^0(u))}dW(u).
\]

It remains to show that the appropriate cross-variations vanish. Since $\langle W_1, W_2\rangle\equiv 0$, it is easy to see that $\langle W_1,V_i^{W_1}\rangle_t\equiv 0$. As $s'(t)\in [c_1,c_2]$, $\langle \bar{X}^0, V_i^{\bar{X}^0}\rangle_t=\langle W_1,V_i^{W_1}\rangle_{s(t)}\equiv 0$.  Moreover,
\[
\langle W,V_i^{\bar{X}^0}\rangle_t=\int_0^t\frac{1}{\sqrt{a_{\pm}(\bar{X}^0(u),\bar{Y}^0(u))}}d\langle \bar{X}^0,V_i^{\bar{X}^0}\rangle_u\equiv 0.
\]
\end{proof}

\noindent\textit{Proof of Theorem \ref{main_result3}.} As a Markov process, $Z(t)$ generates a strongly continuous contraction semigroup $T_t$ on $C_0(\mathbb{R}^{1+d})$ by $T_tf(z)=\E_zf(Z(t))$. From this definition and Lemma \ref{lem:itm}, it is easy to see that the closure of $\bar{\mathcal{L}}$ (denoted by $\bar{\mathcal{L}}$ again) is an extension of the generator of $T_t$ and hence, by a standard semigroup fact, they are actually equal. Thus, by the Hille Yosida theorem, $\mathcal{D}(\bar{\mathcal{L}})$ is dense in $C_0(\mathbb{R}^{1+d})$ and $\mathcal{R}(\lambda-\bar{\mathcal{L}})=C_0(\mathbb{R}^{1+d})$. Therefore by Theorem \ref{EK-uniqueness}, the associated martingale problem is well-posed. It is straightforward to show using Lemma \ref{lem:itm}, that $Z(t)$ is actually a solution of the martingale problem. This, combined with the tightness result of Section 4 and the convergence result of Section 5, finishes the proof of Theorem \ref{main_result3}.\qed

\subsection{Normalized deviation case}
Similarly to the previous case, we are going to directly construct the limiting Markov process which solves the martingale problem associated to $\mathcal{L}$. We are only going to carry out the construction, the other steps are analogous.
\begin{Lemma}
There is a process $X^0$ which solves
  \[dX^0(t) = \sqrt{a_\pm(X^0(t),y(t))}dW(t).\]
  This $X^0$ solves the martingale problem associated to $\bar{\mathcal{L}}$ for functions $f$ that do not depend on $y$ or $\zeta$.
\end{Lemma}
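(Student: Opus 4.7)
The plan is to build $X^0$ by a pathwise time-change of a standard Brownian motion, then to read off the martingale property from the It\^o--Meyer--Tanaka formula, using the fact that on the narrow class of test functions appearing in the statement the gluing condition forces the local time contribution at $0$ to disappear.

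First I would fix a Brownian motion $W_1$ with $W_1(0)=x_0$ on some filtered probability space, and consider the Carath\'eodory ODE
\[ \tau'(t) = a_\pm\bigl(W_1(\tau(t)),\, y(t)\bigr), \qquad \tau(0)=0. \]
Since $a_\pm$ takes values in $[c_1,c_2]$, the right-hand side is bounded and is continuous in $\tau$ off the set where $W_1(\tau)=0$, which has Lebesgue measure zero almost surely by the Brownian occupation time formula. This yields a unique absolutely continuous solution $\tau$, strictly increasing with $\tau' \in [c_1,c_2]$. I then set $X^0(t):=W_1(\tau(t))$. A direct computation shows that $X^0$ is a continuous martingale with $\langle X^0\rangle_t = \tau(t)=\int_0^t a_\pm(X^0(s),y(s))\,ds$, and by L\'evy's theorem the process
\[ W(t) := \int_0^t \frac{1}{\sqrt{a_\pm(X^0(s),y(s))}}\,dX^0(s) \]
is a Brownian motion, so that $dX^0(t)=\sqrt{a_\pm(X^0(t),y(t))}\,dW(t)$.

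For the martingale property, I would take $f\in \mathcal{D}(\bar{\mathcal L})$ depending only on $x$. Since $f$ is constant in $y$ and $\zeta$ and $\beta(y)=0$, every $\alpha_{ij}\partial_{y_iy_j}f$ and $\beta_i\partial_{y_i}f$ term vanishes, so the gluing condition \eqref{eq:glueing_cond} collapses to $f_x(0+)=f_x(0-)$, while $\bar{\mathcal L}f(x,y,\zeta)=\tfrac12 a_\pm(x,y)f''(x)$. Because $f'$ is now continuous on $\mathbb{R}$ and $f''$ is bounded and piecewise continuous, the It\^o--Meyer--Tanaka formula applied to the semimartingale $X^0$ (or equivalently Lemma \ref{lem:itm} specialized to $\alpha\equiv 0$, $\beta\equiv 0$, $V^{X^0}\equiv 0$) gives
\[ f(X^0(t)) - f(x_0) = \int_0^t f'(X^0(s))\sqrt{a_\pm(X^0(s),y(s))}\,dW(s) + \tfrac12\int_0^t a_\pm(X^0(s),y(s))\,f''(X^0(s))\,ds, \]
with the local time contribution $\tfrac12\bigl(f'(0+)-f'(0-)\bigr)L^{X^0}(t,0)$ absent precisely because of the gluing condition. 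The stochastic integral is a genuine martingale by boundedness, and the drift term is exactly $\int_0^t \bar{\mathcal L}f(X^0(s),y(s),\zeta)\,ds$.

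I expect the main technical point to be the first step: verifying well-posedness of the defining ODE despite the discontinuity of $a_\pm$ along $\{x=0\}$. A clean alternative is to bypass the pathwise construction altogether and invoke a weak-existence result for one-dimensional SDEs with bounded, uniformly elliptic, measurable dispersion (Engelbert--Schmidt, or a standard Stroock--Varadhan martingale-problem argument for the frozen-time equation combined with a time-inhomogeneous lift via the deterministic $y(t)$), which produces the same $X^0$. Once $X^0$ is in hand, the martingale verification is a direct application of It\^o--Meyer--Tanaka.
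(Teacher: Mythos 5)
Your primary construction has a gap at the very first step. The ODE $\tau'(t)=a_\pm(W_1(\tau(t)),y(t))$ is not a Carath\'eodory ODE in the usual sense: for fixed $t$ the right-hand side is discontinuous as a function of the \emph{state} $\tau$ (at every $\tau$ with $W_1(\tau)=0$), whereas the Carath\'eodory theory requires continuity in the state variable and allows mere measurability only in the time parameter. Observing that $\{\tau : W_1(\tau)=0\}$ has Lebesgue measure zero does not by itself rescue the argument, because $\tau$ is the unknown: a priori a putative solution $\tau(\cdot)$ could spend a positive amount of $t$-time arbitrarily close to zeros of $W_1$, and neither the Peano/Carath\'eodory existence theorem nor any standard uniqueness criterion applies to right-hand sides that are discontinuous in the state. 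So the well-posedness you assert is true, but your justification does not establish it.

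The paper sidesteps this by never writing an ODE whose right-hand side is discontinuous in the state. It sets $\hat X=W_1$, solves the random ODE $d\hat y/ds=b_1(\hat y(s))/a_\pm(W_1(s),\hat y(s))$, which is \emph{Lipschitz in the state} $\hat y$ and only measurable in $s$ (so successive approximations apply pathwise), then defines the time change as the monotone \emph{integral} $t(s)=\int_0^s a_\pm^{-1}(W_1(r),\hat y(r))\,dr$ and inverts it. Equivalently, since $y(\cdot)$ in the Lemma is deterministic, one can solve the inverse ODE $dt/d\tau=1/a_\pm(W_1(\tau),y(t(\tau)))$, which is Lipschitz in the state $t$ (through the smooth $y(\cdot)$) and measurable in $\tau$ (through $W_1$), and then invert; your relation $\tau'(t)=a_\pm(W_1(\tau(t)),y(t))$ then emerges as a consequence rather than as the equation one solves. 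Your fallback (Engelbert--Schmidt weak existence, or a Stroock--Varadhan argument for the frozen-$y$ martingale problem lifted to a time-inhomogeneous equation) would also work. The second half of your proof is fine: for $f$ depending only on $x$, with $\beta\equiv 0$ and $\partial_{y_iy_j}f=\partial_{\zeta_i\zeta_j}f=0$, the gluing condition \eqref{eq:glueing_cond} reduces to $f_x(0+)=f_x(0-)$, the local-time term in the It\^o--Tanaka--Meyer expansion vanishes, and the martingale property follows.
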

\begin{proof}
  Such a result is not completely obvious because of the discontinuity of the coefficients. Consider instead the system
  \begin{align*}
    d\hat{X}(s) = dW(s)\\
    d\hat{y}(s) = \frac{b_1(\hat{y}(s))}{a_\pm(\hat{X}(s), \hat{y}(s))}ds.
  \end{align*}
  The pair $(\hat{X},\hat{y})$ is perfectly well-defined because the formula for $\hat{y}$ is a random ODE that we can consider pathwise in the integral sense. The function $b_1(y)/a_\pm(x,y)$ is discontinuous in $x$ , but uniformly Lipschitz continuous in $y$. Classical successive approximation arguments show that a unique solution exists.
  We then define the random time change
  \[t(s) = \int_0^s \frac{1}{a_\pm(\hat{X}(r), \hat{y}(r))} dr\]
  and denote its functional inverse by $s(t)$.
  Then we define
  \begin{align*}
    &X^0(t) = \hat{X}(s(t)),\\
    &y(t) = \hat{y}(s(t))
  \end{align*}
 which has the properties that we desire.
\end{proof}

The following two lemmas are a straightforward consequence of Lemma \ref{lem:itm}.

\begin{Lemma} \label{thm:existence-2}
  ~\\ \begin{enumerate}
\item Let $(X^\e,y,\zeta^\e)$ satisfy the assumptions of Theorem \ref{main_result2}. Let
  \begin{equation}
    \zeta^0(t) = \int_0^t e^{\int_s^t \partial_y b_1(y(r))dr}\sqrt{\left(\int_{-\infty}^\infty (\sigma\sigma^T)(x,y(s))dx\right)}dV^{X^0}(s).
  \end{equation}
 where $V^{X^0}=W_0(L^{X^0}(t,0))$\re. Then the triple $(X^0,y,\zeta^0)$ is a Markov process with generator $\bar{\mathcal{L}}$.

 \item Let $(X^\e,y,\zeta^\e)$ satisfy the assumptions of Theorem \ref{main_result1}. Let
  \begin{equation}
    \zeta^0(t) = \int_0^t e^{\int_s^t\partial_y b_1(y(r))dr} \sqrt{\left(\int_{-\infty}^\infty b(x,y(s)) dx \right)} b(X^0(s),y(s)) L^{X^0}(ds,0)
  \end{equation}
  where the above integral is the Riemann-Stieltjes integral with respect to the increasing function $t \mapsto L^{X^0}(t,0)$.
  Then the triple $(X^0,y,\zeta^0)$ is a Markov process with generator $\bar{\mathcal{L}}$.
\end{enumerate}
\end{Lemma}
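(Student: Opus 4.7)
The plan is to verify directly that, for any $f\in\mathcal{D}(\bar{\mathcal{L}})$, the process $f(Z(t))-\int_0^t\bar{\mathcal{L}}f(Z(s))ds$ is a martingale, where $Z(t)=(X^0(t),y(t),\zeta^0(t))$. This is what it means for $Z$ to solve the martingale problem associated with $\bar{\mathcal{L}}$; once this is established, the Markov property follows from Theorem \ref{EK-uniqueness} (after the Hille--Yosida step performed in the long-time case above, which goes through verbatim). The whole argument is an application of Lemma \ref{lem:itm} in which one checks that the prescribed form of $\zeta^0$ is exactly what makes the local-time term cancel against the gluing boundary condition \eqref{eq:glueing_cond}.

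First I would check that $\zeta^0$ is well-defined in both cases. In case (1), $V^{X^0}$ is a continuous martingale with $\langle V^{X^0}_i,V^{X^0}_j\rangle_t=\delta_{ij}L^{X^0}(t)$ and $\langle V^{X^0}_i,W\rangle_t\equiv 0$, while the integrand is controlled by $\hat{\sigma}\in L^1(\mathbb{R})$ from \eqref{eq:decay_cond}, so the Ito integral against $V^{X^0}$ exists. In case (2), $t\mapsto L^{X^0}(t,0)$ is continuous and non-decreasing and the integrand is bounded and continuous, so the Riemann--Stieltjes integral is well-defined. Existence of $V^{X^0}$ with the stated variation structure in case (1) can be obtained by the same time-change construction as in the long-time case: set $V^{X^0}(t)=W_0(L^{X^0}(t,0))$ with $W_0$ an independent Brownian motion.

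Next I would apply Lemma \ref{lem:itm} to the pair $(Z_1,Z_2)=(X^0,(y,\zeta^0))$, identifying $A(x,y,\zeta)=a_{\pm}(x,y)$ and $B(x,(y,\zeta))=(b_1(y),\,\partial_yb_1(y)\cdot\zeta)$, with $p_\pm\equiv 1/2$ and $(\alpha,\beta)$ as in the corresponding theorem. Writing out the formula, the absolutely continuous part produces exactly $\int_0^t\bar{\mathcal{L}}_\pm f(Z(s))ds$ on $\{X^0\neq 0\}$, while the sum of the remaining non-martingale terms takes the form
\begin{equation*}
\int_0^t\Bigl[\tfrac{1}{2}\partial_xf(0+,y,\zeta)-\tfrac{1}{2}\partial_xf(0-,y,\zeta)+\beta(y,\zeta)\cdot\partial_{(y,\zeta)}f+\tfrac{1}{2}\alpha_{ij}(y,\zeta)\partial^2_{(y,\zeta)_i(y,\zeta)_j}f\Bigr]\,dL^{X^0}(s),
\end{equation*}
evaluated at $(0,y(s),\zeta^0(s))$. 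But this bracket is precisely the left-hand side of the gluing condition \eqref{eq:glueing_cond} with the $(\alpha,\beta,p_\pm)$ prescribed by Theorem \ref{main_result2} or Theorem \ref{main_result1} respectively, and therefore vanishes identically for $f\in\mathcal{D}(\bar{\mathcal{L}})$. The remaining stochastic integrals are true martingales because $f\in C^1$ off the interface and $|f|,|\nabla f|$ are bounded. Hence $f(Z(t))-\int_0^t\bar{\mathcal{L}}f(Z(s))ds$ is a martingale as required.

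The principal obstacle I expect is the bookkeeping in matching the local-time integrand with the gluing condition: one has to be careful with the symmetric $x$-derivative (needed because we use the symmetric local time), with the extra $\zeta$-coordinate which contributes to the $\partial_{y,\zeta}$ derivatives, and with the off-diagonal structure of $\alpha$ (which is $0$ in the $y$--$y$ and $y$--$\zeta$ blocks in Theorem \ref{main_result2}, and entirely zero in Theorem \ref{main_result1}). A second, smaller point is verifying that Lemma \ref{lem:itm} applies even though $\zeta^0$ involves a time-changed stochastic integral with respect to $V^{X^0}$; this is handled by approximating $f$ by smooth functions on each half-space and passing to the limit, exploiting the decay of $\sigma\sigma^T/|\varphi|^2$ and $b_2/|\varphi|^2$ guaranteed by \eqref{eq:decay_cond}.
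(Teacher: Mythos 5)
Your proposal is correct and takes essentially the same route as the paper: apply Lemma~\ref{lem:itm} with $Z_1=X^0$, $Z_2=(y,\zeta^0)$, use the gluing condition \eqref{eq:glueing_cond} to cancel the local-time term, conclude that the triple solves the martingale problem, and then invoke the Hille--Yosida/Ethier--Kurtz argument exactly as in the proof of Theorem~\ref{main_result3}. The paper's own proof of this lemma is just the one-line remark that it is ``a straightforward consequence of Lemma~\ref{lem:itm},'' so your write-up is really a fleshed-out version of the intended argument; the only presentational caveat is that the Markov property of the triple is already guaranteed by the explicit time-change construction and is what licenses the Hille--Yosida step, rather than being a downstream consequence of Theorem~\ref{EK-uniqueness}, though both viewpoints are consistent.
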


\noindent\textit{Proof of Theorem \ref{main_result2} and Theorem \ref{main_result1}.}
Analogous to the proof of Theorem \ref{main_result3} in the previous subsection.
\qed

\section{Remarks and further directions}\label{rem:further}
There are several interesting further questions one can ask in relation with (\ref{eq:X_eq})-(\ref{eq:Y_eq}).
\begin{itemize}
\item In some sense, the original result of \cite{KK04} and the results in this paper carry out the first few steps of the program contained in (\cite{FW12}, Chapter 7) when the fast process is positively recurrent. In particular, a result similar to Theorem \ref{main_result2} was proved by Khasminskii (\cite{K66}) and one simlar to Theorem 5 was proven again by Borodin (\cite{B77}) sharpening a result of Khasminskii (\cite{K66b}) that in turn was inspired by the non-rigorous result of Stratonovich (\cite{S66}). The next step in this program would be to study large deviation estimates of $(X^{\e},Y^{\e})$ from their respective limit in \cite{KK04}.

\item Along the same lines, it is expected that one can generalize Theorem \ref{main_result3} to the case when there are several conserved quantities of that are conserved by the limiting motion in \cite{KK04}. Namely, if $H$ is such a quantity and $(X^0,Y^0)$ is the limit of $(X^{\e},Y^\e)$ in distribution then $H(Y^{\e}(t))$ will converge to the constant process process $H(y)$. uniformly on compact time intervals in probability. It is natural to conjecture that this process needs to be considered on time-scales of order $\e^{-2}$ in order to see any non-trivial behavior. The corresponding result for positive recurrent averaging was carried out by Borodin and Freidlin (\cite{BF95}). The result is expected to be a combination of external averaging due to the null recurrent fast process and internal averaging inside the over the level sets of $H$. In the case of Theorem \ref{main_result3}, $H(y)=y$ and the level sets are single points, so this internal averaging does not take place.

\item A different direction is to replace the fast motion with a more general null recurrent process or a process that is only neighborhood recurrent. For example, by considering a situation when the fast process is driven by a Bessel process of order $n\in (1,2]$, one can hope to study the asymptotic behavior of a three dimensional diffusive stochastic dynamical system the dynamics of which is perturbed in a narrow tube.

\item The results in this paper also imply certain results on partial differential equations through the well known representation formulas. For example, consider the Cauchy problem for $u^{\e}:\mathbb{R}^{1+d}\times \mathbb{R}_+\to\mathbb{R}$
\begin{align*}
\partial_t u^{\e}&=\frac{1}{2}\left|\varphi\left(\e^{-1}{x},y\right)\right|^2\partial_{xx}u^{\e}+\left[b_1(y)+b_2\left(\e^{-1}{x},y\right)\right]\partial_yu^{\e}+\\
&+\frac{1}{2}\sum_{i,j=1}^d(\sigma\sigma^T)_{ij}\left(\e^{-1}{x},y\right) \partial_{y_iy_j}u^{\e}+\sum_{i=1}^d\sum_{j=1}^k\sigma_{ij}(\e^{-1}x,y)\varphi_j(\e^{-1}x,y)\partial_{xy_{i}}u^{\e}
\end{align*}
with initial condition $u^{\e}(x,y,0)=f_{\e}(x,y)$. It is well known that
\[
u^{\e}(x,y,t)=\E_{(x,y)}f_\e(X^{\e}(t),Y^{\e}(t))
\]
is the unique solution of this Cauchy problem. In this setting, if $f_{\e}(x,y)= f(y)$   then we have by Lemma \ref{lem:Y_close_to_y} that $u^{\e}$ converges to $u^0(x,y,t)=f(y(t))$ uniformly on compact sets which solves the transport equation
\[
\partial_t u^0=b_1(y)\partial_yu^0,\qquad u^0(x,y,t)= f(y).
\]

On the other hand if $b_1\equiv 0$, $f_{\e}(x,y)=f(\e x,y)$, then Theorem \ref{main_result3} implies that $u^{\e}(x,y,t/\e^2)$ converges to $\hat{u}^0(x,y,t)=\E_{(x,y)}f(\bar{X}^0,\bar{Y}^0)$. Formally, this function solves
\begin{equation}\label{eq:deltaequation}
\partial_t\hat{u}^0=\frac{1}{2}a_{\pm}(x,y)\partial_{xx}\hat{u}^{0}+\delta(x)\left[\beta(y)\partial_y\hat{u}^0+\frac{1}{2}\alpha(y)\partial_{yy}\hat{u}^0\right],\qquad \hat{u}^0(0,x,y)=f(x,y),
\end{equation}
where $\delta(x)$ is the Dirac-delta distribution at x=0, even though the solution theory of such an equation is non-trivial. The case when there is only a generalized drift, i.e. $\alpha\equiv 0$, has been studied (see e.g. \cite{P79}, but see also \cite{BE12}) but to our knowledge, there have been no successful attempts to make sense directly of (\ref{eq:deltaequation}) in the general case. Another approach is to note that $Y(t)$ is a fractional diffusion. Indeed, in the case of e.g.  $a_{\pm}\equiv 1$, and $f(x,y)\equiv f(y)$, we have (see \cite{SZ97} or the more general \cite{BM01})
\begin{equation}\label{eq:fractional_eq}
\partial_t^{1/2}\hat{u}^0=\beta(y)\partial_y\hat{u}^0+\frac{1}{2}\alpha(y)\partial_{yy}\hat{u}^0,\qquad\hat{u}^0(0,y)=f(y),
\end{equation}
where $\partial^{\beta}_y$ is the Caputo-derivative, i.e.
\[
\partial_t^{\beta}f(t)=\frac{1}{\Gamma(1-\beta)}\int_0^{\infty}f'(t-r)r^{-\beta}dr
\]
 This latter non-local equation expresses the averaged (non-Markovian) dynamics of the $Y$-process (which is trivial when $a_{\pm}\equiv 1$ and it is expected that a similar equation can be written down for the general case. It would also be interesting to investiate the relation between (\ref{eq:deltaequation}) and (\ref{eq:fractional_eq}).

Similarly, the results in this paper can be also used to study other initial-boundary problems through the usual representation formulas.
\end{itemize}


\begin{appendix}
\section{Martingale problem - SDE equivalence in the general case}
 In this section we establish that there is a one to one correspondence between the solutions of the martingale problem associated to the operator $\bar{\mathcal{L}}$ and the weak solutions of the corresponding SDEs under very general circumstances. Even though this result is not needed for the specific results of this paper, we believe it is of independent interest and useful for future studies.  For convenience, we prove the case $p_+=p_-=1/2$ case rigorously and then outline the necessary changes for the case when $p_+$ and $p_-$ are constants but not necessarily equal.

\begin{Theorem}
Let $\bar{\mathcal{L}}$ be the operator on $C_0(\mathbb{R}^{1+d})$ described in Section 2 with $p_+=p_-=1/2$. Then the distribution of a process $Z(t)$ solves the martingale problem associated to $\bar{\mathcal{L}}$ if and only if  there is a Brownian motion $W$ and a martingale $V^{Z_1}$ such that $(Z(t), W, V^{Z_1})$ is a weak solution of (\ref{eq:SDE_X})-(\ref{eq:SDE_Y}).
\end{Theorem}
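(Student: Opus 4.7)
Given $(Z_1,Z_2,W,V^{Z_1})$ solving (\ref{eq:SDE_X})-(\ref{eq:SDE_Y}) and any $f\in\mathcal{D}(\bar{\mathcal{L}})$, I would apply the Ito-Tanaka-Meyer formula (Lemma \ref{lem:itm}) directly. With $p_+=p_-=\tfrac{1}{2}$ the bracket multiplying $dL^{Z_1}(s)$ in that formula is exactly the left-hand side of the gluing condition (\ref{eq:glueing_cond}) evaluated at $(0,Z_2(s))$, hence vanishes identically. The compatibility $\mathcal{L}_+f(0+,y)=\mathcal{L}_-f(0-,y)$ ensures $\bar{\mathcal{L}}f$ is a bounded continuous function, and the surviving stochastic integrals (against $dW$ and $dV^{Z_1}$) are true martingales by boundedness of the integrands. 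Therefore $f(Z(t))-\int_0^t\bar{\mathcal{L}}f(Z(s))ds$ is a martingale, which is exactly the martingale problem for $\bar{\mathcal{L}}$.

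\textbf{Converse direction, setup.} Assume the law of $Z$ solves the martingale problem. My plan proceeds in four steps. \emph{Step 1:} For $\psi\in C^2_b(\mathbb{R})$ with $\psi'(0)=\psi''(0)=0$, the function $f(x,y)=\psi(x)$ lies in $\mathcal{D}(\bar{\mathcal{L}})$, and the martingale problem gives that $\psi(Z_1)-\tfrac{1}{2}\int_0^\cdot A(Z_1,Z_2)\psi''(Z_1)ds$ is a martingale. Approximating $x\mapsto x_\pm$ by such $\psi$ (with cutoffs at infinity) extends this to a semimartingale decomposition of $Z_1$ itself, yielding a symmetric local time $L^{Z_1}$ at $0$ in the sense of Definition \ref{def_local_time}; analogous test functions establish the semimartingale property for $Z_2$. \emph{Step 2:} Plugging in $f\in C_c^\infty(H_+\cup H_-)$, for which the gluing condition is trivially satisfied, and invoking the Stroock-Varadhan identification inside each half-space, I identify the martingale part of $Z_1$ as having quadratic variation $A(Z_1,Z_2)ds$ on $\{Z_1\neq 0\}$, and deduce that the martingale part of $Z_2$ vanishes there (consistent with $\mathcal{L}_\pm$ having no $y$-diffusion).

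\textbf{Converse direction, boundary terms and noise.} \emph{Step 3:} By property 3 of Definition \ref{def_local_time}, the finite-variation parts of $Z$ and the quadratic variation of any residual boundary martingale of $Z_2$ are supported on $\{Z_1=0\}$ and hence absolutely continuous with respect to $dL^{Z_1}$. To identify their coefficients I would test the martingale problem against a rich family of $f\in\mathcal{D}(\bar{\mathcal{L}})$ whose gluing condition (\ref{eq:glueing_cond}) encodes $p_\pm$, $\beta$, and $\alpha$ separately (varying the jump of $\partial_xf$ at $0$, the linear-in-$y$ terms, and the quadratic-in-$y$ terms, all suitably cut off to lie in $C_0$). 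The equality $p_+-p_-=0$ yields no boundary drift in $Z_1$, while the remaining tests give $dZ_2=B(Z_1,Z_2)dt+\beta(Z_2)dL^{Z_1}+dN$ for some continuous martingale $N$ with $d\langle N_i,N_j\rangle=\alpha_{ij}(Z_2)dL^{Z_1}$. \emph{Step 4:} By Lévy's characterization, $W(t):=\int_0^tA(Z_1,Z_2)^{-1/2}dM_1(s)$ (where $M_1$ is the martingale part of $Z_1$) is a standard Brownian motion since $A\geq c_1$, and $V^{Z_1}:=\int_0^\cdot\sqrt{\alpha(Z_2)}^{-1}dN(s)$ satisfies $\langle V^{Z_1}_i,V^{Z_1}_j\rangle=\delta_{ij}L^{Z_1}$ (with independent auxiliary noise padding $\ker\sqrt{\alpha}$ in the degenerate case). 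The cross-variation $\langle W,V^{Z_1}_i\rangle\equiv 0$ follows since $N$ is constant off $\{Z_1=0\}$ while $\langle W\rangle_t=t$ is absolutely continuous.

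\textbf{Main obstacle.} The most delicate step is Step 3: the gluing condition (\ref{eq:glueing_cond}) couples the $x$-jump of $\partial_xf$ with the $y$-derivatives of $f$, so the test functions must be engineered carefully to disentangle $p_\pm$, $\beta$, and $\alpha$; moreover, the $y$-cutoffs required to keep $f$ in $C_0$ introduce error terms that must be controlled via a priori moment estimates derived from the martingale problem itself. A secondary annoyance is that when $\alpha$ is degenerate one must enlarge the probability space to construct $V^{Z_1}$ as a bona fide martingale with the required quadratic-covariation structure.
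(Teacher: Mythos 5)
Your forward direction is essentially the same as the paper's (both reduce to Lemma~\ref{lem:itm} and observe that the $dL^{Z_1}$-integrand is the gluing condition \eqref{eq:glueing_cond}, hence zero). Your converse, however, departs from the paper's argument, and it is precisely at your ``most delicate'' Step 3 that the proposal has a genuine gap.

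The gap is the unjustified assertion in Step 3 that a finite-variation process (or a quadratic variation) ``supported on $\{Z_1=0\}$'' is ``hence absolutely continuous with respect to $dL^{Z_1}$.'' Support on the zero set is not by itself enough to conclude absolute continuity with respect to the local time of $Z_1$, and the paper does not rely on such a claim. After that assertion, you propose to ``test the martingale problem against a rich family of $f\in\mathcal{D}(\bar{\mathcal{L}})$'' to read off $\beta$ and $\alpha$, but you do not say which functions to use or how to separate the contributions, and you yourself flag that the gluing condition entangles the $x$-jump of $\partial_x f$ with the $y$-derivatives. This is precisely the point where a concrete mechanism is needed and where your proposal stops short of being a proof.

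The paper's actual route is more explicit. It first observes that, for $p_+=p_-=1/2$, the function $f(x,y)=x$ is in the domain, so $X$ is a local martingale and Meyer--Tanaka gives $|X(t)|=|x|+L^X(t,0)+\textnormal{local martingale}$. It then chooses the \emph{specific} test function $f(x,y)=y_k-|x|\beta_k(y)$, which satisfies the gluing condition by construction, and establishes the key product identity for $|X(t)|\beta_k(Y(t))-|x|\beta_k(y)$ via a randomly refined partition argument (refining each interval at the last zero of $X$ in it so that each subinterval contributes either only to the $dL^X$ term or only to the $ds$ term). This yields directly that $M(t)=Y(t)-y-\int_0^t B\,ds-\int_0^t\beta\,dL^X$ is a local martingale, and then compares the It\^o formula for a general $f\in C^3$ against the martingale problem applied to the corrected function $g(x,y)=f(x,y)-\sum_k|x|\beta_k(y)f_{y_k}(0,y)-\tfrac{1}{2}\sum_{i,j}|x|\alpha_{ij}(y)f_{y_iy_j}(0,y)$ (again engineered to satisfy \eqref{eq:glueing_cond}) to read off $\langle X,M_j\rangle\equiv 0$, $\langle X\rangle_t=\int_0^t A\,ds$, and $\langle M_i,M_j\rangle_t=\int_0^t\alpha_{ij}\,dL^X$. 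Your Step 4 (L\'evy characterization and the construction of $W$ and $V^{Z_1}$) then agrees with the paper's conclusion, but the identification of the boundary drift $\beta$ and the boundary covariation $\alpha$ is the heart of the proof, and your proposal does not supply it.

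A minor remark on Step 1: for $f(x,y)=\psi(x)$ to lie in $\mathcal{D}(\bar{\mathcal{L}})$ you do need $\psi''(0)=0$ (since $a_+\neq a_-$ in general forces $\mathcal{L}_+f(0+)=\mathcal{L}_-f(0-)$ to fail otherwise), but $\psi'(0)=0$ is not required; with $p_+=p_-=1/2$ the gluing condition is automatic for any $C^1$ function of $x$ alone. In fact the simplest choice is to use $f(x,y)=x$ directly, as the paper does.
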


\begin{proof}
Using Lemma \ref{lem:itm}, it is straightforward to see that for every weak solution, the distribution of $Z$ solves the martingale problem associated to $\bar{\mathcal{L}}$.

Now we prove the opposite direction. Assume that $(X,Y)$ is a Martingale solution to the operator $\bar{\mathcal{L}}$ given by
 \begin{equation*}
   \bar{\mathcal{L}} f(x,y) = \frac{1}{2} A(x,y) f_{xx}(x,y) + \sum_{k=1}^d B_k(x,y) f_{y_k}(x,y)
 \end{equation*}
 whose domain consists of sufficiently regular functions subjected to the boundary conditions
 \begin{equation}\label{eq:appendix-bdry-conds}
   \frac{1}{2} f_x(0+,y) - \frac{1}{2} f_x(0-,y) + \sum_{k=1}^d \beta_k(y) f_{y_i}(0,y) +\frac{1}{2}\sum_{k,j=1}^d \alpha_{kj}(y) f_{y_ky_j}(y) = 0.
 \end{equation}
Let $\hat{\mathcal{D}}(\bar{\mathcal{L}})$ be defined the same as $\mathcal{D}(\bar{\mathcal{L}})$ but without requiring its members to decay at infinity. This means that it consists of those continuous functions twice continuously differentiable in $x$ and $y$ with the $x$ derivative possibly being discontinuous at $x=0$. Since
\[
f(X(t),Y(t))-f(X(0))-\int_0^t\bar{\mathcal{L}}f(X(s),Y(s))ds,
\]
is a continuous martingale for every $f\in\mathcal{D}(\bar{\mathcal{L}})$, it is not hard to show that it is a local martingale for all $f\in\hat{\mathcal{D}}(\mathcal{L})$.

First, we claim that the process $Y(t) - \int_0^t B(X(s),Y(s))ds$ does not move unless $X(t)=0$. To see this, let $|x|>\delta>0$ and $\tau = \inf\{t>0: |X(t)|\leq \delta\}$. Then by the martingale problem with a smooth function satisfying \
\[
f(x,y)=\left\{\begin{array}{cc}
y_k& |x|>\delta\\
0&|x|<\delta/2
\end{array}\right. ,
\]
we have that
\[Y_k(\tau \wedge t) - \int_0^{\tau \wedge t} B_k(X(s),Y(s))ds\]
is a local martingale. Applying the same procedure with $y_k^2$ shows that this local martingale has quadratic variation $0$ and therefore does not move. The boundary conditions are not relevant for this stopped process because we stop the process before it reaches the boundary.
To study the motion at the boundary, we note that for any $k\in\{1,\dots,d\}$, the function
\[f(x,y) = y_k - |x| \beta_k(y)\]
satisfies the boundary conditions \eqref{eq:appendix-bdry-conds}. Because $(X,Y)$ solves the martingale problem,
\begin{align} \label{eq:Y-martingale}
  Y_k(t) - |X(t)| \beta_k(Y(t)) &- y_k - |x| \beta_k(y)- \int_0^t B_k(X(s),Y(s)) ds -\\
\nonumber &- \int_0^t \sum_{j=1}^d |X(s)| \frac{\partial \beta_k}{\partial y_j}(Y(s))B_j(X(s),Y(s))ds
\end{align}
is a local martingale. Note that  because $f(x,y)=x$ satisfies the boundary conditions, we know that $X(t)$ is also a local martingale.  Therefore by the Meyer-Tanaka formula,
\begin{equation*}
 |X(t)| = |x| + L^X(t,0) + \textnormal{ local martingale}.
\end{equation*}
Next, we observe that for any partition $0=t_0<t_1<..<t_{N+1} = t$,
\begin{align*}
  |X(t)|\beta_k(Y(t)) &- |x|\beta_k(y) =\\
&=  \sum_{i=0}^N \beta_k(Y(t_{i})) (|X(t_{i+1})| - |X(t_i)|) + \sum_{i=0}^N |X(t_{i+1})| \left(\beta_k(Y(t_{i+1})) - \beta_k(Y(t_i)) \right).
\end{align*}
As the partition gets finer,
\[\lim_{\|P\| \to 0} \sum_{i=0}^N \beta_k(Y(t_i)) (|X(t_{i+1})| - |X(t_i)|) = \int_0^t \beta_k(Y(s)) L^X(ds,0) + \textnormal{ local martingale}. \]
in $L^2(\Omega)$. As for the second sum,
\begin{align*}
 &\sum_{i=0}^N |X(t_{i+1})| \left(\beta_k(Y(t_{i+1})) - \beta_k(Y(t_i)) \right)\\
  &= \sum_{i=0}^N |X(t_{i+1})| \left( \beta_k\left( \tilde{Y}(t_{i+1}) + \int_0^{t_{i+1}} B(X(s),Y(s))ds \right) - \beta_k \left(\tilde{Y}(t_i) + \int_0^{t_i} B(X(s),Y(s)) ds \right) \right)
\end{align*}
where
\[\tilde{Y}(t) = Y(t) - \int_0^t B(Y(s)) ds,\]
which does not move unless $X(t) =0$ with probability one. Given any partition, $P$ of $[0,t]$, we can refine the partition (randomly) in the following manner. For any $I=(t_{i},t_{i+1}) \in P$,
\begin{enumerate}
  \item If there exists $s \in I$ such that $X(s)=0$ but  $X(t_{i+1})\neq 0$, we split the interval, $I$, at the random time
  \[\tau = \sup\{s \in (t_i, t_{i+1}): X(s)=0\}\]
  \item Otherwise, we do not refine $I$. 
\end{enumerate}
Call the randomly refined partition $\tilde{P}$. This partition has the property that if $[t_i,t_{i+1}] \in \tilde{P}$,
then either $X(t_{i+1})=0$ or $\tilde{Y}(t_{i+1}) = \tilde{Y}(t_i)$. Switching to this random family of partitions,
\begin{align*}
&\lim_{\|\tilde{P}\| \to 0} \sum_{k=0}^N |X(t_{i+1})| \left( \beta_k\left( \tilde{Y}(t_i+1) + \int_0^{t_{i+1}} B(Y(s))ds \right) - \beta_k \left(\tilde{Y}(t_i) + \int_0^{t_i} B(Y(s)) ds \right) \right)\\
&= \int_0^t \sum_{j=1}^d |X(s)| \frac{\partial \beta_k}{\partial y_j}(Y(s)) B_j(Y(s)) ds
\end{align*}
in $L^2(\Omega)$. In this way, we have proven that
\begin{equation} \label{eq:|X|beta(Y)}
  |X(t)|\beta_k(Y(t)) - |x|\beta_k(y) \stackrel{a.s}{=} \int_0^t \beta_k(Y(s))L^X(ds,0) + \int_0^t \sum_{j=1}^d |X(s)| \frac{\partial \beta_k}{\partial y_j}(Y(s)) B_j(X(s),Y(s)) ds
\end{equation}
and by \eqref{eq:Y-martingale},
\begin{equation*}
  M(t) := Y(t) - y - \int_0^t B(X(s),Y(s)) ds - \int_0^t \beta(Y(s))L^X(ds,0)
\end{equation*}
is a local martingale.
 By the Ito formula,
for any $f \in C^2(\reals^{d+1})$, 
\begin{align} \label{eq:appendix-ito-fmla}
  &f(X(t),Y(t)) = f(x,y) + \sum_{k=1}^d \int_0^t f_{y_k}(X(s),Y(s)) B_k(X(s),Y(s)) ds \nonumber\\
  &+  \sum_{k=1}^d \int_0^t f_{y_k}(X(s),Y(s)) \beta_k(Y(s)) L^X(ds,0) + \frac{1}{2} \int_0^t f_{xx}(X(s),Y(s)) d\left< X\right>_s\nonumber \\
   &+ \frac{1}{2} \sum_{i,j=1}^d \int_0^t f_{y_iy_j}(X(s),Y(s)) d\left< M_i,M_j\right>_s +  \sum_{j=1}^d \int_0^t f_{xy}(X(s),Y(s)) d \left< X, M_j \right>_s\nonumber \\
   &+ \textnormal{local martingale}.
\end{align}
We compare this to the martingale problem.
Let  $f \in C^3(\mathbb{R}^{d+1})$ and define
\[g(x,y) = f(x,y) - \sum_{k=1}^d |x|\beta_k(y) f_{y_k}(0,y) - \frac{1}{2} \sum_{i,j=1}^d |x| \alpha_{ij}(y) f_{y_i y_j}(0,y)\]
 This $g$ satisfies the boundary conditions \eqref{eq:appendix-bdry-conds} and therefore
\[g(X(t), Y(t)) - \int_0^t \mathcal{L}g(X(s),Y(s))ds
\]
is a martingale.
That is,
\begin{align} \label{eq:appendix-martingale-prob}
  &f(X(t),Y(t)) = f(x,y) + \sum_{k=1}^d \left( |X(t)| \beta_k(Y(t)) f_{y_k}(0, Y(t)) -|x|\beta_k(y) f_{y_k}(0,y) \right)\nonumber \\
  &- \sum_{k,j=1}^d \int_0^t |X(s)| \frac{\partial}{\partial y_j} \left(\beta_k(Y(s)) f_{y_k}(0,Y(s)) \right) B_j(X(s),Y(s)) ds \nonumber \\
  &+ \sum_{i,j=1}^d \left( |X(t)| \alpha_{ij}(Y(t)) f_{y_i y_j}(0,Y(t)) -  |x|\alpha_{ij}(y) f_{y_i y_j}(0,y) \right) \nonumber \\
  & - \sum_{i,j,k=1}^d \int_0^t |X(s)| \frac{\partial}{\partial y_k} \left(\alpha_{ij}(Y(s)) f_{y_i y_j}(X(s),Y(s)) \right) B_k(X(s),Y(s)) ds   \\
  &+ \sum_{k=1}^d \int_0^t f_{y_k}(X(s),Y(s)) B_k(X(s),Y(s))ds  + \frac{1}{2} \sum_{i,j=1}^d \int_0^t A(X(s),Y(s)) f_{xx}(X(s),Y(s)) ds  \nonumber  \\
  &+ \textnormal{local martingale}.
\end{align}
By \eqref{eq:|X|beta(Y)}, it follows that
\begin{align*}
 &|X(t)| \beta_k(Y(t)) f_{y_k}(0, Y(t)) -|x|\beta_k(y) f_{y_k}(0,y)\\
 &  - \sum_{j=1}^d \int_0^t |X(s)| \frac{\partial}{\partial y_j} \left(\beta_k(Y(s)) f_{y_k}(0,Y(s)) \right) B_j(X(s),Y(s)) ds \\
 & = \int_0^t \beta_k(Y(s)) f_{y_k}(X(s),Y(s)) L^X(ds,0)
\end{align*}
and
\begin{align*}
  &|X(t)| \alpha_{ij}(Y(t)) f_{y_i y_j}(0,Y(t)) - |x| \alpha_{ij}(y) f_{y_i y_j}(0,y) \\
  &- \sum_{k=1}^d \int_0^t |X(s)| \frac{\partial}{\partial y_k} \left( \alpha_{ij}(Y(s)) f_{y_i y_j}(X(s),Y(s)) \right) B_k(X(s),Y(s)) ds       \\
  &= \int_0^t \alpha_{ij}(X(s),Y(s)) f_{y_i y_j}(X(s),Y(s)) L^X(ds,0).
\end{align*}
Then by comparing \eqref{eq:appendix-ito-fmla} and \eqref{eq:appendix-martingale-prob} we can conclude that

\begin{align*}
  &\left<X,M_j \right>_t = 0,\\
  &\left<X \right>_t = \int_0^t A(X(s),Y(s))ds, \\
  &\left< M_i, M_j \right>_t = \int_0^t \alpha_{ij}(Y(s))L^X(ds,0).
\end{align*}
Finally, we define
\begin{align*}
  &W(t) := \int_0^t A^{-1/2}(X(s),Y(s)) dX(s),
  &V^X(t) := \int_0^t \alpha^{-1/2}(X(s),Y(s)) dM(s).
\end{align*}
By Levy's theorem, $W(t)$ is a one-dimensional Brownian-motion. Similarly, if
\[L^{-1}(s) = \inf\{ t>0: L^X(t,0)=s\},\]
then by Levy's theorem
\begin{equation*}
  W_0(s) := V^X(L^{-1}(s))
\end{equation*}
is a $d$-dimensional Wiener process,
and $V^X(t) = W_0(L^X(t,0))$.
Finally, we can conclude that  $(X(t),Y(t))$ is the weak solution of
\begin{equation}
  \begin{cases}
    dX(t) = \sqrt{A(X(t),Y(t))} dW(t),\\
    dY(t) = B(X(t),Y(t))dt + \beta(Y(t))L^X(dt,0) + \sqrt{\alpha(Y(t))}dV^X(t).
  \end{cases}
\end{equation}

\end{proof}

\begin{Remark}
In the case when $p_+(y)$ and $p_-(y)$ are not necessarily equal, one direction is still given by Lemma \ref{lem:itm}. The main difference in proving the other direction is that now $X(t)$ is not a local martingale. Rather, by applying the martingale problem to the function $f(x,y)=x-(p_+(y)-p_-(y))|x|$ (which requires sufficient regularity from $p_{\pm}$), we see that
\begin{equation}\label{eq:Xeqnonsym}
X(t)-(p_+(Y(t))-p_-(Y(t))|X(t)|
\end{equation}
is a local martingale. The difficulty lies in the fact that we do not a priori know that $X(t)$ is a semimartingale and therefore we cannot immediately apply the Tanaka-Meyer formula. However, let $\tau_0^{\delta}=0$, $\sigma_0^{\delta}=\inf\{t\geq 0| X(t)=0\}$ and recursively define
\[
\tau_i^{\delta}=\inf\{t>\sigma_{i-1}^{\delta}| |X(t)|=\delta\},\qquad\sigma_{i}^{\delta}=\inf\{t>\tau_i^{\delta}|X(t)=0\}
\]
and we can write
\[
X(t)-X(0)=\sum_{i=0}^{\infty}\left[X(t\wedge\sigma_i^{\delta})-X(t\wedge\tau_i^{\delta})\right]+\sum_{i=1}^{\infty}\left[X(t\wedge\tau_i^{\delta})-X(t\wedge\sigma_{i-1}^{\delta})\right].
\]
It can be shown similarly as in Lemma 2.2 in \cite{FS99} that as $\delta\downarrow 0$, the first sum converges to a local martingale while the second sum converges to a monotone process that only changes when $X(t)=0$. More precisely, this second term converges to $\int_0^t(p_+(Y(s))-p_-(Y(s)))dL^X(s)$. This shows that $X$ is a semimartingale and (\ref{eq:Xeqnonsym}) implies that $X(t)-\int_0^t(p_+(Y(s))-p_-(Y(s)))dL^X(s)$ is a local martingale. The rest of the proof goes through with minor modifications.
\end{Remark}
\end{appendix}

\bibliographystyle{amsplain}
\bibliography{citations}

\bibliographystyle{amsplain}

\end{document}